\numberwithin{equation}{section}
\theoremstyle{plain}
\newtheorem{maintheorem}{Theorem}
\newtheorem{theorem}{Theorem}[section]
\newtheorem{proposition}[theorem]{Proposition}
\newtheorem{corollary}[theorem]{Corollary}
\newtheorem{lemma}[theorem]{Lemma}
\newtheorem{claim}[theorem]{Claim}
\theoremstyle{definition}
\newtheorem{remark}[theorem]{Remark}
\newtheorem{definition}{Definition}
\newtheorem{example}{Example}
\newcommand{\RR}{{\mathbb R}}
\newcommand{\diam}{\operatorname{diam}}
\renewcommand{\epsilon}{\varepsilon}
\newcommand{\AP}{\operatorname{AP}}
\newcommand{\intt}{\operatorname{int}}
\newcommand{\Hom}{\operatorname{Hom}}
\newcommand{\supp}{\operatorname{supp}}
\begin{document}

\title[Completely irregular set]{On the completely irregular set of maps with the shadowing property}

%\title[.]{.}

\author{M. Carvalho}
\author{V. Coelho}
\author{L. Salgado}
%\author{P. Varandas}

\address{Maria Carvalho, CMUP \& Departamento de Matem\'atica, Faculdade de Ci\^encias da Universidade do Porto \\
Rua do Campo Alegre s/n, 4169-007 Porto, Portugal.}
\email{mpcarval@fc.up.pt}

\address{Vin\'icius Coelho, Universidade Federal do Oeste da Bahia, Centro Multidisciplinar de Bom Jesus da Lapa\\
Av. Manoel Novais, 1064, Centro, 47600-000 - Bom Jesus da Lapa-BA-Brazil}
\email{viniciuscs@ufob.edu.br}

\address{Luciana Salgado, Universidade Federal do Rio de Janeiro, Instituto de
   Matem\'atica\\
   Avenida Athos da Silveira Ramos 149 Cidade Universit\'aria, P.O. Box 68530,
   21941-909 Rio de Janeiro-RJ-Brazil }
 \email{lsalgado@im.ufjr.br, lucianasalgado@ufrj.br}

%
%\address{Paulo Varandas, CMUP, Universidade do Porto \& Departamento de Matem\'atica, Universidade Federal da Bahia\\
%Av. Ademar de Barros s/n, 40170-110 Salvador, Brazil}
%\email{paulo.varandas@ufba.br}

\subjclass[2010]{Primary: 37A30, 37C10, 37C40, 37D20.}
%37A30 % Ergodic theorems
%37C10 % Vector fields, flows, ordinary differential equations
%37C40 % Smooth ergodic theory, invariant measures
%37C27 % Periodic orbits of vector fields and flows
%37D20 %Uniformly hyperbolic systems (expanding, Anosov, Axiom A, etc.)

\keywords{Historic behavior; Measure center; Shadowing; Specification; Transitivity}

%%%%%%%%%%%%%%%%%%%%%%%%%%%%%%%%%%%%%%%%%%%%%%%%%%%%%%%%%%%%%%%%%%%%%%%%%%%%%%%%%%%%%%%%%%%%%

\begin{abstract}
We prove that the completely irregular set is Baire generic for every non-uniquely ergodic transitive continuous map which satisfies the shadowing property and acts on a compact metric space without isolated points. We also show that, under the previous assumptions, the orbit of any completely irregular point is dense.
% study the relation between the measure center and the non-wandering set of a continuous map over a compact metric space with no isolated points.
Afterwards, we analyze the connection between transitivity and the shadowing property, draw a few consequences of their joint action within the family of expansive homeomorphisms, and discuss several examples to test the scope of our results.
%In particular, we apply this theory to expansive homeomorphisms. Finally, we introduce the concept of $\Phi$-chaotic map, with respect to sequence of continuous bounded maps, and its relation with the previous concept of $\Phi$-sensitive map and discuss about Anosov flows under this point of view. We also study the orbits of completely irregular points.
\end{abstract}

%%%%%%%%%%%%%%%%%%%%%%%%%%%%%%%%%%%%%%%%%%%%%%%%%%%%%%%%%%%%%%%%%%%%%%%%%%%%%%%%%%%%%%%%%%%%%

\date{\today}
\maketitle
\tiny
\tableofcontents
\normalsize

%%%%%%%%%%%%%%%%%%%%%%%%%%%%%%%%%%%%%%%%%%%%%%%%%%%%%%%%%%%%%%%%%%%%%%%%%%%%%%%%%%%%%%%%%%%%%

\section{Introduction} \label{sec:statement-result1}

Let $(X,d)$ be a compact metric space and $T\colon X \to X$ a continuous map. Given $x \in X$, denote by $\mathcal{O}_T(x)$ the orbit by $T$ of $x \in X$, and by $\overline{\mathcal{O}_T(x)}$ its closure in $X$. In what follows,  $C^0(X, \mathbb{R})$ stands for the set of real valued continuous maps on $X$ endowed with the supremum norm $\|\cdot\|_\infty$; $\mathcal{P}(X)$ denotes the space of Borel probability measures in $X$ with the weak$^*$-topology, $\mathcal{P}_{T}(X)$ is its subspace of $T$-invariant elements and $\mathcal{P}_{T}^{erg}(X)$ is the subset of ergodic measures of $\mathcal{P}_{T}(X)$.

Given $x \in X$, denote by $\delta_{x}$ the Dirac measure supported at $x$~and let $V_T(x) \subset\mathcal{P}_T(X)$ be the set of accumulation points
%, in the weak$^*$ topology,
of the sequence of empirical measures $\big(\frac{1}{n}\,\sum_{j=0}^{n-1}\,\delta_{T^{j}(x)}\big)_{n \, \in \, \mathbb{N}}$.

A point $x \in X$ is called \emph{irregular} if there is a continuous function $\varphi\colon X \to \mathbb{R}$ such that the sequence $\Big(\frac{1}{n} \sum_{j=0}^{n-1} \varphi (T^{j}(x)) \Big)_{n \,\in \,\mathbb{N}}$ is not convergent. Given $\varphi \in C^0(X,\mathbb R)$, the set of \emph{$(T,\varphi)$-irregular points}, or points with historic behavior with respect to $\varphi$, is defined by
$${\mathcal I}(T,\varphi) \,=\,  \Big\{x \in X \colon \Big(\frac{1}{n} \sum_{j=0}^{n-1}  \varphi (T^{j}(x)) \Big)_{n \,\in \,\mathbb{N}} \text{ does not converge}\Big\}.$$
If we denote
\begin{align*}
& \mathcal{H}(X,T)\, = \,\big\{\varphi \in C^0(X,\mathbb{R})\colon \,{\mathcal I}(T,\varphi) \neq \emptyset\big\}
\\
& \mathcal{R}(X,T) \, = \, \big\{\varphi \in C^0(X,\mathbb{R})\colon \,{\mathcal I}(T,\varphi)\text{ is Baire generic in } X\big\}
\end{align*}
then the set of \emph{completely irregular points} of $T$ is precisely the intersection
$$\mathcal{C}\mathcal{I}(X,T) \,= \bigcap_{\varphi \,\in \,\mathcal{H}(X,T)} \,{\mathcal I}(T,\varphi).$$
In \cite[Corollary 2.1]{CCSV21}\label{maincorollary_compact}, in a joint work with P. Varandas, the authors established a necessary and sufficient condition for the set $\mathcal{C}\mathcal{I}(X,T)$ to be Baire generic whenever $T$ has a dense orbit. See Section~\ref{se:chaotic} for more details, where we reformulate this condition into a simpler statement.

Following \cite{HLT21v2,KOR2016}, we define the \emph{measure center} of $T$ by
$$\mathcal{M}(X,T) \,=\, \overline{\bigcup_{\mu \,\in\,  \mathcal{P}_T(X)} \supp \mu}$$
where $\supp \mu$ is the support of the probability measure $\mu$. Tian showed (cf. \cite[Theorem 4.3 and Theorem 4.5]{Tian}) that, if $T$ is not uniquely ergodic and satisfies the almost specification property, then
$$\mathcal{M}(X,T) \, \subseteq \bigcap_{x \,\in\, \mathcal{C}\mathcal{I}(X,T)} \omega(x)$$
where $\omega(x)$ stands for the set of accumulation points of $\mathcal{O}_T(x)$; if, in addition, $\mathcal{M}(X,T) = X$, then the set of completely irregular points is Baire generic in $X$ and every point in $\mathcal{C}\mathcal{I}(X,T)$ has a dense orbit by $T$. Our first result is a counterpart of \cite{DTY15,Tian}.
%Let $(X,d)$ be a compact metric space.
%Given $x \in X$,
%denote by $\delta_{x}$ the Dirac measure supported at $x$ and by
%let $V_T(x) \subset\mathcal{P}_T(X)$ be the set of accumulation points, in the weak$^*$ topology, of the sequence of empirical measures $\big(\frac{1}{n}\,\sum_{j=0}^{n-1}\,\delta_{T^{j}x}\big)_{n \, \in \, \mathbb{N}}$.

\begin{maintheorem}\label{theorem-A}
Let $(X,d)$ be a compact metric space without isolated points and $T\colon X \to X$ be a continuous map.
\begin{itemize}
\item[(a)] If $\mathcal{C}\mathcal{I}(X,T) \neq \emptyset$, then
$$\bigcap_{x \,\in\, \mathcal{C}\mathcal{I}(X,T)} \,\omega(x) \, \neq \, \emptyset.$$
\item[(b)] If $x,y \in \mathcal{C}\mathcal{I}(X,T)$, then
$$\#\, \big(V_{T}(y) \setminus \big\{\mu \in V_{T}(y)\colon \, \supp(\mu) \cap \omega(x) \neq \emptyset \big\}\big)\, \leq \,1.$$
\item[(c)] If $x,y \in \mathcal{C}\mathcal{I}(X,T)$ and $\mu, \nu \in V_{T}(y)$, then
$$\supp(\mu) \cap  \big(X \setminus \omega(x)\big) \, = \,\supp(\nu) \cap  \big(X \setminus \omega(x)\big).$$
\item[(d)] For every $x \in \mathcal{C}\mathcal{I}(X,T)$ there exists a $T$-invariant set $\mathbb{B}_{x} \subseteq \overline{\mathcal{O}_T(x)}$ such that $\mathbb{B}_{x}$ is Baire generic in $\overline{\mathcal{O}_T(x)}$ and $x \in \mathbb{B}_{x} \subseteq \mathcal{C}\mathcal{I}(X,T)$.
\end{itemize}
\end{maintheorem}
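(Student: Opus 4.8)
The plan is to derive all four assertions from a single elementary fact. \textbf{Observation.} If $x\in\mathcal{CI}(X,T)$ and $\psi\in C^0(X,\mathbb{R})$ vanishes on $\omega(x)$, then $\psi\notin\mathcal{H}(X,T)$; equivalently, every point of $X$ is $(T,\psi)$-regular, i.e. $\int\psi\,d\mu=\int\psi\,d\nu$ for every $z\in X$ and all $\mu,\nu\in V_T(z)$. Indeed, each $\alpha\in V_T(x)$ satisfies $\supp\alpha\subseteq\omega(x)$, so $\int\psi\,d\alpha=0$; thus $x\notin\mathcal{I}(T,\psi)$, and since $\mathcal{CI}(X,T)\subseteq\mathcal{I}(T,\varphi)$ for every $\varphi\in\mathcal{H}(X,T)$ by definition, this is impossible unless $\psi\notin\mathcal{H}(X,T)$, that is $\mathcal{I}(T,\psi)=\emptyset$. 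Throughout I use that $V_T(z)$ is a nonempty set of $T$-invariant measures supported in $\omega(z)$, that $V_T(Tz)=V_T(z)$, and that $z$ is $(T,\varphi)$-regular exactly when $\varphi$ has a common integral against all measures of $V_T(z)$. I also assume $\mathcal{H}(X,T)\neq\emptyset$; otherwise $\mathcal{CI}(X,T)=\emptyset$ and (a)--(d) are vacuous.

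For (a), fix $x_0\in\mathcal{CI}(X,T)$; since $x_0$ is $(T,\varphi)$-irregular for some $\varphi\in\mathcal{H}(X,T)$, there are distinct $\mu,\nu\in V_T(x_0)$, so the signed measure $\mu-\nu$ is nonzero and $\supp(\mu-\nu)\neq\emptyset$. I claim $\supp(\mu-\nu)\subseteq\omega(x)$ for \emph{every} $x\in\mathcal{CI}(X,T)$, which yields $\emptyset\neq\supp(\mu-\nu)\subseteq\bigcap_{x\in\mathcal{CI}(X,T)}\omega(x)$. If not, pick $p\in\supp(\mu-\nu)\setminus\omega(x)$ and an open $U$ with $p\in U\subseteq\overline{U}\subseteq X\setminus\omega(x)$; since $p\in\supp(\mu-\nu)$ there is $\psi\in C^0(X,\mathbb{R})$ with $\supp\psi\subseteq\overline{U}$ and $\int\psi\,d(\mu-\nu)\neq 0$. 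But $\psi$ vanishes on $\omega(x)$, so the Observation (applied with $z=x_0$) gives $\int\psi\,d\mu=\int\psi\,d\nu$, a contradiction.

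For (c) and (b), let $x,y\in\mathcal{CI}(X,T)$. If (c) fails, after relabelling there are $\mu,\nu\in V_T(y)$ and $p\in\supp\mu$ with $p\notin\omega(x)$ and $p\notin\supp\nu$; choosing $\psi\in C^0(X,[0,1])$ with $\psi(p)=1$ and $\supp\psi$ contained in a neighbourhood of $p$ disjoint from $\omega(x)\cup\supp\nu$ gives $\int\psi\,d\mu>0=\int\psi\,d\nu$, contradicting the Observation (with $z=y$). For (b), suppose $\mu_1\neq\mu_2$ in $V_T(y)$ both have support disjoint from $\omega(x)$; by Urysohn pick $h\in C^0(X,[0,1])$ equal to $1$ on $\supp\mu_1\cup\supp\mu_2$ and to $0$ on $\omega(x)$, and $g\in C^0(X,\mathbb{R})$ with $\int g\,d\mu_1\neq\int g\,d\mu_2$; then $\psi=gh$ vanishes on $\omega(x)$ while $\int\psi\,d\mu_i=\int g\,d\mu_i$, again contradicting the Observation. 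Hence at most one element of $V_T(y)$ has support disjoint from $\omega(x)$, which is (b).

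For (d), put $Y=\overline{\mathcal{O}_T(x)}$ and $S=T|_Y$: then $Y$ is forward $T$-invariant, $S$ has the dense orbit $\mathcal{O}_T(x)$, and $x\in Y$. The crucial identity is $\mathcal{H}(Y,S)=\{\varphi|_Y:\varphi\in\mathcal{H}(X,T)\}$. The inclusion $\supseteq$ uses $x\in\mathcal{CI}(X,T)$: any $\varphi\in\mathcal{H}(X,T)$ has $x\in\mathcal{I}(T,\varphi)$, and the $T$-orbit of $x$ lies in $Y$, so $x\in\mathcal{I}(S,\varphi|_Y)$; the inclusion $\subseteq$ follows by extending any $\psi\in\mathcal{H}(Y,S)$ to $\varphi\in C^0(X,\mathbb{R})$ via Tietze, since an $(S,\psi)$-irregular point of $Y$ is then $(T,\varphi)$-irregular. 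As the $T$- and $S$-orbits coincide on $Y$, this identity gives, for $y\in Y$, that $y\in\mathcal{CI}(X,T)$ iff $y$ is $(S,\psi)$-irregular for all $\psi\in\mathcal{H}(Y,S)$, i.e. iff $y\in\mathcal{CI}(Y,S)$; hence $\mathcal{CI}(X,T)\cap Y=\mathcal{CI}(Y,S)$. This set contains $x$, is $S$-invariant (because $V_S(\cdot)$ and regularity are $S$-invariant), and — since $(Y,S)$ has a dense orbit and its completely irregular set is nonempty — is Baire generic in $Y$ by \cite[Corollary 2.1]{CCSV21} in the reformulated form of Section~\ref{se:chaotic}; one takes $\mathbb{B}_x:=\mathcal{CI}(X,T)\cap\overline{\mathcal{O}_T(x)}$. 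The step I expect to be most delicate is this last one: one must read off from \cite[Corollary 2.1]{CCSV21} that, for a map with a dense orbit, a nonempty completely irregular set is automatically Baire generic, and verify that passing to the (possibly imperfect) subsystem $(Y,S)$ keeps its hypotheses intact — which is precisely the purpose of the reformulation in Section~\ref{se:chaotic}. A minor bookkeeping point is the degenerate case $\mathcal{H}(X,T)=\emptyset$, handled by the convention that then $\mathcal{CI}(X,T)=\emptyset$.
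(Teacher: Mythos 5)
Your proposal is correct. Parts (b) and (c) coincide with the paper's argument in all but packaging: the paper also manufactures a potential $\psi=\varphi\cdot\xi$ with a cutoff $\xi$ vanishing near $\omega(x)$, checks $y\in{\mathcal I}(T,\psi)$, and derives the contradiction from $x\in\mathcal{C}\mathcal{I}(X,T)$ together with $\psi$ vanishing on (a neighbourhood of) $\omega(x)$ --- which is exactly your Observation, stated once instead of twice. Part (d) is also structurally the paper's proof: same subsystem $(Y,S)$ with $Y=\overline{\mathcal{O}_T(x)}$, same identification of $\mathcal{H}(Y,S)$ with the restrictions of $\mathcal{H}(X,T)$ (the paper uses an extension vanishing off $Y^{\delta}$ where you invoke Tietze; both work), and the same appeal to \cite{CCSV21} for the genericity --- the paper cites Corollary D there for each ${\mathcal I}(S,\psi)$ and Proposition 7.2 to pass to the uncountable intersection, and your citation of Corollary 2.1 bundles the same two steps; you correctly flag this as the only imported input. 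Part (a) is where you genuinely diverge, and your route is shorter and yields more. The paper first proves $\bigcap_{x\in\mathcal{C}\mathcal{I}(X,T)}\overline{\mathcal{O}_T(x)}\neq\emptyset$ via a finite-intersection-property argument with a cutoff potential, and then transfers the conclusion to the $\omega$-limit sets through a separate induction (Lemma~\ref{lem_top}) on chains for the partial order $\omega(a)\subseteq\omega(b)$. You instead fix $x_0\in\mathcal{C}\mathcal{I}(X,T)$, take two distinct $\mu,\nu\in V_T(x_0)$, and show that the nonempty closed set $\supp(\mu-\nu)$ lies inside $\omega(x)$ for \emph{every} $x\in\mathcal{C}\mathcal{I}(X,T)$, since any potential supported away from $\omega(x)$ is expelled from $\mathcal{H}(X,T)$ by the Observation and hence cannot separate $\mu$ from $\nu$. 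This bypasses both the compactness step and the induction lemma and produces an explicit common subset of all the $\omega(x)$. Two cosmetic caveats: the passage from $p\in\supp(\mu-\nu)$ to a continuous $\psi$ supported near $p$ with $\int\psi\,d(\mu-\nu)\neq 0$ deserves a line (Hahn decomposition plus the Riesz representation on the open neighbourhood), and the convention that $\mathcal{H}(X,T)=\emptyset$ forces $\mathcal{C}\mathcal{I}(X,T)=\emptyset$ is the same one the paper uses implicitly when it extracts a $\varphi$ with $x_1,\dots,x_N\in{\mathcal I}(T,\varphi)$ from the nonemptiness of $\mathcal{C}\mathcal{I}(X,T)$.
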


Examining the properties of the set of points $x \in X$ whose $V_T(x)$ is maximum, equal to $\mathcal{P}_T(X)$, we recover \cite[Theorem D(3)]{HLT21v2} and improve the information conveyed by the basins of attraction of invariant probability measures.

%The set of \emph{completely regular points} of $T$ is defined by
%$$CReg(X,T) \,= \bigcup_{\mu \,\in \,\mathcal{P}_{T}(X)} \,\mathcal{B}(\mu).$$
%Given $\mu \,\in \,\mathcal{P}_{T}(X)$, denote by
%$\mathcal{B}(\mu)$ the basin of attraction of $\mu$, defined by
%\begin{equation}\label{def:basin}
%\mathcal{B}(\mu) \, = \, \Big\{x \in X \colon \lim_{n \,\to\, +\infty} \,\frac{1}{n}\,\sum_{j=0}^{n-1} \psi(T^{j}(x)) = \int \psi \,d \mu, \,\, \forall\, \psi \in C^0(X,\mathbb{R})\Big\}.
%\end{equation}

\begin{maintheorem}\label{theorem-B}
Let $(X,d)$ be a compact metric space without isolated points and $T\colon X \to X$ be a continuous map. If $T$ is not uniquely ergodic, $\mathcal{M}(X,T) = X$ and $\mathcal{P}_{T}^{erg}(X)$ is dense in $\mathcal{P}_{T}(X)$, then:
\begin{itemize}
\item[(a)] $\big\{x\in X\colon \, V_T(x) = \mathcal{P}_{T}(X)\big\}$ is Baire generic in $X$.
\smallskip
\item[(b)] $\mathcal{C}\mathcal{I}(X,T)$ is Baire generic in $X$.
\smallskip
\item[(c)] $T$ is transitive.
\end{itemize}
\end{maintheorem}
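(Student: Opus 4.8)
The plan is to prove Theorem~\ref{theorem-B} by establishing a genericity statement for $\{x \colon V_T(x) = \mathcal{P}_T(X)\}$ first, since parts (b) and (c) will follow from it almost immediately. For part (a), I would work in the Baire space $X$ and exploit the hypothesis that $\mathcal{P}_T^{erg}(X)$ is dense in $\mathcal{P}_T(X)$, together with $\mathcal{M}(X,T)=X$, which guarantees that every nonempty open set carries mass for some invariant measure and, combined with density of ergodic measures, for some ergodic measure. Fix a countable dense subset $\{\mu_k\}_{k\in\mathbb{N}}$ of $\mathcal{P}_T(X)$ (and hence of $\mathcal{P}_T^{erg}(X)$, after perturbing). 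The target set can be written as
\[
\big\{x\in X\colon V_T(x) = \mathcal{P}_T(X)\big\} \;=\; \bigcap_{k\in\mathbb{N}}\ \bigcap_{m\in\mathbb{N}}\ \big\{x\in X\colon \exists\, n\ \tfrac{1}{n}\textstyle\sum_{j=0}^{n-1}\delta_{T^j(x)}\in B(\mu_k,\tfrac1m)\big\},
\]
using that $\mathcal{P}_T(X)$ is compact and metrizable, so accumulating on a dense set forces $V_T(x)=\mathcal{P}_T(X)$. Each inner set is open (the condition ``there exists $n$ with empirical measure in an open ball'' is open in $x$ by continuity of $x\mapsto \frac1n\sum \delta_{T^j(x)}$ in the weak$^*$-topology), so it remains to show each is dense.

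For density of the set $G_{k,m}=\{x\colon \exists\, n,\ \frac1n\sum_{j=0}^{n-1}\delta_{T^j(x)}\in B(\mu_k,\frac1m)\}$, fix a nonempty open set $U\subseteq X$. Because $\mathcal{M}(X,T)=X$, there is an invariant measure giving positive mass to $U$; refining by the density of ergodic measures, there is an ergodic $\nu$ with $\nu(U)>0$. By the ergodic theorem (applied to the indicator of $U$ and to a countable family of continuous functions witnessing weak$^*$-convergence), $\nu$-almost every point $z$ satisfies $\frac1n\sum_{j=0}^{n-1}\delta_{T^j(z)}\to \nu$ and visits $U$ with positive frequency; in particular some forward iterate $T^{i}(z)$ lies in $U$, and along the orbit of $z$ the empirical measures eventually enter $B(\mu_k,\frac1m)$ because $\mu_k$ is approximated by ergodic measures whose generic points have empirical measures converging to them — here I would instead argue directly: pick an ergodic $\nu'$ with $\nu'\in B(\mu_k,\frac{1}{2m})$, take a $\nu'$-generic point $w$ (empirical measures $\to\nu'$), and we need a point in $U$ whose orbit realizes the same asymptotic statistics. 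This is where the argument must be set up carefully, and it is the main obstacle: transferring ``a generic point for $\nu'$ exists somewhere'' to ``a generic point for $\nu'$ exists inside the prescribed open set $U$''. The cleanest route is to invoke a gluing/specification-type mechanism, but Theorem~\ref{theorem-B} as stated has no shadowing or specification hypothesis; so instead one should use that $\mathcal{M}(X,T)=X$ together with density of ergodic measures to find an ergodic $\nu_U$ with $\nu_U(U)>0$, then use that $\nu_U$-almost every point returns to $U$ and that we may \emph{restart} the orbit at a return time inside $U$, so that a tail of a $\nu'$-generic orbit concatenated appropriately — this requires a minimal-return or topological argument, likely leaning on the fact (to be proved or cited) that under these hypotheses $T$ is transitive, creating a potential circularity that must be broken by proving transitivity first from $\mathcal{M}(X,T)=X$ plus density of ergodic measures.

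Therefore I would reorganize: prove part (c) first. Transitivity follows because $\mathcal{M}(X,T)=X$ means supports of invariant measures are dense, density of ergodic measures lets us pick, for any two nonempty open sets $U,V$, ergodic measures $\nu_U,\nu_V$ with $\nu_U(U)>0$, $\nu_V(V)>0$; if $\nu_U=\nu_V$ ergodicity immediately gives an orbit from $U$ to $V$, and if not, one still concludes $T$ is transitive by a standard argument that the closure of any full-support-ish union of supports forces transitivity — more precisely, since $X=\overline{\bigcup \supp\mu}$ and $T$ is not uniquely ergodic, one shows the non-wandering set is all of $X$ and then uses density of ergodic measures to upgrade to transitivity via the fact that an ergodic measure's generic points are transitive within its support, and these supports are dense. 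With transitivity in hand, part (a) goes through: in the density step for $G_{k,m}$, given $U$ nonempty open, by transitivity and the existence of a $\nu'$-generic point $w$ with $\nu'\in B(\mu_k,\frac1m)$, there is $x$ near $w$ whose orbit first passes through $U$ and then shadows the statistics of $w$ closely enough that its empirical measures still enter $B(\mu_k,\frac1m)$; uniform continuity makes the finitely many initial steps negligible in the weak$^*$-metric. Then (a) is proved, and (b) is immediate: $\{x\colon V_T(x)=\mathcal{P}_T(X)\}\subseteq \mathcal{C}\mathcal{I}(X,T)$ because non-unique ergodicity supplies two distinct invariant measures, hence a continuous $\varphi$ with distinct integrals, so any $x$ with $V_T(x)=\mathcal{P}_T(X)$ has non-convergent Birkhoff averages for \emph{every} $\varphi\in\mathcal{H}(X,T)$ — one checks that for each such $\varphi$ there are two measures in $\mathcal{P}_T(X)=V_T(x)$ with different $\varphi$-integrals, using that $\mathcal{I}(T,\varphi)\neq\emptyset$ forces the range of $\mu\mapsto\int\varphi\,d\mu$ on $\mathcal{P}_T(X)$ to be a nondegenerate interval. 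This yields $\mathcal{C}\mathcal{I}(X,T)\supseteq\{x\colon V_T(x)=\mathcal{P}_T(X)\}$, which is Baire generic, completing (b).
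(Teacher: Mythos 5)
You correctly identify the main obstacle — transferring ``a generic point for $\nu'$ exists somewhere'' into the prescribed open set $U$ — but your proposed resolution does not close it. Transitivity only produces a point of $U$ whose orbit enters a prescribed neighbourhood of the $\nu'$-generic point $w$ at some finite time; after that moment, continuity controls only finitely many further iterates, so nothing forces the subsequent empirical measures of that orbit to approach $\nu'$ or even to re-enter $B(\mu_k,\tfrac1m)$ at arbitrarily large times. ``Shadows the statistics of $w$ closely enough'' is exactly the kind of infinite-time control that would require a shadowing or specification hypothesis, which Theorem~\ref{theorem-B} does not have. The paper's way around this is different and is the key missing idea: under the hypotheses, the set $L=\{\eta\in\mathcal{P}_T^{erg}(X):\supp\eta=X\}$ is Baire generic (hence dense) in $\mathcal{P}_{T}(X)$ — ergodic measures form a dense $G_\delta$ by \cite[Proposition 5.7]{DGS76}, and full-support measures form a dense $G_\delta$ by \cite[Lemma 5.1]{KOR2016} plus \cite[Proposition 21.11]{DGS76}. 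For $\nu'\in L$ one does not need to move a generic point into $U$: since $\nu'$ is ergodic with full support, its basin $\mathcal{B}(\nu')$ has full $\nu'$-measure and is therefore already dense in $X$, so it meets $U$, and any point of $\mathcal{B}(\nu')\cap U$ has empirical measures eventually inside $B(\mu_k,\tfrac1m)$ for \emph{all} large $n$. This is what makes the open sets dense with no dynamical gluing at all (Theorem~\ref{theoremHLT_mod}(a) together with Corollary~\ref{cor:ergodic}(d)).

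Two smaller points. First, your displayed identity for $\{x: V_T(x)=\mathcal{P}_T(X)\}$ is wrong as written: requiring only ``$\exists\,n$'' gives a strictly larger set, since a single visit of the empirical measure to $B(\mu_k,\tfrac1m)$ does not put $\mu_k$ in $V_T(x)$; you need the additional quantifier ``for every $N$ there exists $n>N$'' (the paper's sets $U(N,i)$), which makes the density requirement genuinely an infinite-time statement and reinforces why the finite-time transitivity argument cannot suffice. Second, your route to (c) can be repaired along the same lines: the Baire category argument in $\mathcal{P}_T(X)$ above yields an ergodic measure with full support, whose generic points are dense and have dense orbits; the paper instead deduces transitivity from the existence of a point $x_0$ with $V_T(x_0)=\mathcal{P}_T(X)$, since then $X=\mathcal{M}(X,T)\subseteq\omega(x_0)$. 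Your argument for (b) from (a) is correct and is the same as the paper's (it is \cite[Lemma 4.4]{Tian}).
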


We refer the reader to Remark~\ref{rem:Big VT} where we prove item (a) of Theorem~\ref{theorem-B} under a different set of assumptions.

It is known that every topologically mixing homeomorphism $T\colon X \to X$ of a compact metric space $X$ with the shadowing property also satisfies the specification property (see \cite{DGS76}). Moreover, if $T$ satisfies the specification property (hence $T$ is transitive, see \cite{DGS76, KLO16}) and is not uniquely ergodic, then the set of completely irregular points is Baire generic in $X$ (cf. \cite[Corollary 1]{Tian}). Yet, there exist examples of transitive continuous not uniquely ergodic maps satisfying the shadowing property but which do not have the specification property (cf. \cite[section 5]{LO18}). In \cite{DTY15}, Dong, Tian and Yuan proved that, if $T$ is not uniquely ergodic, satisfies the asymptotic average shadowing property and its measure center coincides with $X$, then $T$ is transitive and the set of completely irregular points is Baire generic in $X$. Our next result establishes that the transitivity together with the shadowing property are in fact enough to guarantee that the completely irregular set is Baire generic in $X$, provided that $X$ has no isolated points and
$T$ is not uniquely ergodic.

\begin{maintheorem}\label{theorem-C}
Let $(X,d)$ be a compact metric space without isolated points and $T\colon X \to X$ be a transitive continuous map which is not uniquely ergodic and satisfies the shadowing property.
%either $T$ is minimal but not uniquely ergodic, or $T$ is not minimal.
Then:
\begin{itemize}
\item[(a)] $\mathcal{H}(X,T) \,\neq\, \emptyset$.
\smallskip
\item[(b)] $T$ has positive topological entropy.
\smallskip
\item[(c)] $\mathcal{M}(X,T) \,=\, X$.
\smallskip
\item[(d)] $\bigcup_{\mu \,\in\, \mathcal{P}_{T}^{erg}(X)} \,\mathcal{B}(\mu)$ is dense in $X$.
\smallskip
\item[(e)] $\mathcal{C}\mathcal{I}(X,T)$ is Baire generic in $X$.
\smallskip
\item[(f)] $T$ has sensitivity to initial conditions.
\end{itemize}
\end{maintheorem}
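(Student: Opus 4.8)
The plan is to establish the six items in a logical rather than the listed order, first drawing the structural consequences of transitivity combined with the shadowing property, and then feeding them into Theorem~\ref{theorem-B}. I would start with item (c), since it underlies (d), (a) and (e). Because $T$ is transitive and $X$ has no isolated points, $\Omega(T)=X$, so the chain recurrent set of $T$ is all of $X$. Fix a nonempty open $U$, a point $p\in U$ and $\varepsilon>0$ with $\overline{B(p,\varepsilon/2)}\subseteq U$, and let $\delta>0$ be a shadowing constant for $\varepsilon/2$. Chain recurrence of $p$ yields a periodic $\delta$-pseudo-orbit $(y_n)_{n\ge0}$ of some period $m\ge1$ with $y_0=p$, and shadowing provides $q$ with $d(T^nq,y_n)<\varepsilon/2$ for all $n$; in particular $d(T^{nm}q,p)<\varepsilon/2$ for every $n\ge0$. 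Any weak$^*$ accumulation point $\nu$ of $\bigl(\tfrac1N\sum_{n=0}^{N-1}\delta_{T^{nm}q}\bigr)_{N}$ is $T^m$-invariant and, by the portmanteau theorem applied to the closed set $\overline{B(p,\varepsilon/2)}$, gives it full measure; then $\mu:=\tfrac1m\sum_{i=0}^{m-1}T^i_*\nu$ is $T$-invariant with $\mu(U)\ge\tfrac1m\,\nu(U)>0$. Since $U$ was arbitrary, $\overline{\bigcup_{\mu\in\mathcal P_T(X)}\supp\mu}=X$, which is (c). Item (d) then follows at once: for a nonempty open $U$ choose $\mu\in\mathcal P_T(X)$ with $\mu(U)>0$, pass to an ergodic component $\rho$ with $\rho(U)>0$, and note that by Birkhoff's theorem $\rho$-almost every point of $U$ lies in $\mathcal B(\rho)$; hence $\bigcup_{\rho\in\mathcal P_T^{erg}(X)}\mathcal B(\rho)$ meets every nonempty open set.

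For item (a) I would run the standard construction of a point with historic behavior. Non-unique ergodicity gives ergodic measures $\mu_1\ne\mu_2$ and a $\varphi\in C^0(X,\mathbb R)$ with $\int\varphi\,d\mu_1\ne\int\varphi\,d\mu_2$; fix generic points $x_1,x_2$ for $\mu_1,\mu_2$. Using chain transitivity to connect orbit segments and shadowing to realize the resulting $\delta$-pseudo-orbit, I would build $z\in X$ whose forward orbit follows $x_1$ for $n_1$ steps, then $x_2$ for $n_2$ steps, then $x_1$ for $n_3$ steps, and so on, with $n_k\to\infty$ growing fast enough that $\bigl(\tfrac1n\sum_{j<n}\varphi(T^jz)\bigr)_n$ oscillates between values arbitrarily close to $\int\varphi\,d\mu_1$ and $\int\varphi\,d\mu_2$; then $z\in\mathcal I(T,\varphi)$, so $\mathcal H(X,T)\ne\emptyset$. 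This also disposes of the degenerate alternative $\mathcal H(X,T)=\emptyset$ (where one would have $\mathcal{C}\mathcal I(X,T)=X$), so that items (a) and (e) are consistent.

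To prove (e) I would verify the hypotheses of Theorem~\ref{theorem-B}: non-unique ergodicity is assumed, $\mathcal M(X,T)=X$ is (c), and it only remains to show $\mathcal P_T^{erg}(X)$ is dense in $\mathcal P_T(X)$. Given $\mu\in\mathcal P_T(X)$ and $\varepsilon>0$, approximate $\mu$ weak$^*$ by a finite convex combination $\sum_i t_i\mu_i$ of ergodic measures (possible in the metrizable compact convex set $\mathcal P_T(X)$ by Krein--Milman), take generic points for the $\mu_i$, and --- again bridging with chain transitivity and realizing with shadowing, now with the successive blocks growing so as to force a single limit --- produce a point whose empirical measures converge to an \emph{ergodic} measure within $2\varepsilon$ of $\mu$; alternatively, one may quote that density of the ergodic measures is a known consequence of the shadowing property on a chain transitive system. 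Theorem~\ref{theorem-B}(b) then gives that $\mathcal{C}\mathcal I(X,T)$ is Baire generic, which is (e).

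Item (b) I would obtain from the structure theory of transitive maps with shadowing: such a map is either equicontinuous --- hence, being transitive, uniquely ergodic, which is excluded --- or, after passing to a power of $T$ restricted to one of the clopen pieces of its cyclic decomposition, it has a chain mixing subsystem with the shadowing property that is still not uniquely ergodic; there, fixing a large block length $L$ and using chain mixing to make all transition chains of a common length, one concatenates $L$-step generic segments of two ergodic measures separated by some $\varphi$, and the $2^n$ shadowing orbits so produced form an $(n(L+c),\eta)$-separated set for suitable $c,\eta>0$, whence $h_{\mathrm{top}}(T)\ge(\log2)/(L+c)>0$. Item (f) then follows because a transitive map of positive topological entropy cannot be almost equicontinuous --- almost equicontinuous maps have zero entropy --- so by the Akin--Auslander--Berg dichotomy it is sensitive to initial conditions. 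The genuinely delicate step is the measure-theoretic input to (e): turning the purely topological shadowing hypothesis, together with chain transitivity, into density of the ergodic measures (equivalently, into the criterion of \cite[Corollary 2.1]{CCSV21}); the synchronization of blocks forcing the preliminary cyclic decomposition in (b) is the secondary technical hurdle.
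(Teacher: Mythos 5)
Your proposal is correct and reaches all six items, but it swaps several of the paper's citations for direct constructions, so the two routes genuinely differ. For item (c) the paper quotes \cite[Corollary 1]{Moo11} (density of minimal points in $\Omega(T)$ under shadowing) and the inclusion $\AP(X,T)\subseteq\mathcal{M}(X,T)$, whereas your argument --- shadowing a periodic $\delta$-chain through an arbitrary $p\in U$, extracting a $T^m$-invariant limit of empirical measures concentrated on $\overline{B(p,\varepsilon/2)}$ and averaging over $T^0,\dots,T^{m-1}$ --- is a self-contained and correct substitute; item (d) is then proved identically in both (ergodic decomposition plus Birkhoff, as in Lemma~\ref{lemma_dense}). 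For (e) both arguments rest on the same nontrivial input, the density of $\mathcal{P}_{T}^{erg}(X)$ in $\mathcal{P}_{T}(X)$ for transitive systems with shadowing (\cite[Theorem A]{LO18}), fed into Theorem~\ref{theorem-B}; note that your sketched direct construction (forcing a single limit of empirical measures near a convex combination) would not by itself make that limit \emph{ergodic}, so the citation you offer as an alternative is in fact the necessary route here, exactly as in the paper. For (a) and (b) the paper obtains $\mathcal{H}(X,T)\neq\emptyset$ as a byproduct of Corollary~\ref{cor:ergodic} and then invokes the equivalence $\mathcal{H}(X,T)\neq\emptyset\Leftrightarrow h_{\mathrm{top}}(T)>0$ from \cite[Theorem 1.5]{DOT18}, while you build an irregular point and a $(n(L+c),\eta)$-separated set by hand via chain transitivity plus shadowing; both constructions are standard and completable, at the cost of the block-synchronization and cyclic-decomposition bookkeeping you yourself flag. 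Finally, for (f) the paper passes through $(T,\varphi)$-sensitivity and Corollary~\ref{maincorollary_sensensitive}, whereas you combine positive entropy with the Akin--Auslander--Berg almost equicontinuous/sensitive dichotomy \cite{AAB} and the zero entropy of almost equicontinuous transitive systems; this is a valid alternative that trades the paper's machinery for two classical facts not otherwise used there.
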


%Transitivity is a necessary condition for the denseness of the ergodic probability measures in the space of invariant ones (cf. M. Denker, C. Grillenberger and K. Sigmund. Ergodic Theory on Compact Spaces. If T has a dense set of periodic points and the shadowing property then and T satisfies the specification property. Specification is a sufficient condition for the denseness of the ergodic probability measures in the space of invariant ones, so it implies transitivity. What about shadowing, does it imply transitivity?

Summoning the knowledge of \cite{WIN15} on set valued functions, we can add information on the set of continuity points of the map $W_{\varphi}$. More precisely, given a Baire metric space $(Y, d)$, a continuous function $T\colon Y \to Y$ and $\varphi \in C^b(Y,\mathbb R)$, define
\begin{eqnarray}\label{def:Wphi}
W_{\varphi}\colon  \quad Y &\to& \mathcal{K}([\inf \varphi, \sup \varphi]) \nonumber \\
\quad y &\mapsto& \bigcap\limits_{N=1}^{+\infty} \, \overline{\Big\lbrace  \frac{1}{n}\,\sum\limits_{j=0}^{n-1} \,\varphi(T^{j}(y))\colon \, n \geq N \Big\rbrace}.
\end{eqnarray}
where, given a metric space $(W,D)$, the set $\mathcal{K}(W)$ is the collection of all nonempty compact subsets of $W$ endowed with the Hausdorff metric $d_{H}$ defined by
$$d_{H}(A,B) \, = \,\max \big\{d_{0}(A,B),\, d_{0}(B,A)\big\} \quad \forall\, A, B \in \mathcal{K}(W)$$
where $d_{0}(A,B) \,=\, \sup_{b \,\in\, B} \, \inf\,\big\{D(a,b) \colon \, a \in A\big\}.$
%Given non-empty subsets $A$ and $B$ of $Y$, define $d_{0}(A,B) = \sup_{b \,\in\, B} D(A, b)$, where $D(A,b) = \inf\,\{d(a,b) \colon \, a \in A\}$. Let $\mathcal{K}(W)$ stand for the collection of all nonempty compact subsets of $W$ endowed with the Hausdorff metric $d_{H}$ defined by
%$$d_{H}(A,B) \, = \,\max\{d_{0}(A,B),\, d_{0}(B,A)\} \quad \forall\, A, B \in \mathcal{K}(W).$$
The next result is a particular case of a more general statement (see Theorem~\ref{Theorem-Psi_f}) that we will prove in Section~\ref{se:pD}.

\begin{maintheorem}\label{theorem-D}
Let $(X,d)$ be a compact metric space, $T\colon X \to X$ be a transitive continuous map and $\varphi \in C^0(X,\mathbb R)$. Then the following assertions are equivalent:
\begin{enumerate}
\item[$(a)$] ${\mathcal I}(T,\varphi)$ is Baire generic in $X$.
\smallskip
\item[$(b)$] There exists a dense subset $\mathcal{Z}$ of $X$ such that $\mathcal{Z} \cap {\mathcal I}(T,\varphi) \neq \emptyset$ and $W_{\varphi}$ is lower semi-continuous in $\mathcal{Z}$.
\end{enumerate}
\end{maintheorem}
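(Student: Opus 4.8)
The plan is to exploit the characterization of $\mathcal{I}(T,\varphi)$ in terms of the set-valued map $W_{\varphi}$, together with the fact that transitivity lets one upgrade a single point with prescribed behaviour to a Baire generic set of such points. First observe that $y \in \mathcal{I}(T,\varphi)$ if and only if $W_{\varphi}(y)$ is not a singleton: indeed $W_{\varphi}(y)$ is precisely the set of accumulation points of the sequence of Birkhoff averages of $\varphi$ along the orbit of $y$, so this sequence converges exactly when $W_{\varphi}(y)$ reduces to one point. Hence $\mathcal{I}(T,\varphi) = \{ y \in X : \operatorname{diam} W_{\varphi}(y) > 0 \}$, and $\mathcal{I}(T,\varphi) = \bigcup_{k \in \mathbb{N}} \{ y : \operatorname{diam} W_{\varphi}(y) \geq 1/k \}$.

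For the implication $(a) \Rightarrow (b)$, suppose $\mathcal{I}(T,\varphi)$ is Baire generic. Since $X$ is compact metric it is a Baire space, so $\mathcal{I}(T,\varphi)$ is in particular dense; take $\mathcal{Z} = \mathcal{I}(T,\varphi)$ itself, which trivially meets $\mathcal{I}(T,\varphi)$. It then remains to show $W_{\varphi}$ is lower semi-continuous at every point of the generic set, or rather that lower semi-continuity holds on a dense set; here I would invoke the results of \cite{WIN15}: a set-valued map into $\mathcal{K}(W)$ of the type defining $W_{\varphi}$ is always \emph{upper} semi-continuous when the ambient dynamics is continuous, and its set of continuity points (equivalently, here, of lower semi-continuity points) is a residual subset of $X$ — this is the classical Fort/Kuratowski dichotomy for Hausdorff-valued maps that are semi-continuous on one side. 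Intersecting this residual set with the Baire generic set $\mathcal{I}(T,\varphi)$ produces a dense (in fact residual) set $\mathcal{Z}$ on which $W_{\varphi}$ is lower semi-continuous and which meets $\mathcal{I}(T,\varphi)$, giving $(b)$.

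For the converse $(b) \Rightarrow (a)$, pick $z_{0} \in \mathcal{Z} \cap \mathcal{I}(T,\varphi)$, so there is $\eta > 0$ with $\operatorname{diam} W_{\varphi}(z_{0}) > \eta$; by lower semi-continuity of $W_{\varphi}$ along $\mathcal{Z}$ there is a neighbourhood $U$ of $z_{0}$ such that for every $z \in U \cap \mathcal{Z}$ the compact set $W_{\varphi}(z)$ contains points within $\eta/4$ of the two extreme values realized in $W_{\varphi}(z_{0})$, hence $\operatorname{diam} W_{\varphi}(z) > \eta/2$; since $\mathcal{Z}$ is dense, the open set $V := \{ y : \operatorname{diam} W_{\varphi}(y) > \eta/2 \}$ — open because $W_{\varphi}$ is upper semi-continuous, so $y \mapsto \operatorname{diam} W_{\varphi}(y)$ is upper semi-continuous, hence $\{\operatorname{diam} > \eta/2\}$... — wait, that is lower semi-continuity of the diameter; the correct statement is that $\{ y : \operatorname{diam} W_{\varphi}(y) \geq \eta/2 \}$ contains $U \cap \mathcal{Z}$ which is dense in $U$, so this closed-or-$G_\delta$ set has nonempty interior relative to the orbit closure. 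Now use transitivity: by Theorem~\ref{theorem-D} being a special case of Theorem~\ref{Theorem-Psi_f}, or more directly by the standard argument, a transitive point $x$ has a dense orbit, and $W_{\varphi}$ is essentially constant along an orbit in the sense that $W_{\varphi}(T^{n}x)$ stabilizes; since the orbit of a transitive point enters $V$, one deduces $\operatorname{diam} W_{\varphi}(x) \geq \eta/2$ for every transitive point $x$. Finally, one shows that $\mathcal{I}(T,\varphi)$ is a $G_{\delta}$ set — from $\mathcal{I}(T,\varphi) = \bigcup_k \{ \operatorname{diam} W_{\varphi} \geq 1/k\}$ and lower semi-continuity of the diameter each set $\{\operatorname{diam} W_{\varphi} > 1/k\}$ is... in fact one writes $\mathcal{I}(T,\varphi)$ as a countable union of differences and uses the classical fact that the historic set is always $G_{\delta}$, which follows from expressing non-convergence of the averages via a $\limsup - \liminf > 0$ condition with the lim sup and lim inf each a Baire-class-one function. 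Combining: $\mathcal{I}(T,\varphi)$ is a $G_{\delta}$ set containing all transitive points, and the transitive points are dense; a dense $G_{\delta}$ in a Baire space is Baire generic, yielding $(a)$.

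The main obstacle I anticipate is the bookkeeping in $(b) \Rightarrow (a)$: one must be careful that lower semi-continuity is only assumed on the dense set $\mathcal{Z}$, not on all of $X$, so propagating the lower bound on $\operatorname{diam} W_{\varphi}$ from $z_{0}$ to a genuinely open set requires combining the one-sided semi-continuity along $\mathcal{Z}$ with the always-valid upper semi-continuity of $W_{\varphi}$ on $X$, and then transporting this along dense orbits using the (easy but slightly delicate) fact that $W_{\varphi}(T y) = W_{\varphi}(y)$. Verifying that this transport is valid for \emph{every} transitive point, and that the resulting set is exactly the $G_{\delta}$ historic set rather than merely containing a dense subset of it, is where the argument needs the full strength of transitivity plus the structural description of $W_{\varphi}$ from \cite{WIN15}; I would isolate the statement ``$W_\varphi$ is upper semi-continuous on $X$ and its lower semi-continuity points form a residual set'' as a preliminary lemma before entering the proof proper.
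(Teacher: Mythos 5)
Your implication $(b)\Rightarrow(a)$ has a genuine gap at the transport step. From lower semi-continuity at $z_0\in\mathcal{Z}\cap\mathcal{I}(T,\varphi)$ you correctly obtain $\diam W_{\varphi}(z)\geq \eta/2$ for all $z\in B(z_0,\delta)\cap\mathcal{Z}$ --- but only for points of $\mathcal{Z}$. You then assert that every transitive point $x$ satisfies $\diam W_{\varphi}(x)\geq \eta/2$ because its orbit enters this ball; that does not follow, since $T^m(x)$ may land in $B(z_0,\delta)$ without landing in $B(z_0,\delta)\cap\mathcal{Z}$, and the lower bound is not available off $\mathcal{Z}$: the set $\{y\colon \diam W_{\varphi}(y)>\eta/2\}$ is not open because $W_{\varphi}$ is \emph{not} upper semi-continuous on all of $X$ (at a periodic point of a full shift $W_{\varphi}$ is a singleton while arbitrarily nearby points have $W_{\varphi}$ equal to a nondegenerate interval). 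This also undermines the preliminary lemma you propose for $(a)\Rightarrow(b)$, and is precisely why the paper's Theorem~\ref{Theorem-Psi_f} must pass through the upper regularization to produce a residual set of continuity points. Worse, the conclusion you aim for is false under the hypotheses: for the full shift there are transitive points that are generic for a fully supported ergodic measure, hence regular for every $\varphi$. The further claim that $\mathcal{I}(T,\varphi)$ is $G_\delta$ is also false in general; the divergence set of a sequence of continuous functions is only $G_{\delta\sigma}$.

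The paper transports the estimate in the opposite direction. Transitivity gives, for any nonempty open $U$, some $k$ with $T^{-k}(U)\cap B(z_0,\delta)\neq\emptyset$; density of $\mathcal{Z}$ then yields $w\in T^{-k}(U)\cap B(z_0,\delta)\cap\mathcal{Z}$, and the identity $W_{\varphi}(T^{k}(w))=W_{\varphi}(w)$ produces a point of $U$, lying in the dense set $\bigcup_{z\,\in\,\mathcal{Z}}\mathcal{O}_T(z)$, with $\diam W_{\varphi}\geq\eta/2$. This is exactly the $(T,\varphi)$-chaotic condition of Definition~\ref{defTphichaotic}, and Theorem~\ref{CV_mod_chaotic} (the closed sets $\Lambda_{N}$ on which the averages are uniformly Cauchy have empty interior) then yields genericity of $\mathcal{I}(T,\varphi)$ without ever needing the irregular set to be $G_\delta$ or to contain all transitive points. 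Your direction $(a)\Rightarrow(b)$ does reach the right set $\mathcal{Z}$, but the residuality of the continuity points of $W_{\varphi}$ must be quoted from Theorem~\ref{Theorem-Psi_f} and Corollary~\ref{Cor_Psi} rather than deduced from a global upper semi-continuity that does not hold.
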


The remainder of the paper is organized as follows. In Section~\ref{se:def}, we include a short glossary of the main concepts we will use. The aforementioned Theorems~\ref{theorem-A}, \ref{theorem-B} and \ref{theorem-C} are proved in the ensuing sections, where we also compare them with results established in other references. In Section~\ref{se:ci-orbits} we provide information on the orbits of completely irregular points.
 %of non-uniquely ergodic maps which are transitive and satisfy the shadowing property.
In Section~\ref{se:examples} we discuss several examples, recovering and extending known information about them. In Section~\ref{se:chaotic}, we present a dynamical condition that is both necessary and sufficient to ensure that the set of irregular points is Baire generic, which is used in Section~\ref{se:pD} to prove Theorem~\ref{theorem-D}.

\section{Definitions}\label{se:def}

Assume that $(X,d)$ is a compact metric space and $T\colon X \to X$ is a continuous map.

\subsection{Transitivity}

The map $T$ is \textit{transitive} if for every nonempty open sets $U,\,V \subset X$ there exists $n \in \mathbb{N}$ such that $U \cap T^{-n}(V) \neq \emptyset$. $T$ has a \textit{dense orbit} if there is $x \in X$ such that $\mathcal{O}_T(x)$ is dense in $X$. It is worthwhile observing that, if the metric space is compact and has no isolated points, then the map is transitive if and only if it has a dense orbit (see \cite[Theorem~1.4]{AC12}); similarly, if $T$ is surjective, then $T$ is transitive if and only if it has a dense orbit (see \cite[Theorem 5.9]{W78}). Denote by $\mathrm{Trans}(X,T)$ the set of points $x \in X$ whose orbit by $T$ is dense in $X$.

We say that $T$ is \emph{minimal} if every orbit by $T$ is dense in $X$. We note that there are examples of minimal continuous maps on compact metric spaces which are uniquely ergodic (like the minimal rotations of the unit circle,  \cite[Theorem 6.20]{W78}), and there also exist minimal continuous maps which are uniquely ergodic and satisfy the shadowing property (for instance, an adding machine on a Cantor set, \cite{Moo13, MaiYe}). In both cases, the set of irregular points is empty.

\subsection{Sensitivity}

We say that $T$ has \emph{sensitivity to initial conditions} if there exists $\varepsilon > 0$ such that, for every $x \in X$ and any $\delta > 0$, there is $z \in B(x,\delta)$ satisfying
$$\sup_{n\,\in\,\mathbb N} \, d(T^{n}(x), T^{n}(z)) > \varepsilon$$
where $B(x,\delta)$ stands for the ball centered at $x$ with radius $\delta$.

A fundamental result provided by Auslander and Yorke claims that a minimal homeomorphism of a compact metric space is either equicontinuous or sensitive to initial conditions  (cf. \cite{AY80}; see \cite{GLM20, HKZ18} for a similar result). From this dichotomy, combined with fact that a minimal equicontinuous homeomorphism is uniquely ergodic (cf. \cite[Proposition 2.3]{B76}), we deduce that, whenever $X$ has no isolated points and $T\colon X \to X$ is a minimal homeomorphism which is not uniquely ergodic, then $T$ has sensitivity to initial conditions. Later, Akin et. al established that, if $T$ is transitive, though not minimal, and the set of minimal points is dense in $X$, then $T$ has sensitivity to initial conditions (cf \cite[Theorem 2.5]{AAB}). Recently, Moothathu showed that, if $T$ is a transitive homeomorphism satisfying the shadowing property, then either $T$ has sensitivity to initial conditions or $T$ is equicontinuous (cf. \cite[Corollary 1]{Moo11}).

\subsection{Shadowing}

A sequence $\{x_n\}_{n\in\mathbb{N}}\subset X$ is called a \textit{$\delta$-pseudo-orbit} of $T$ if
$$d(T(x_n),\,x_{n+1}) \,<\, \delta \quad \forall \, n \in \mathbb{N}.$$
A $\delta$-pseudo-orbit $\{x_n\}_{n\in\mathbb{N}}$ is said to be \textit{$\varepsilon$-shadowed} if there exists $y \in X$ such that
$$d(T^n(y),\,x_n) \,<\, \varepsilon \quad \forall\, n \in \mathbb{N}.$$
A map $T\colon X\to X$ has the \emph{shadowing property} if for any $\varepsilon > 0$ there exists $\delta>0$ such that any $\delta$-pseudo-orbit is $\varepsilon$-shadowed.

We note that the shadowing property is $C^0$ generic in the space of homeomorphisms of any compact manifold, and a similar result holds for $C^0$ generic continuous maps (cf. \cite{Pilyugin}).

\subsection{Specification}
According to Bowen~\cite{Bowen}, $T$ satisfies the \emph{specification property} if for any $\varepsilon > 0$ there exists $N(\varepsilon) \in \mathbb{N}$ such that for every $k \in \mathbb{N}$, any points $x_1, \dots, x_k$ in $X$, any sequence of positive integers $n_1, \dots, n_k$ and every choice of positive integers $L_1, \dots, L_k$ with $L_i \geqslant N(\varepsilon)$, there exists a point $x_0$ in $X$ such that
$$d\big(T^j(x_0),\,T^j(x_1)\big) \leqslant \varepsilon \quad \quad \forall \, 0 \leqslant j \leqslant n_1$$
and
$$d\big(T^{j + n_1 + L_1 + \dots + n_{i-1} + L_{i-1}}(x_0),\, T^j(x_i)\Big) \leqslant \varepsilon \quad \quad \forall \, 2 \leqslant i \leqslant k \quad \forall\, 0\leqslant j\leqslant n_i.$$

It is known that full shifts on finitely many symbols satisfy the specification property. Besides, factors of maps with the specification property also enjoy this property (cf. \cite{DGS76}). The importance of the specification property is illustrated by the fact that it guarantees that the set of probability measures supported on periodic orbits is dense in $\mathcal{P}_{T}(X)$  (cf. \cite{DGS76}), thus $\mathcal{P}_{T}^{erg}(X)$ is dense in $\mathcal{P}_{T}(X)$.

\subsection{Almost periodicity}

A subset $A$ of $X$ is called $T$-\emph{invariant} if $T(A) \subseteq A$; $A$ is said to be \emph{minimal} if it is nonempty, closed and $T$-invariant, and no proper nonempty subset of $A$ has these three properties altogether. A point $x \in X$ is said to be \emph{almost periodic} by $T$ if, for any open neighborhood $U$ of $x$ and every $n \in \mathbb{N}$, there exist $N \in \mathbb{N}$ and $k \in [n, n+N]$ such that $T^k(x) \in U$. Denote the set of almost periodic points by $T$ by $\AP(X,T)$ and the set of periodic points of $T$ by $\mathrm{Per}(X,T)$. In \cite{B66}, Birkhoff showed that $x$ is almost periodic by $T$ if and only if the closure of its orbit is a minimal set (and so such an $x$ is also called a \emph{minimal point}). We note that, by Zorn's Lemma, the set of minimal points is not empty. %In what follows $\Min_{T}(X)$ stands for the set $$\Min_{T}(X) \,=\, \{x \in X \colon  \overline{\{T^{n}(x)\colon n \in \mathbb{N}\cup\{0\}\}} \text{ is a minimal set of $X$} \}$$.
Clearly, the set $\mathrm{Per}(X,T)$ is a subset (possibly empty) of $\AP(X,T)$; moreover, $\mathrm{Per}(X,T) \subseteq  \AP(X,T) \subseteq \mathcal{M}(X,T)$.

In \cite{Gr72}, one may find examples of subshifts with positive entropy such that every point is minimal. We refer the reader to \cite{DT18} and \cite{Moo13} for other interesting relations between the set $\AP(X,T)$ and the entropy. It is known  (cf. \cite[Proposition 2]{S74}) that, for continuous map with the specification property acting on a compact metric space, the set of periodic points is dense in $X$; hence $\AP(X,T)$ is dense as well. The density of $\AP(X,T)$ with respect to other subspaces has recently received more attention (cf. \cite{HTW19, Moo11, KOR2016}). In particular, it has been proved that the set of minimal points is dense in the non-wandering set for dynamical systems with the shadowing property (cf. \cite[Corollary 1]{Moo11}), and that the set of minimal points is dense in the measure center of every dynamical system with the almost specification property (cf. \cite[Theorem 5.2]{KOR2016}).

\section{Proof of Theorem \ref{theorem-A}}\label{se:proof-A}

In what follows, given a set $E \subset X$ and $\delta>0$ we will denote by $E^{\delta}$ the union of the balls centered at points of $E$ with radius $\delta$, that is,
$$E^{\delta} \,=\, \bigcup_{x\, \in\, E} \,B(x, \delta).$$

\noindent $(a)$ We start by showing that, if the set $\mathcal{C}\mathcal{I}(X,T)$ of completely irregular points of $T$ is nonempty, then $\bigcap_{x \,\in\, \mathcal{C}\mathcal{I}(X,T)} \overline{\mathcal{O}_T(x)} \neq \emptyset$. Suppose, by contradiction, that the latter set is empty. Since $X$ is compact,
%by Borel theorem
there exist $N \in \mathbb{N}$ and $x_{1}, \cdots, x_{N} \in \mathcal{C}\mathcal{I}(X,T)$ such that $\bigcap_{j=1}^{N} \overline{\mathcal{O}_T(x_{j})} = \emptyset$. So, there is $\delta > 0$ such that $\overline{\mathcal{O}_T(x_{1})\,}^{\delta} \cap \overline{\mathcal{O}_T(x_{j})} = \emptyset $ for all $j \in \{2,\cdots,N\}$. Since $\mathcal{C}\mathcal{I}(X,T)$ is not empty, we may find a continuous function $\varphi\colon X \to \mathbb{R}$ such that $x_{1},\cdots, x_{N}$ belong to
$\mathcal{I}(\varphi,T)$. Take a continuous function $\xi\colon X \to \mathbb{R}$ such that
$$\xi_{\mid_{\overline{\mathcal{O}_T(x_{1})}}} \, = \,1 \quad \quad \text{ and } \quad \quad \xi_{\mid_{X \setminus \overline{\mathcal{O}_T(x_{1})\,}^{\delta}}} \, = \,0$$
and consider the continuous map $\psi\colon X \to \mathbb{R}$ defined by $\psi(x) = \varphi(x)\,\xi(x)$. Therefore, $x_{1} \in \mathcal{I}(\psi,T)$, but $x_{2},...,x_{n}$ are not in $\mathcal{I}(\psi,T)$. This contradicts the choice of $x_{1},\cdots,x_{n} \in \mathcal{C}\mathcal{I}(X,T)$. Thus, $\bigcap_{x \,\in \,\mathcal{C}\mathcal{I}(X,T)} \overline{\mathcal{O}_T(x)} \neq \emptyset$.

\begin{lemma}\label{lem_top}
Let $F$ be a nonempty finite set of points of $X$. Then
\begin{equation}\label{eq:top}
\bigcap_{x \,\in\, F} \,\overline{\mathcal{O}_T(x)} \neq \emptyset \quad \Rightarrow \quad \bigcap_{x \,\in\, F} \,\omega(x) \neq \emptyset.
\end{equation}
\end{lemma}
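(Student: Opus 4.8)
The plan is to reduce the whole statement to a single, well-chosen point lying in the common closure. I would start by recalling the elementary decomposition $\overline{\mathcal{O}_T(x)} = \mathcal{O}_T(x) \cup \omega(x)$, valid for any continuous $T$ on a compact metric space: the inclusion $\supseteq$ is clear, and if $y \in \overline{\mathcal{O}_T(x)} \setminus \mathcal{O}_T(x)$, then every neighbourhood of $y$ must meet $\{T^n(x)\colon n \geq N\}$ for each $N$ (otherwise one could shrink the neighbourhood so as to avoid the finitely many orbit points with index $< N$ and contradict $y \in \overline{\mathcal{O}_T(x)}$), hence $y \in \omega(x)$.

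The heart of the argument is the claim that $z \in \overline{\mathcal{O}_T(x)}$ forces $\omega(z) \subseteq \omega(x)$, and I would prove it using the decomposition above. If $z = T^m(x)$ for some $m \geq 0$, then a reindexing gives $\{T^n(z)\colon n \geq N\} = \{T^n(x)\colon n \geq N + m\}$ for every $N$, whence $\omega(z) = \omega(x)$. If instead $z \in \omega(x)$, then, since $\omega(x)$ is closed and forward invariant (i.e. $T(\omega(x)) \subseteq \omega(x)$, by continuity of $T$), we get $\overline{\mathcal{O}_T(z)} \subseteq \omega(x)$ and therefore $\omega(z) \subseteq \overline{\mathcal{O}_T(z)} \subseteq \omega(x)$. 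In both cases the claim follows.

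With the claim at hand the lemma is immediate: assuming $\bigcap_{x \in F} \overline{\mathcal{O}_T(x)} \neq \emptyset$, pick a point $z$ in this intersection; by the claim $\omega(z) \subseteq \omega(x)$ for every $x \in F$, so $\omega(z) \subseteq \bigcap_{x \in F} \omega(x)$. Since $X$ is compact, $\omega(z)$ is a nested intersection of nonempty closed sets, hence nonempty, and the conclusion $\bigcap_{x \in F} \omega(x) \neq \emptyset$ follows. (The finiteness of $F$ is in fact not needed here; it is used only in the application, where one first extracts a finite subfamily from an empty infinite intersection by compactness.) The argument is entirely elementary, so I do not expect a genuine obstacle; the only points deserving care are the two-case verification of $\omega(z) \subseteq \omega(x)$ and the standing facts that $\omega$-limit sets of a continuous map on a compact metric space are nonempty, closed and forward invariant.
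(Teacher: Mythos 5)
Your proof is correct, and it takes a genuinely different and more economical route than the paper's. The paper argues by induction on $\#F$: it introduces the preorder $a \leq_{*} b \Leftrightarrow \omega(a) \subseteq \omega(b)$, decomposes the set into chains, reduces the intersection to the minimal elements of those chains, and handles the remaining case (all chains singletons) via the decomposition $\overline{\mathcal{O}_T(x)} = \mathcal{O}_T(x) \cup \omega(x)$ together with pairwise disjointness of the orbits. You instead isolate the single monotonicity fact that $z \in \overline{\mathcal{O}_T(x)}$ implies $\omega(z) \subseteq \omega(x)$, verified by the two-case analysis ($z$ on the orbit, or $z$ in the closed forward-invariant set $\omega(x)$), and then apply it to one point $z$ of the common closure to get $\emptyset \neq \omega(z) \subseteq \bigcap_{x \in F}\omega(x)$. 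Both arguments are sound and both ultimately rest on the same elementary facts (the decomposition of $\overline{\mathcal{O}_T(x)}$ and the forward invariance of $\omega$-limit sets, which the paper also uses implicitly in its "chains are singletons" case), but your version avoids the induction and the combinatorics of chains entirely, and, as you note, it proves the stronger statement for arbitrary (not necessarily finite) families $F$ — the finiteness in the paper's formulation is only needed downstream, where compactness is used to extract a finite subfamily from an empty intersection of closed sets.
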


\begin{proof}
The proof will be done by induction on the cardinality of $F$. If $\# F = 1$, then the statement holds trivially by the compactness of $X$. Suppose now that, for a fixed positive integer $n$ and any set $F \subset X$ with cardinality smaller or equal to $n$, the assertion \eqref{eq:top} is valid. Take a set $G \subset X$ with $n+1$ elements such that $\bigcap_{x \,\in \,G} \overline{\mathcal{O}_T(x)}\neq \emptyset$.

Consider the following partial order in $G$, which we denote by $\leq_{*}$: given $a, b \in G$,
$$a \,\leq_{*}\, b \quad \Leftrightarrow \quad \omega(a) \,\subseteq\, \omega(b).$$
We say that \emph{$a$ and $b$ are comparable} if $a \leq_{*}b$ or $b \leq_{*} a$. Recall that a \emph{total order} is a partial order under which every pair of elements is comparable, and a \emph{chain} of $G$ is a subset $H$ of $G$ that is a totally ordered set. Note that a set with one element is always a chain.

Let $C_{1}, \cdots, C_{K}$ be the chains of $G$. Observe that $K \leq n+1$ and $G = \bigcup_{j=1}^{K} C_{j}$. In each chain $C_{j}$ there exists a minimal element $z_{j}$, that is, $z_{j} \leq_{*} x$ for every $x \in C_{j}$. Therefore,
$$\bigcap_{x \,\in \,G} \,\omega(x) \,=\, \bigcap_{j=1}^{K} \,\omega(z_{j}).$$
We are left to show that $\bigcap_{j=1}^{K} \,\omega(z_{j}) \neq \emptyset.$

Suppose that $K < n+1$. Then $\{z_{1},\cdots, z_{K}\}$ is a subset of $X$ such that $\bigcap_{j=1}^{K} \overline{\mathcal{O}_T(z_{j})} \neq \emptyset$, since
$$\emptyset \,\neq\,  \bigcap_{x \,\in\, G} \overline{\mathcal{O}_T(x)}  \,\subseteq\,  \bigcap_{j=1}^{K} \overline{\mathcal{O}_T(z_{j})}.$$
By the hypothesis of induction, we conclude that
$$\emptyset \,\neq\, \bigcap_{j=1}^{K} \omega(z_{j}) \,\neq \,\emptyset.$$

If $K = n+1$, then every chain $C_j$ of $G$ contains only one element, say $z_j$. Consequently, $\mathcal{O}_T(z_{i}) \cap  \mathcal{O}_T(z_{j}) = \emptyset$ and $\mathcal{O}_T(z_{i}) \cap  \omega(z_{j}) = \emptyset$, for every $i \neq j \in \{1, \cdots, K\}$. As $\overline{\mathcal{O}_T(x)} = \mathcal{O}_T(x) \cup \omega(x)$ for any $x$ in $X$, we obtain
$$\emptyset \,\neq\,  \bigcap_{x \,\in\, G} \overline{\mathcal{O}_T(x)} \,=\, \bigcap_{j=1}^{K} \overline{\mathcal{O}_T(z_{j})} \, = \, \bigcap_{j=1}^{K} \omega(z_{j}).$$
This completes the proof of Lemma \ref{lem_top}.
\end{proof}

Let us resume the proof of the first part of Theorem~\ref{theorem-A}. Suppose that $\bigcap_{x \,\in\, \mathcal{C}\mathcal{I}(X,T)} \omega(x) = \emptyset$. By the compactness of $X$, there exist $x_{1},\cdots,x_{n} \in \mathcal{C}\mathcal{I}(X,T)$ such that
$$\bigcap_{j=1}^{n} \omega(x_{j}) \,=\, \emptyset.$$
Yet, one has
$$\emptyset \, \neq \, \bigcap_{x \,\in\, \mathcal{C}\mathcal{I}(X,T)} \overline{\mathcal{O}_T(x)}  \,\subseteq \,\bigcap_{j=1 }^{n} \overline{\mathcal{O}_T(x_{j})}$$
hence, using Lemma~\ref{lem_top}, we must have $\bigcap_{j=1 }^{n} \omega(x_{j}) \neq \emptyset$. This contradiction shows that $\bigcap_{x \,\in \,\mathcal{C}\mathcal{I}(X,T)} \omega(x) \neq \emptyset$.

\medskip

\noindent $(b)$ Take $x,y \in \mathcal{C}\mathcal{I}(X,T)$, and suppose, by contradiction, that there exist $\mu$ and $ \nu \in V_{T}(y)$ such that $\supp(\mu) \cap \omega(x) = \emptyset $ and $\supp(\nu) \cap \omega(x) = \emptyset $. Since $\mu$ and $\nu$ are  distinct invariant measures, there exists a continuous map $\varphi\colon X \to \mathbb{R}$ such that $\int \varphi \,d\mu \neq \int \varphi \,d\nu$. Thus, for some $\delta > 0$, one has
$$\supp(\mu) \cap \omega(x)^{\delta} \,= \,\emptyset \quad \quad \text{and} \quad \quad \supp(\nu) \cap \omega(x)^{\delta} \,=\, \emptyset.$$
Take a continuous function $\xi\colon X \to \mathbb{R}$ such that
$$\xi_{\mid_{\omega(x)^{\frac{\delta}{2}} }} \, = \,0 \quad \quad \text{ and } \quad \quad \xi_{\mid_{X \setminus \omega(x)^{\frac{3}{4}\delta} }} \, = \,1$$
and consider the continuous map $\psi\colon X \to \mathbb{R}$ defined by $\psi(x) = \varphi(x)\,\xi(x)$.

\begin{claim} $y \in \mathcal{I}(\psi,T)$.
\end{claim}

Indeed, one has
\begin{eqnarray*}
\int \psi \,d\mu  &=& \int_{\supp(\mu)} \psi \,d\mu  \,= \,\int_{\supp(\mu)} \varphi \,d\mu  \,=\, \int \varphi \,d\mu \\
&\neq& \int \varphi \,d\nu \,=\, \int_{\supp(\nu)} \varphi \,d\nu \,=\, \int_{\supp(\nu)} \psi \,d\nu  \,=\, \int \psi \,d\nu.
\end{eqnarray*}
%so $\int \psi \,d\mu \neq  \int \psi \,d\nu$.
Taking into account that $\mu, \nu \in V_{T}(y)$, we conclude that $y \in \mathcal{I}(\psi,T)$. %The claim is proved.

We note that this claim ensures that $\psi \in \mathcal{H}(X,T)$. Therefore, $x \in \mathcal{I}(\psi,T)$ as well, since $x \in \mathcal{C}\mathcal{I}(X,T)$. Yet, $\psi_{\mid_{\omega(x)^{\delta/2} }} \, = \,0$, which implies that $x \notin \mathcal{I}(\psi,T)$. This contradiction indicates that we cannot have two such measures $\mu$ and $\nu$.

\medskip

\noindent $(c)$ Take $x,y \in \mathcal{C}\mathcal{I}(X,T)$ and $\mu, \nu \in V_{T}(y)$, and suppose, by contradiction, that
$$\supp(\mu) \cap  \Big(X \setminus \omega(x) \Big) \,  \neq \,\supp(\nu) \cap  \Big(X \setminus \omega(x) \Big).$$
Take $a \in X$ such that $a \in \supp(\mu) \cap  \left(X \setminus \omega(x) \right)$ and $a \notin \supp(\nu)$. Then there exists $\delta>0$ such that
$$B(a,2\delta) \cap \omega(x)^{\delta} \,=\, \emptyset \quad \quad \text{and} \quad \quad B(a,2\delta) \cap \supp(\nu)^{\delta} = \emptyset.$$
Since $\mu$ and $\nu$ are  distinct invariant measures, there exists a continuous map $\varphi\colon X \to \mathbb{R}$ such that $\varphi > 1$ and $\int \varphi d\mu \neq \int \varphi d\nu$. Take a continuous function $\xi\colon X \to [0,1]$ such that
$$\xi_{\mid_{ X \setminus \left(B(a,2\delta) \right) }} \, = \,0 \quad \quad \text{and} \quad \quad \xi_{\mid_{B(a,\delta) }} \, = \, 1 $$
and consider the continuous map $\psi\colon X \to \mathbb{R}$ defined by $\psi(x) = \varphi(x)\,\xi(x)$.

\begin{claim} $y \in \mathcal{I}(\psi,T)$.
\end{claim}

In fact, from
\begin{eqnarray*}
\int \psi \,d\nu  &=& \int_{\supp(\nu)} \psi \,d\nu \, =\, \int_{\supp(\nu)} \xi\, \varphi \,d\nu \,=\,0 \\
&<& \int_{B(a,\delta)} \varphi \, d\mu \,=\,  \int_{B(a,\delta)} \xi \,\varphi \, d\mu \, = \,\int_{B(a,\delta)} \psi \, d\mu \,\leq\, \int \psi \,d\mu
\end{eqnarray*}
we deduce that $\int \psi \,d\mu \neq  \int \psi \,d\nu$, and so $y \in \mathcal{I}(\psi,T)$ because $\mu, \nu \in V_{T}(y)$.
%The claim is proved.

The claim yields that $\psi \in \mathcal{H}(X,T)$ and so $x \in \mathcal{I}(\psi,T)$ too, since $x \in \mathcal{C}\mathcal{I}(X,T)$. However, $\psi_{\mid_{\omega(x)^{\delta} }} \, = \,0$, which prevents $x$ to be in $\mathcal{I}(\psi,T)$. Thus the point $a$ cannot exist.

\medskip

\noindent $(d)$ Take $x_{0} \in \mathcal{C}\mathcal{I}(X,T)$. We are looking for a $T$-invariant set $\mathbb{B}_{x_0} \subseteq \overline{\mathcal{O}_T(x_0)}$ such that $\mathbb{B}_{x_0}$ is Baire generic in $\overline{\mathcal{O}_T(x_0)}$ and $x_0 \in \mathbb{B}_{x} \subseteq \mathcal{C}\mathcal{I}(X,T)$.

If  $\overline{\mathcal{O}_T(x_0)} = X$, take $\mathbb{B}_{x_0} = \mathcal{C}\mathcal{I}(X,T)$. Since $x_{0}$ is a point with dense orbit in $X$, and applying \cite[Corollary D]{CCSV21}, we deduce that $\mathcal{I}(T,\psi)$ is Baire generic in $X$ for every $\psi \in \mathcal{H}(X,T)$. Thus, $\mathcal{H}(X,T) = \mathcal{R}(X,T)$. Consequently, by \cite[Proposition 7.2]{CCSV21}, the set $\mathbb{B}_{x_{0}}$ is Baire generic in $X$.

If  $\overline{\mathcal{O}_T(x_0)} \neq X$, consider $Y =\overline{\mathcal{O}_T(x_{0})}$ and $S = T_{\mid_{Y}} \colon Y \to Y$.

\begin{claim} $x_{0} \in \mathcal{I}(S,\psi)$ for every $\psi \in \mathcal{H}(Y,S)$.
\end{claim}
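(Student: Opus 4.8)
The plan is to transfer the claim on the subsystem $(Y,S)$ back to the hypothesis $x_0 \in \mathcal{C}\mathcal{I}(X,T)$ by extending $\psi$ to all of $X$. Observe first that $Y = \overline{\mathcal{O}_T(x_0)}$ is a nonempty closed subset of the compact space $X$, hence compact, and it is forward $T$-invariant by continuity of $T$; thus $S = T_{\mid_Y}\colon Y \to Y$ is a well-defined continuous self-map of the compact metric space $(Y,d)$ and the notions $\mathcal{H}(Y,S)$ and $\mathcal{I}(S,\psi)$ make sense. The key elementary observation is that the $T$-orbit of any point of $Y$ stays in $Y$ and $S^j = T^j$ there, so for every $w \in Y$ and every $n$ one has
$$\frac{1}{n}\sum_{j=0}^{n-1}\widehat{\psi}(T^j(w)) \,=\, \frac{1}{n}\sum_{j=0}^{n-1}\psi(S^j(w))$$
for any continuous $\widehat{\psi}\colon X \to \mathbb{R}$ with $\widehat{\psi}_{\mid_Y} = \psi$.

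\textbf{Main steps.} Fix $\psi \in \mathcal{H}(Y,S)$. By definition there is $z \in Y$ for which $\big(\frac{1}{n}\sum_{j=0}^{n-1}\psi(S^j(z))\big)_{n}$ does not converge. Using the Tietze extension theorem, choose $\widehat{\psi} \in C^0(X,\mathbb{R})$ with $\widehat{\psi}_{\mid_Y} = \psi$. Applying the displayed identity with $w = z$, the Birkhoff averages of $\widehat{\psi}$ along the $T$-orbit of $z$ diverge, so $z \in \mathcal{I}(T,\widehat{\psi})$ and hence $\widehat{\psi} \in \mathcal{H}(X,T)$. Since $x_0 \in \mathcal{C}\mathcal{I}(X,T) = \bigcap_{\varphi \in \mathcal{H}(X,T)} \mathcal{I}(T,\varphi)$, we conclude $x_0 \in \mathcal{I}(T,\widehat{\psi})$, that is, $\big(\frac{1}{n}\sum_{j=0}^{n-1}\widehat{\psi}(T^j(x_0))\big)_{n}$ does not converge. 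Now apply the displayed identity once more, this time with $w = x_0 \in Y$, to rewrite this sequence as $\big(\frac{1}{n}\sum_{j=0}^{n-1}\psi(S^j(x_0))\big)_{n}$; its divergence is precisely the statement $x_0 \in \mathcal{I}(S,\psi)$. As $\psi \in \mathcal{H}(Y,S)$ was arbitrary, the claim follows.

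\textbf{Obstacle.} There is no real obstacle here: the argument is bookkeeping around two facts — that orbits of points in $Y$ never leave $Y$, so $S$-averages and $T$-averages of an extension of $\psi$ coincide on $Y$, and that $Y$ inherits compactness from $X$, so the definitions of $\mathcal{H}(\cdot,\cdot)$ and $\mathcal{I}(\cdot,\cdot)$ apply verbatim to $(Y,S)$. The only point requiring a word of care is to record, before quoting the hypothesis, that the extended function $\widehat{\psi}$ really belongs to $\mathcal{H}(X,T)$, which is exactly what the witness point $z$ provides.
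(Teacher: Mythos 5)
Your proof is correct and follows essentially the same route as the paper's: extend $\psi$ continuously to $X$, use the witness of $\mathcal{I}(S,\psi)\neq\emptyset$ to conclude the extension lies in $\mathcal{H}(X,T)$, invoke $x_0\in\mathcal{C}\mathcal{I}(X,T)$, and transfer back via the coincidence of the $S$- and $T$-averages at points of $Y$. The only cosmetic difference is that the paper chooses the extension to vanish off a $\delta$-neighborhood of $Y$, which is immaterial for this claim.
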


Indeed, consider $\psi \in  \mathcal{H}(Y,S)$. There exist a continuous function $\varphi\colon X \to \mathbb{R}$ and $\delta >0$ such that
$$\varphi_{\mid_{Y}} \,=\, \psi \quad \quad \text{ and } \quad \quad \varphi_{\mid_{X \,\setminus \, Y^{\delta}}} \, = \,0.$$
As $\mathcal{I}(S,\psi) \neq \emptyset$, one has $\mathcal{I}(T,\varphi) \neq \emptyset$, and so $\varphi \in \mathcal{H}(X,T)$. Therefore, $x_{0} \in \mathcal{I}(T, \varphi)$, since $x_{0} \in \mathcal{C}\mathcal{I}(X,T)$. Moreover, as $x_{0} \in  Y = \overline{\mathcal{O}_T(x_{0})}$, we conclude that $x_{0} \in  \mathcal{I}(S,\psi)$.

Define $\mathbb{B}_{x_{0}} = \bigcap_{\psi \,\in\, \mathcal{H}(Y,S)} \mathcal{I}(S,\psi)$. By the the previous claim, we know that $x_{0} \in \mathbb{B}_{x_{0}}$. Combining this property with fact that $x_{0}$ is a point with dense orbit in $Y$, and applying \cite[Corollary D]{CCSV21}, we deduce that $\mathcal{I}(S,\psi)$ is Baire generic in $Y$ for every $\psi \in \mathcal{H}(Y,S)$. Thus, $\mathcal{H}(Y,S) = \mathcal{R}(Y,S)$. Consequently, by \cite[Proposition 7.2]{CCSV21}, the set $\mathbb{B}_{x_{0}}$ is Baire generic in $Y$.

We are left to show that $\mathbb{B}_{x_{0}} \subseteq  \mathcal{C}\mathcal{I}(X,T)$. Take $z \in \mathbb{B}_{x_{0}}$ and $\varphi \in \mathcal{H}(X,T)$. Since $\mathbb{B}_{x_{0}} \subseteq  \overline{\mathcal{O}_T(x_{0})}$, we know that $$\overline{\mathcal{O}_T(z)} \,\subseteq\,  \overline{\mathcal{O}_T(x_{0})} \, = \,Y.$$
Define $\psi = \varphi_{\mid_{Y}} \colon Y \to \mathbb{R}$. By assumption, $x_{0} \in \mathcal{C}\mathcal{I}(X,T)$, hence $x_{0} \in \mathcal{I}(T,\varphi)$. Thus, as $x_{0} \in Y=\overline{\mathcal{O}(x_{0})}$, one has $x_{0} \in  \mathcal{I}(S,\psi)$. Consequently, $\psi \in \mathcal{H}(Y,S)$, and so $z \in  \mathcal{I}(S,\psi)$ since $z \in \mathbb{B}_{x_{0}}$. Therefore, $z \in  \mathcal{I}(T,\varphi)$, confirming that $\mathbb{B}_{x_{0}} \subseteq  \mathcal{C}\mathcal{I}(X,T)$. The proof of Theorem~\ref{theorem-A} is complete.

\section{Proof of Theorem~\ref{theorem-B}}\label{se:compact}

Let $(X,d)$ be a compact metric space. Recall that, given $x \in X$,
%denote by $\delta_{x}$ the Dirac measure supported at $x$ and by
$V_T(x) \subset\mathcal{P}_T(X)$ stands for the set of accumulation points, in the weak$^*$ topology, of the sequence of empirical measures $\big(\frac{1}{n}\,\sum_{j=0}^{n-1}\,\delta_{T^{j}x}\big)_{n \, \in \, \mathbb{N}}$.
It is known that $V_{T}(x)$ is connected and compact (cf. \cite[Proposition 3.8]{DGS76}). Therefore, given $\varphi \in C^0(X,\mathbb{R})$ and $x \in X$, the set $W_{\varphi}(x)$ is a compact interval of the real line. Indeed, the map
\begin{eqnarray*}
\mathcal{F}_{x,\,\varphi} \colon \quad V_{T}(x) & \to & W_{\varphi}(x)\\
\mu & \mapsto & \int \varphi \,d\mu
\end{eqnarray*}
is surjective and continuous, thus $W_{\varphi}(x)$ is a compact interval, namely
$$W_{\varphi}(x) \,=\, \Big[\liminf_{n\, \to \, +\infty}\, \frac{1}{n} \sum_{j=0}^{n-1} \,\varphi(T^{j}(x)),\,\,\limsup_{n\, \to \, +\infty}\, \frac{1}{n} \,\sum_{j=0}^{n-1} \,\varphi(T^{j}(x))\Big].$$
Clearly, $\# V_{T} (x) = 1$ if and only if $\# W_{\varphi}(x) = 1$ for every $\varphi \in C^0(X,\mathbb{R})$.

Let $\mathcal{D} = \{\varphi_{j}\colon j \in \mathbb{N} \cup \{0\}\}$ be a countable dense subset of $C^0(X,\mathbb{R})$ such that $\|\varphi_j\| > 0$ for every $j \in\mathbb{N} \cup \{0\}$. Given $\mu_{1}, \mu_{2} \in \mathcal{P}(X)$, define the metric $\rho$ on $\mathcal{P}(X)$ by
$$\rho(\mu_{1},\,\mu_{2})\, = \, \sum_{j=0}^{+\infty}\,\frac{\Big|\int \varphi_j\mathrm{d\mu_{1}}-\int \varphi_j\mathrm{d\mu_{2}}\Big|}{2^j\,\|\varphi_j\|}.$$
For each $j \in\mathbb{N} \cup \{0\}$, consider the map $f_j = \varphi_j/\|\varphi_j\|$. Then $\|f_j\|=1$ and
$$\rho(\mu_{1},\,\mu_{2}) = \sum_{j=0}^{+\infty}\,\frac{\Big|\int f_j\mathrm{d\mu_{1}}-\int f_j\mathrm{d\mu_{2}}\Big|}{2^j}.$$
Denote by $B_\rho(\nu, \varepsilon)$ the ball in the distance $\rho$ centered at $\nu$ with radius $\varepsilon$. The next result concerns the size of $V_{T} (x)$ and was inspired by \cite{HLT21v2}.

\begin{theorem}\label{theoremHLT_mod}
Let $(X,d)$ be a compact metric space without isolated points, $T\colon X \to X$ be a continuous function and $K$ be a nonempty subset (not necessarily compact) of $\mathcal{P}_{T}(X)$. Assume that there exists a dense subset $L$ of $K$ such that one of the following conditions holds:
\begin{itemize}
\item[$(a)$]  $\big\{x\in X\colon \,V_T(x) \cap B_\rho(\nu, \varepsilon) \neq \emptyset \big\}$ is dense in $X$ for every $\varepsilon >0$ and every $\nu \in L$.
\smallskip
\item[$(b)$]  $\big\{x\in X\colon \, L \subset V_T(x)\big\}$ is dense in $X$.
\end{itemize}
Then $\big\{x\in X\colon \, K \subset V_T(x)\big\}$ is Baire generic in $X$.
\end{theorem}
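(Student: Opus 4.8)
The plan is to derive the conclusion from the Baire category theorem after two reductions: replace $K$ by a countable subset, and for a single measure $\nu$ exhibit $\{x:\nu\in V_T(x)\}$ as an explicit dense $G_\delta$ set. Throughout write $E_n(x):=\frac1n\sum_{j=0}^{n-1}\delta_{T^{j}(x)}$ for the $n$-th empirical measure of $x$; since $T$ is continuous, $x\mapsto E_n(x)$ is weak$^*$-continuous, hence $x\mapsto\rho(E_n(x),\nu)$ is continuous for every $\nu\in\mathcal P(X)$. I will use freely that $V_T(x)$ is a nonempty compact (hence closed) subset of $\mathcal P_T(X)$, and that a compact metric space is a Baire space.

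\textbf{The two reductions.} Because $\mathcal P(X)$ is a compact, hence separable, metric space, the subspace $K$ is separable, so one can fix a countable set $L_0=\{\nu_k:k\in\mathbb N\}\subseteq L$ which is dense in $L$, and therefore dense in $K$. Since $V_T(x)$ is closed, ``$K\subseteq V_T(x)$'' holds iff $\overline{L_0}=\overline K\subseteq V_T(x)$, iff $L_0\subseteq V_T(x)$; hence
\[
\{x\in X:K\subseteq V_T(x)\}=\bigcap_{k\in\mathbb N}\{x\in X:\nu_k\in V_T(x)\}.
\]
Next, for a fixed $\nu$, a routine diagonal argument unwinding the definition of a subsequential limit yields
\[
\{x\in X:\nu\in V_T(x)\}=\bigcap_{m\in\mathbb N}G_m(\nu),\qquad G_m(\nu):=\bigcap_{N\in\mathbb N}\ \bigcup_{n\geq N}\{x\in X:\rho(E_n(x),\nu)<1/m\}.
\]
By continuity of $x\mapsto\rho(E_n(x),\nu)$, each $G_m(\nu)$ is $G_\delta$. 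So it suffices to prove that every $G_m(\nu_k)$ is dense in $X$: then each $\{x:\nu_k\in V_T(x)\}$ is a dense $G_\delta$, and since $X$ is a Baire space the countable intersection over $k$, which equals $\{x:K\subseteq V_T(x)\}$, is Baire generic.

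\textbf{Density of the $G_m(\nu_k)$.} Under hypothesis $(b)$ this is immediate: since $\nu_k\in L$, the dense set $\{x:L\subseteq V_T(x)\}$ is contained in $\{x:\nu_k\in V_T(x)\}=\bigcap_mG_m(\nu_k)\subseteq G_m(\nu_k)$. Under hypothesis $(a)$, fix $k$ and put $O_\varepsilon:=\{x\in X:V_T(x)\cap B_\rho(\nu_k,\varepsilon)\neq\emptyset\}$, which (as $\nu_k\in L$) is dense for every $\varepsilon>0$. I expect the only genuinely delicate point of the whole proof to be here: $O_\varepsilon$ is an ``$\exists\,\mu\in V_T(x)$'' condition and is not visibly $G_\delta$, so the Baire theorem cannot be applied to $\bigcap_mO_{1/m}$ directly. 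The remedy is to sandwich it between the honest $G_\delta$ sets $G_m(\nu_k)$: one checks
\[
O_\varepsilon\subseteq G_m(\nu_k)\ \text{ whenever }\ \tfrac1m\geq\varepsilon,\qquad\text{and}\qquad G_m(\nu_k)\subseteq O_{\varepsilon'}\ \text{ whenever }\ \varepsilon'>\tfrac1m,
\]
the first because a subsequence $E_{n_i}(x)\to\mu\in B_\rho(\nu_k,\varepsilon)$ forces $\rho(E_{n_i}(x),\nu_k)<\varepsilon\leq\tfrac1m$ for large $i$, and the second because any subsequential limit of $E_n(x)$'s lying in $B_\rho(\nu_k,\tfrac1m)$ belongs to $V_T(x)\cap\overline{B_\rho(\nu_k,\tfrac1m)}\subseteq V_T(x)\cap B_\rho(\nu_k,\varepsilon')$. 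The first inclusion together with the density of $O_{1/m}$ gives that $G_m(\nu_k)$ is dense; the interleaving also reconfirms $\bigcap_mG_m(\nu_k)=\bigcap_mO_{1/m}=\{x:\nu_k\in V_T(x)\}$, where the closedness of $V_T(x)$ enters the last equality.

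\textbf{Conclusion.} In both cases every $G_m(\nu_k)$ is a dense $G_\delta$ subset of the Baire space $X$, so each $\{x:\nu_k\in V_T(x)\}=\bigcap_mG_m(\nu_k)$ is a dense $G_\delta$, and finally $\{x:K\subseteq V_T(x)\}=\bigcap_k\{x:\nu_k\in V_T(x)\}$ is a countable intersection of dense $G_\delta$ sets, hence Baire generic in $X$. Apart from the case-$(a)$ sandwiching flagged above — the passage from the ``one invariant limit near $\nu_k$'' hypothesis to a genuinely $G_\delta$ and dense condition — everything is bookkeeping with countable intersections and the Baire category theorem.
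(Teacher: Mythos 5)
Your proof is correct and follows essentially the same route as the paper's: both arguments are Baire category arguments built on the open sets $\bigcup_{n\geq N}\{x\in X:\ \frac1n\sum_{j=0}^{n-1}\delta_{T^j(x)}\in B\}$ for balls $B$ centered near points of a countable dense subset of $K$, with density of these sets deduced from hypothesis $(a)$ or $(b)$ exactly as you do. The only (cosmetic) difference is the bookkeeping: the paper first replaces $K$ by $\overline K$ and chooses a countable family of nested balls $V_i\subset U_i$ with $\operatorname{diam}(U_i)\to0$ covering $\overline K$ so that each point of $K$ lies in infinitely many $V_i$, whereas you work directly with balls $B_\rho(\nu_k,1/m)$ around a countable dense subset $\{\nu_k\}$ and invoke the closedness of $V_T(x)$ at the end; both devices serve the same purpose.
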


\begin{proof}\label{se:proof-HLT}

Let $(X,d)$ be a compact metric space without isolated points, $T\colon X \to X$ be a continuous function and $K$ be a nonempty subset (not necessarily compact) of $\mathcal{P}_{T}(X)$. Note that $V_{T}(x) = V_{T}(T^{n}x)$ for every $n \in \mathbb{N}$ and $x \in X$. By \cite[Lemma 2.2]{HLT21v2}, one has
$$\big\{x\in X \colon \, K \subset V_T(x)\big\} \,=\, \{x\in X\colon \, \overline{K} \subset V_T(x)\big\}.$$
So, without loss of generality, we may assume that $K$ is a nonempty compact subset of $\mathcal{P}_{T}(X)$. Therefore, there exist sequences $\{U_i\}_{i\,\in\,\mathbb{N}}$ and $\{V_i\}_{i\,\in\,\mathbb{N}}$ of open balls in $\mathcal{P}(X)$, with respect to the metric $\rho$, such that:
\begin{itemize}
\item $V_i \subset \overline{V}_i \subset U_i$;
\smallskip
\item $\lim_{i \, \to \, +\infty} \,\diam(U_i) \,=\, 0$;
\smallskip
\item  $V_i\cap K \neq \emptyset$;
\smallskip
\item Each point of $K$ lies in infinitely many elements of the sequence $\{V_i\}_{i\,\in\,\mathbb{N}}$.
\end{itemize}

Define $\mathcal{V}(U_i) = \big\{x\in X\colon \, V_T(x) \cap U_i \neq \emptyset\big\}$. Then
$$\bigcap_{i=1}^{+\infty}\,\mathcal{V}(U_i) \,=\,  \big\{x\in X\colon \, K \subset V_T(x)\big\}$$
and
$$\bigcap_{N=1}^{+\infty} \,\bigcup_{n\,>\,N} \, \Big\{x \in X \colon \,\, \frac{1}{n} \sum_{j=0}^{n-1} \,\delta_{T^{j} x} \in V_{i}\Big\} \,\subset \, \mathcal{V}\left(U_{i}\right).$$
For each $N \in \mathbb{N}$ and $i \in \mathbb{N}$, set
$$U(N,i) \, =\, \bigcup_{n \,> \,N} \, \Big\{x \in X \colon \, \frac{1}{n} \sum_{j=0}^{n-1} \,\delta_{T^{j} x} \in V_{i}\Big\}.$$
Then $U(N,i)$ is an open subset of $X$. Therefore, if under each of the conditions $(a)$ or $(b)$ of the statement of Theorem~\ref{theoremHLT_mod} we guarantee that $U(N,i)$ is dense in $X$ for every $N \in\mathbb{N}$ and $i\in\mathbb{N}$, then we are sure that the set $\big\{x\in X \colon \, K \subset V_T(x)\big\}$ is Baire generic in $X$.

\medskip

\noindent $(a)$ Assume that there exists a dense subset $L$ of $K$ such that $\big\{x\in X\colon \,V_T(x) \cap B_\rho(\nu, \varepsilon) \neq \emptyset \big\}$ is dense in $X$ for every $\varepsilon >0$ and every $\nu \in L$. Fix $N, i \in \mathbb{N}$. As $L$ is dense in $K$, $K \cap V_{i} \neq \emptyset$ and this is an open subset of $K$, then we may find $\nu \in L$ such that $\nu \in K \cap V_{i}$. Choose $\varepsilon>0$ such that $B_\rho(\nu, \varepsilon) \subseteq V_{i}$. Then
$$\big\{x\in X\colon \, V_T(x) \cap B_\rho(\nu, \varepsilon) \neq \emptyset \big\} \,\subseteq\,  U(N,i)$$
and so $U(N,i)$ is dense in $X$.

\medskip

\noindent $(b)$  Suppose now that $\big\{x\in X\colon \, L \subset V_T(x)\big\}$ is dense in $X$. Given $N, i \in \mathbb{N}$, as $L$ is dense in $K$, $K \cap V_{i} \neq \emptyset$ and this is an open set of $K$, then there exists $\nu \in L$ such that $\nu \in K \cap V_{i}$. Choose $\varepsilon>0$ such that $B_\rho(\nu, \varepsilon) \subseteq V_{i}$. Then
$$\big\{x\in X\colon \, L \subset V_T(x)\big\} \,\subseteq\, \big\{x\in X\colon \, \nu \in V_T(x)\big\}$$
and so the latter set is also dense in $X$. Taking into account that
$$\big\{x\in X\colon \, \nu \in  V_T(x)\big \}\,\subseteq\,  U(N,i)$$
we again conclude that $U(N,i)$ is dense in $X$.

\end{proof}

Using Theorem~\ref{theoremHLT_mod}, we generalize \cite[Theorem D(3)]{HLT21v2} as follows.
%The following result is proved in \cite[Theorem D(3)]{HLT21v2} without this statement. We include the proof for completeness.
%\begin{corollary}\label{HLTcorollary}
%\end{corollary}

\begin{corollary}\label{cor:ergodic}
Let $(X,d)$ be a compact metric space without isolated points and $T\colon X \to X$ be a continuous map. Then:
\begin{itemize}
\item[$(a)$] If $K$ is a nonempty subset (not necessarily compact) of $\mathcal{P}_{T}(X)$ and
$$\big\{x\in X\colon \, K \subset V_T(x)\big\} \, \neq \, \emptyset$$
then for every $x_{0} \in \big\{x\in X\colon K \subset V_T(x)\big\}$ one has
$$\overline{\bigcup_{\mu \,\in\, K} \supp \mu} \,\subseteq\, \omega(x_{0}) \,\subseteq\, \overline{\big\{x\in X\colon\, K \subset V_T(x)\big\}}.$$
\item[$(b)$] If $\mathcal{M}(X,T) = X$ and $\big\{x\in X\colon\, V_T(x) = \mathcal{P}_{T}(X)\big\}$ is nonempty, then $T$ is transitive and $\big\{x\in X\colon\, V_T(x) = \mathcal{P}_{T}(X)\big\}$ is Baire generic in $X$.
\medskip
\item[$(c)$] If $T$ is not uniquely ergodic and $\big\{x\in X\colon \, V_T(x) = \mathcal{P}_{T}(X)\big\}$ is Baire generic in $X$, then $\bigcap_{\varphi \,\in \,\mathcal{H}(X,T)} \mathcal{I}(T,\varphi)$ is Baire generic as well.
\medskip
\item[(d)] If $\mathcal{M}(X,T) = X$ and $\mathcal{P}_{T}^{erg}(X)$ is dense in $\mathcal{P}_{T}(X)$, then
\smallskip
\begin{itemize}
\item[(i)] $\big\{x\in X\colon \, V_T(x) = \mathcal{P}_{T}(X)\big\}$ is Baire generic in $X$.
\smallskip
\item[(ii)] $T$ is transitive.
\end{itemize}
\end{itemize}
\end{corollary}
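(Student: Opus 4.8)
We establish the four items in turn: $(a)$ is elementary, $(b)$ and $(c)$ reduce quickly to Theorem~\ref{theoremHLT_mod} together with $(a)$, and $(d)$ carries the real content.

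\medskip

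\textbf{Item $(a)$.} Fix $x_0$ with $K\subset V_T(x_0)$. For the first inclusion it is enough to recall that $\supp\mu\subseteq\omega(x_0)$ whenever $\mu\in V_T(x_0)$: if $z\in\supp\mu$ and $U$ is an open neighbourhood of $z$, then $\mu(U)>0$, so along any subsequence of empirical measures converging to $\mu$ the Portmanteau inequality forces $\liminf\frac1n\#\{0\le j<n:T^j(x_0)\in U\}\ge\mu(U)>0$; hence $T^j(x_0)\in U$ for infinitely many $j$, that is, $z\in\omega(x_0)$. Since $\omega(x_0)$ is closed, $\overline{\bigcup_{\mu\in K}\supp\mu}\subseteq\omega(x_0)$. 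The second inclusion follows from $V_T(T^n x_0)=V_T(x_0)$: this gives $\mathcal{O}_T(x_0)\subseteq\{x\in X:K\subset V_T(x)\}$, whence $\omega(x_0)\subseteq\overline{\mathcal{O}_T(x_0)}\subseteq\overline{\{x\in X:K\subset V_T(x)\}}$.

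\medskip

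\textbf{Items $(b)$ and $(c)$.} For $(b)$, pick $x_0$ with $V_T(x_0)=\mathcal{P}_T(X)$ and apply $(a)$ with $K=\mathcal{P}_T(X)$: then $X=\mathcal{M}(X,T)=\overline{\bigcup_{\mu\in\mathcal{P}_T(X)}\supp\mu}\subseteq\omega(x_0)\subseteq\overline{\{x:V_T(x)=\mathcal{P}_T(X)\}}$, so $\overline{\mathcal{O}_T(x_0)}=X$ — hence $T$ is transitive, as $X$ is compact without isolated points — and $\{x:V_T(x)=\mathcal{P}_T(X)\}$ is dense in $X$. Since $\{x:\mathcal{P}_T(X)\subset V_T(x)\}=\{x:V_T(x)=\mathcal{P}_T(X)\}$, Theorem~\ref{theoremHLT_mod} applied with $K=L=\mathcal{P}_T(X)$ (whose condition $(b)$ is precisely the density just obtained) upgrades this to Baire genericity. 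For $(c)$, put $G=\{x:V_T(x)=\mathcal{P}_T(X)\}$. Given $\varphi\in\mathcal{H}(X,T)$, some point has non-degenerate $W_\varphi$; as $W_\varphi(\cdot)$ is the image of $V_T(\cdot)$ under $\mu\mapsto\int\varphi\,d\mu$, there are $\mu_\varphi,\nu_\varphi\in\mathcal{P}_T(X)$ with $\int\varphi\,d\mu_\varphi\neq\int\varphi\,d\nu_\varphi$; for every $x\in G$ both measures lie in $V_T(x)$, so $W_\varphi(x)$ is non-degenerate and $x\in\mathcal{I}(T,\varphi)$. Hence $G\subseteq\bigcap_{\varphi\in\mathcal{H}(X,T)}\mathcal{I}(T,\varphi)$, and since $G$ is Baire generic so is the intersection.

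\medskip

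\textbf{Item $(d)$.} For $(i)$ the plan is to apply Theorem~\ref{theoremHLT_mod} with $K=\mathcal{P}_T(X)$ and $L=\mathcal{P}_T^{erg}(X)$, which is dense in $K$ by hypothesis, checking its condition $(a)$. Fix $\nu\in\mathcal{P}_T^{erg}(X)$, $\varepsilon>0$ and a nonempty open $U\subseteq X$; we seek $x\in U$ with $V_T(x)\cap B_\rho(\nu,\varepsilon)\neq\emptyset$. Since $\mathcal{M}(X,T)=X$, the set $U$ meets $\supp m$ for some $m\in\mathcal{P}_T(X)$, so $m(U)>0$. For $t>0$ small the invariant measure $\nu_t=(1-t)\nu+t\,m$ has $\rho(\nu,\nu_t)<\varepsilon/2$ while $\nu_t(U)\ge t\,m(U)>0$. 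Using the density of $\mathcal{P}_T^{erg}(X)$ in $\mathcal{P}_T(X)$, choose a sequence of ergodic measures converging weak$^*$ to $\nu_t$; by the Portmanteau theorem their $U$-masses satisfy $\liminf\ge\nu_t(U)>0$, so some ergodic $\nu'$ has simultaneously $\nu'(U)>0$ and $\rho(\nu',\nu)<\varepsilon$. By Birkhoff's ergodic theorem the set of $\nu'$-generic points has full $\nu'$-measure and so meets $U$; any $x$ there satisfies $V_T(x)=\{\nu'\}\subseteq B_\rho(\nu,\varepsilon)$. Theorem~\ref{theoremHLT_mod} then yields that $\{x:V_T(x)=\mathcal{P}_T(X)\}$ is Baire generic, which is $(d)(i)$; in particular it is nonempty, and then $(d)(ii)$ follows from item $(b)$.

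\medskip

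\textbf{Expected main obstacle.} The one place where care is needed is the last step of $(d)(i)$: replacing $\nu_t$ by a genuinely ergodic measure $\nu'$ that still charges $U$. Weak$^*$ proximity does not control the mass of an open set, so one cannot just take a single nearby ergodic measure; what makes the argument work is the lower semicontinuity of $\sigma\mapsto\sigma(U)$ along a convergent sequence, combined with $\nu_t(U)>0$. Everything else — invariance of $V_T$ along orbits, the description of $W_\varphi$ as the image of $V_T$, the continuity of $t\mapsto\nu_t$, and Birkhoff's theorem — is routine.
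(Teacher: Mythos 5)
Your argument is correct throughout; items $(a)$, $(b)$ and the reduction of $(d)(ii)$ to $(b)$ coincide with the paper's proof, and your item $(c)$ simply unpacks the citation the paper makes to Tian's Lemma~4.4 (using that $W_\varphi(x)$ is the image of $V_T(x)$ under $\mu\mapsto\int\varphi\,d\mu$) into a short direct argument with the same content. The genuine divergence is in $(d)(i)$. The paper first upgrades the hypotheses to produce a Baire generic (hence dense) family $L=\{\eta\in\mathcal{P}_T(X):\supp\eta=X\}\cap\mathcal{P}_T^{erg}(X)$ of fully supported ergodic measures --- this costs two external facts, that $\mathcal{P}_T^{erg}(X)$ is generic once dense and that full-support measures are generic once one exists --- and then feeds $L$ into condition $(a)$ of Theorem~\ref{theoremHLT_mod} via denseness of the basins $\mathcal{B}(\eta)$. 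You instead take $L=\mathcal{P}_T^{erg}(X)$ itself and verify condition $(a)$ of Theorem~\ref{theoremHLT_mod} directly: given $\nu$ ergodic, $\varepsilon>0$ and an open $U$, you perturb $\nu$ convexly towards an invariant measure charging $U$, re-approximate by ergodic measures, and use lower semicontinuity of $\sigma\mapsto\sigma(U)$ (Portmanteau) to keep positive mass on $U$, finishing with Birkhoff generic points. Your route is more self-contained (it needs only the existence, not the genericity, of measures charging each open set, which is immediate from $\mathcal{M}(X,T)=X$), at the price of the extra perturbation step whose necessity you correctly flag; the paper's route is shorter on the page but leans on two genericity results from Denker--Grillenberger--Sigmund. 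Both are sound.
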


\begin{proof}\label{se:proof-corB}

\noindent $(a)$ Let $K$ be a nonempty subset (not necessarily compact) of $\mathcal{P}_{T}(X)$ such that
$$\big\{x\in X\colon \, K \subset V_T(x)\big\} \, \neq \, \emptyset.$$
Consider $x_{0} \in \big\{x\in X\colon K \subset V_T(x)\big\}$ and $\mu \in K$. Then $\supp (\mu) \subseteq \omega(x_{0})$, and so
$$\overline{\bigcup_{\mu\, \in \, K} \supp \mu} \,\subseteq\, \omega (x_{0}).$$
Moreover,
$$\big\{T^{j}(x_{0})\colon \,j \in \mathbb{N} \cup \{0\} \big\} \,\subseteq\, \big\{x\in X\colon \, K \subseteq V_T(x)\big\}.$$
This implies that
$$\overline{\bigcup_{\mu\, \in \,K}\, \supp \mu} \,\subseteq \,\omega (x_{0}) \,\subseteq \,\overline{\big\{x\in X\colon K \subseteq V_T(x)\big\}}.$$

\medskip

\noindent $(b)$ Suppose that $\mathcal{M}(X,T) = X$ and $\big\{x\in X\colon \, V_T(x) = \mathcal{P}_{T}(X)\big\}\neq \emptyset$. Let $x_{0}$ be a point of $X$ such that $V_T(x_{0}) = \mathcal{P}_{T}(X)$. By item $(a)$ with $K = \mathcal{P}_{T}(X)$,
$$X \,=\, \mathcal{M}(X,T) \,=\,\overline{\bigcup_{\mu\, \in \,\mathcal{P}_{T}(X)} \,\supp \mu}\,\subseteq\, \omega (x_{0}) \, \subseteq\, \overline{\big\{x\in X \colon \, V_T(x) = \mathcal{P}_{T}(X) \big\}}.$$
These inclusions imply that $x_{0}$ has a dense orbit. Thus $T$ is transitive, since $X$ has no isolated points. Moreover, $\big\{x\in X\colon \,V_T(x) = \mathcal{P}_{T}(X) \big\}$ is dense in $X$. Applying item $(b)$ of Theorem~\ref{theoremHLT_mod}, we conclude that $\big\{x\in X \colon \, V_T(x) =  \mathcal{P}_{T}(X)\}$ is Baire generic in $X.$

\medskip

\noindent $(c)$ Suppose that $T$ is not uniquely ergodic and $\big\{x\in X\colon \, V_T(x) = \mathcal{P}_{T}(X)\big\}$ is Baire generic in
$X$. These assumptions ensure that $\mathcal{H}(X,T)$ is not empty. Using \cite[Lemma 4.4]{Tian}, we conclude that
$$\big\{x\in X\colon \, V_T(x) = \mathcal{P}_{T}(X)\big\}\, \subseteq\,  \bigcap_{\varphi \,\in \,\mathcal{H}(X,T)} \,\mathcal{I}(T,\varphi)$$
so the latter set is Baire generic as well.

\medskip

\noindent $(d)-(i)$ Assume that $\mathcal{M}(X,T) = X$ and $\mathcal{P}_{T}^{erg}(X)$ is dense in $\mathcal{P}_{T}(X)$. From \cite[Proposition 5.7]{DGS76}, we know that $\mathcal{P}_{T}^{erg}(X)$ is Baire generic in $\mathcal{P}_{T}(X)$. Moreover, by \cite[Lemma 5.1]{KOR2016}, there exists an invariant measure $\mu_0$ such that $\supp \mu_0 = \mathcal{M}(X,T) = X$. Using \cite[Proposition 21.11]{DGS76}, we deduce that the set $\big\{\eta \in \mathcal{P}_{T}(X)\colon \, \supp \eta = X\big\}$ is Baire generic in $\mathcal{P}_{T}(X)$. Therefore, the set $\big\{\eta\in \mathcal{P}_{T}(X) \colon \, \supp \eta = X\big\} \cap \mathcal{P}_{T}^{erg}(X)$ is also Baire generic in $\mathcal{P}_{T}(X)$.

Given $\mu \,\in \,\mathcal{P}_{T}(X)$, denote by
%the set of \emph{completely regular points} of $T$ by
%$$CReg(X,T) \,= \bigcup_{\mu \,\in \,\mathcal{P}_{T}(X)} \,\mathcal{B}(\mu)$$
$\mathcal{B}(\mu)$ the basin of attraction of $\mu$, defined by
\begin{equation*}\label{def:basin}
\mathcal{B}(\mu) \, = \, \Big\{x \in X \colon \lim_{n \,\to\, +\infty} \,\frac{1}{n}\,\sum_{j=0}^{n-1} \psi(T^{j}(x)) = \int \psi \,d \mu, \,\, \forall\, \psi \in C^0(X,\mathbb{R})\Big\}.
\end{equation*}
Recall that, if $\eta$ is a $T$-invariant ergodic probability measure such that $\supp \eta = X$, then its basin of attraction is dense in $X$. Thus, if we take the Baire generic set
$$L \,=\, \big\{\eta \in \mathcal{P}_{T}(X) \colon \, \supp \eta = X\big\} \cap \mathcal{P}_{T}^{erg}(X)$$
then, by item $(a)$ of Theorem \ref{theoremHLT_mod}, we conclude that $\big\{x\in X \colon\, V_T(x)= \mathcal{P}_{T}(X)\big\}$ is Baire generic in $X.$

\medskip

\noindent $(d)-(ii)$  We now apply item $(b)$ to infer that $T$ is transitive. The proof of Corollary~\ref{cor:ergodic} is complete.

\begin{remark}
It was already known that, if $\mathcal{P}_{T}^{erg}(X)$ is dense in $\mathcal{P}_{T}(X)$, then the restriction of $T$ to $\mathcal{M}(X,T)$ is transitive (cf. \cite[Proposition 7.2]{KK}).
\end{remark}

\end{proof}

We note that Theorem~\ref{theorem-B} is now a direct consequence of the previous items $(c)$ and $(d)$. %: if $T$ is not uniquely ergodic, $\mathcal{M}(X,T) = X$ and $\mathcal{P}_{T}^{erg}(X)$ is dense in $\mathcal{P}_{T}(X)$, then
%\begin{itemize}
%\item[(i)] $\big\{x\in X\colon \, V_T(x) = \mathcal{P}_{T}(X)\big\}$ is Baire generic in $X$.
%\smallskip
%\item[(ii)] $\bigcap_{\varphi \,\in \,\mathcal{H}(X,T)} \mathcal{I}(T,\varphi)$ is Baire generic in $X$.
%\smallskip
%\item[(iii)] $T$ is transitive.
%\end{itemize}

\subsection{Sensitivity to initial conditions}

In \cite[Theorem 2.2]{CCSV21}, it was proved that if $X$ is $(T,\varphi)$-sensitive (for the precise notion, see Definition~\ref{defTphisensitive} in Section~\ref{se:chaotic}) then either $T$ has sensitivity to initial conditions or $\mathcal{I}(T,\varphi)$ has nonempty interior.
%In particular, if $T$ has a dense set of periodic orbits and $X$ is $(T;\varphi)$-sensitive, then $T$ has sensitivity to initial conditions.
The next consequence of Theorem~\ref{theoremHLT_mod} provides a setting where the previous dichotomy is untied.

\begin{corollary}\label{maincorollary_sensensitive} Let $X$ be a compact metric space without isolated points, $T\colon X \to X$ be a continuous map whose measure center coincides with $X$ and $\varphi \in C^0(X,\mathbb R)$. If $X$ is $(T,\varphi)$-sensitive, then $T$ has sensitivity to initial conditions.
\end{corollary}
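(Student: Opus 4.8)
The plan is to deduce Corollary~\ref{maincorollary_sensensitive} from the dichotomy quoted from \cite[Theorem 2.2]{CCSV21} by ruling out the alternative that $\mathcal{I}(T,\varphi)$ has nonempty interior. Concretely, $(T,\varphi)$-sensitivity gives, via that theorem, that either $T$ has sensitivity to initial conditions --- in which case we are done --- or $\mathcal{I}(T,\varphi)$ contains a nonempty open set $U$. So the whole task reduces to showing that, under the standing hypothesis $\mathcal{M}(X,T)=X$, the set $\mathcal{I}(T,\varphi)$ cannot have interior; equivalently, its complement is dense. I expect the main obstacle to be precisely this density statement, which is where the hypothesis $\mathcal{M}(X,T)=X$ must be brought to bear.

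First I would recall that, because $\mathcal{M}(X,T)=X$, \cite[Lemma 5.1]{KOR2016} supplies a $T$-invariant probability measure $\mu_0$ with $\supp\mu_0=X$, and then \cite[Proposition 21.11]{DGS76} shows that the set $\mathcal{E}=\{\eta\in\mathcal{P}_T(X)\colon\supp\eta=X\}$ is Baire generic in $\mathcal{P}_T(X)$; in particular it is nonempty and, by Poincar\'e recurrence applied to such an $\eta$, one gets recurrent points, hence the set of points whose $V_T(x)$ contains a measure of full support is dense --- but what is actually needed is more elementary. Next I would invoke the ergodic decomposition: since $\mathcal{E}$ is a nonempty intersection of the dense $G_\delta$ sets of the Choquet simplex $\mathcal{P}_T(X)$, it meets the (residual, by \cite[Proposition 5.7]{DGS76}) set $\mathcal{P}_T^{erg}(X)$ whenever the latter is dense; however, to avoid assuming density of $\mathcal{P}_T^{erg}(X)$ here, I would instead argue directly: by the ergodic decomposition of $\mu_0$, for $\mu_0$-a.e.\ point $x$ the orbit is generic for some ergodic component, so $\#V_T(x)=1$ and $x\notin\mathcal{I}(T,\varphi)$; since $\supp\mu_0=X$, the set of such points is dense in $X$. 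Therefore $X\setminus\mathcal{I}(T,\varphi)$ is dense, so $\mathcal{I}(T,\varphi)$ has empty interior.

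Combining the two ingredients finishes the proof: $(T,\varphi)$-sensitivity forces, by \cite[Theorem 2.2]{CCSV21}, sensitivity to initial conditions or nonempty interior of $\mathcal{I}(T,\varphi)$; the hypothesis $\mathcal{M}(X,T)=X$ just shown to imply that $\mathcal{I}(T,\varphi)$ has empty interior; hence $T$ has sensitivity to initial conditions. The one delicate point to get right is the ergodic-decomposition step --- namely that the basins $\mathcal{B}(\eta)$ of the ergodic components carry full $\mu_0$-measure, which is the classical statement that $\mu_0$-a.e.\ point is generic for an ergodic measure --- together with the elementary but essential observation that a point lying in any $\mathcal{B}(\eta)$ (or more generally with $\#V_T(x)=1$) is automatically outside $\mathcal{I}(T,\varphi)$ for \emph{every} $\varphi$, so in particular for the given one. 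No isolated-point hypothesis is needed for this direction, but it is harmless to keep it since it is already assumed in the statement.
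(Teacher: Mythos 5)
Your proposal is correct and follows essentially the same route as the paper: both arguments use \cite[Lemma 5.1]{KOR2016} to produce a fully supported $T$-invariant measure, then the ergodic decomposition together with Birkhoff's theorem to see that the (for all potentials) regular points are dense in $X = \mathcal{M}(X,T)$, so that $\mathcal{I}(T,\varphi)$ has empty interior and the dichotomy of \cite{CCSV21} collapses to sensitivity. The only cosmetic difference is that the paper packages the density step as Lemma~\ref{lemma_dense} (identifying $\mathcal{M}(X,T)$ with the closure of $\bigcup_{\eta}\mathcal{B}(\eta)\cap\supp(\eta)$) and then invokes \cite[Theorem 2.3]{CCSV21} directly, whereas you rule out the second branch of the dichotomy of \cite[Theorem 2.2]{CCSV21}; these are logically the same endgame.
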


\begin{proof}\label{se:proof-corA}

Let $X$ be a compact metric space without isolated points and $T\colon X \to X$ be a continuous map.

\begin{lemma}\label{lemma_dense}
$\mathcal{M}(X,T) \,=\, \overline{\bigcup_{\eta\, \in\, \mathcal{P}_{T}^{erg}(X) } \mathcal{B}(\eta) \cap \supp(\eta)}$.
\end{lemma}

\begin{proof}
Clearly
$$\overline{\bigcup_{\eta\, \in \,\mathcal{P}_{T}^{erg}(X)} \mathcal{B}(\eta) \cap \supp(\eta)} \,\,\subseteq\,\, \overline{\bigcup_{\eta \,\in\, \mathcal{P}_{T}^{erg}(X)} \supp(\eta)} \,\,\subseteq\,\, \mathcal{M}(X,T).$$
We are left to show the converse inclusion. According to \cite[Lemma 5.1]{KOR2016}, there exists an invariant measure $\mu_0$ such that $\supp\mu_0 = \mathcal{M}(X,T)$. Take $z$ in $\mathcal{M}(X,T)$ and let $U$ be an open neighborhood of $z$ in $X$. Then $\mu_0(U) >0 $ and, by the ergodic decomposition of $\mu_0$, there exists a $T$-invariant ergodic measure $\eta_0$ such that $\eta_0(U)> 0 $. Since
$\eta_0\big(\mathcal{B}(\eta_0)\cap \supp (\eta_0)\big) = 1$, one must have $\mathcal{B}(\eta_0) \cap \supp (\eta_0) \cap U \neq \emptyset$. As $U$ is arbitrary, we conclude that $z$ belongs to $\overline{\bigcup_{\eta\, \in\, \mathcal{P}_{T}^{erg}(X) } \mathcal{B}(\eta) \cap \supp(\eta)}$.
\end{proof}

Assume that $X$ is $(T,\varphi)$-sensitive, for some $\varphi \in C^0(X, \mathbb{R})$, and that $\mathcal{M}(X,T)= X$. By
Lemma~\ref{lemma_dense}, one has
$$\overline{\bigcap_{\psi\, \in \,C^0(X,\mathbb{R})} \,X \setminus {\mathcal I}(T,\psi)} \, \,= \,\, X$$
since
$$\bigcup_{\eta\, \in\, \mathcal{P}_{T}^{erg}(X)} \,\mathcal{B}(\eta) \cap \supp(\eta)  \,\,\subseteq \,\bigcap_{\psi\, \in \,C^0(X,\mathbb{R})} \,X \setminus {\mathcal I}(T,\psi).$$
\smallskip

\noindent Therefore, $X \setminus {\mathcal I}(T,\varphi)$ is dense in $X$. So, as $X$ is $(T,\varphi)$-sensitive, we may apply \cite[Theorem~2.3]{CCSV21} and thereby conclude that $T$ has sensitivity to initial conditions.
\end{proof}

\section{Proof of Theorem~\ref{theorem-C}}\label{se:proof-C}

Denote by $\Omega(T)$ the set of non-wandering points of $T$. Since $T$ is a transitive continuous map and $X$ has no isolated points, one has $\Omega(T) = X$ (see \cite[Theorem~1.4]{AC12}). Moreover, the set of minimal points of $T$ is dense in $\Omega(T)$ because $T$ satisfies the shadowing property (cf. \cite[Corollary 1]{Moo11}). Thus, the set $\AP(X,T)$ of minimal points is dense in $X$. As $\AP(X,T) \subseteq \mathcal{M}(X,T)$ and $\mathcal{M}(X,T)$ is closed, one has $\mathcal{M}(X,T) = X$. We note that, from the last equality and Lemma~\ref{lemma_dense}, we deduce that $\bigcup_{\mu \,\in\, \mathcal{P}_{T}^{erg}(X)} \mathcal{B}(\mu)$ is dense in $X$.

Taking into account that $T$ is transitive and has the shadowing property, then one has $\overline{\mathcal{P}_{T}^{erg}(X)} = \mathcal{P}_{T}(X)$ (cf. \cite[Theorem A]{LO18}). Consequently, since $\mathcal{M}(X,T) = X$ and $T$ is not uniquely ergodic, Corollary \ref{cor:ergodic} %\item (iii) and Corollary \ref{HLTcorollary}
implies that  $\mathcal{H}(X,T) \neq \emptyset$ and  $\bigcap_{\varphi \,\in \,\mathcal{H}(X,T)} \mathcal{I}(T,\varphi)$ is Baire generic in $X$.

%Our first claim is that, if $X$ has no isolated points and $T$ is transitive but not uniquely ergodic, then $\mathcal{H}(X,T) \neq \emptyset$.
% Indeed, if $T$ is minimal and not uniquely ergodic, then the set $\bigcap_{\varphi \,\in \,\mathcal{H}(X,T)} \mathcal{I}(T,\varphi)$ of completely irregular points is Baire generic in $X$ (cf. \cite[Remark 2.2 and Proposition 7.2]{CCSV21}). Hence $\mathcal{H}(X,T) \neq \emptyset$. Suppose now that $T$ is transitive but not minimal and not uniquely ergodic. Then $T$ has a recurrent not minimal point. This property, together with the assumption that $T$ is transitive not minimal and $X$ has no isolated points, yields that $T$ has positive topological entropy (cf. \cite[Corollary 4]{Moo11}).

From \cite[Theorem 1.5]{DOT18} we know that, for any continuous map on a compact metric space satisfying the shadowing property, one has $\mathcal{H}(X,T) \neq \emptyset$ if and only if $T$ has positive topological entropy. So, $T$ has positive topological entropy.
%, since as $T$ is a continuous map acting on a compact metric space and satisfies the shadowing property, then the set of irregular points has full entropy.
%we obtain that $\mathcal{H}(X,T) \neq \emptyset$, and then $(X,T)$ is not uniquely ergodic.

As $\mathcal{C}\mathcal{I}(X,T)$ is Baire generic in $X$, there exists $\varphi \in C^0(X, \mathbb{R})$ such that $\mathcal{I}(T,\varphi)$ is also Baire generic.
Due to the fact that $T$ is transitive, this implies that $X$ is $(T,\varphi)$-sensitive by  \cite[Corollary D]{CCSV21}. Using Corollary~\ref{maincorollary_sensensitive},
we finally conclude that $X$ has sensitivity to initial conditions. This completes the proof of Theorem~\ref{theorem-C}.

\subsection{On the condition $\mathcal{M}(X,T) = \Omega(T)$}

In the proof of Theorem~\ref{theorem-C}, it was essential to know that, under its assumptions, the set of the nonwandering points of $T$ coincides with its measure center. Actually, this property holds $C^{1}$-generically. Let $X$ be a compact smooth connected manifold without boundary of dimension $\mathrm{dim}\, X \geq 2$. Denote by $\mathrm{Diff}^1(X)$ the space of $C^{1}$ diffeomorphisms of $X$ endowed with the usual $C^1$ topology.

\begin{proposition}\label{theorem.generic.periodic.dense}
There exists a Baire generic subset $\mathfrak{R} \subseteq \mathrm{Diff}^1(X)$ such that, for every $T \in \mathfrak{R}$, one has
$$\overline{\mathrm{Per}(X,T)} \, = \, \Omega(T) \, = \, \mathcal{M}(X,T).$$
%the set of periodic points of $T$ is dense in the measure center of $T$ and the set of the nonwandering points of $T$ coincides with the measure center of $T$.
\end{proposition}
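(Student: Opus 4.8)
The plan is to deduce this from the classical $C^1$ General Density Theorem of Pugh, after first disposing of two inclusions that hold with no genericity hypothesis whatsoever.

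First I would record the easy half. For any homeomorphism $T$ of $X$ and any $T$-invariant Borel probability measure $\mu$, the Poincar\'e recurrence theorem shows that every point of $\supp\mu$ is nonwandering; since $\Omega(T)$ is closed, this gives $\mathcal{M}(X,T)\subseteq\Omega(T)$. Conversely, every periodic point $p$ of period $k$ lies in the support of the $T$-invariant probability measure $\frac1k\sum_{j=0}^{k-1}\delta_{T^{j}(p)}$, so $\mathrm{Per}(X,T)\subseteq\mathcal{M}(X,T)$, and since $\mathcal{M}(X,T)$ is closed by definition we get $\overline{\mathrm{Per}(X,T)}\subseteq\mathcal{M}(X,T)$. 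Altogether, for \emph{every} homeomorphism $T$ of $X$,
$$\overline{\mathrm{Per}(X,T)}\ \subseteq\ \mathcal{M}(X,T)\ \subseteq\ \Omega(T),$$
so the proposition reduces to producing a Baire generic set $\mathfrak{R}\subseteq\mathrm{Diff}^1(X)$ on which the reverse inclusion $\Omega(T)\subseteq\overline{\mathrm{Per}(X,T)}$ also holds.

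For the latter I would invoke Pugh's $C^1$ Closing Lemma together with the General Density Theorem it yields: $\mathrm{Diff}^1(X)$ is a Baire space, and there is a residual subset $\mathfrak{R}$ of it such that $\overline{\mathrm{Per}(X,T)}=\Omega(T)$ for every $T\in\mathfrak{R}$. The Baire-category packaging is standard: fixing a countable basis $\{U_m\}_{m\in\mathbb{N}}$ of $X$, for each $m$ the set of $T\in\mathrm{Diff}^1(X)$ with $U_m\cap\Omega(T)=\emptyset$ or $U_m\cap\overline{\mathrm{Per}(X,T)}\neq\emptyset$ contains a dense open subset --- openness is elementary, while density is exactly the point at which one perturbs $T$ in the $C^1$ topology, via the Closing Lemma, to turn a recurrent orbit meeting $U_m$ into a periodic one --- and $\mathfrak{R}$ is the intersection over $m$ of these residual sets.

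Finally, combining the two chains of inclusions, for $T\in\mathfrak{R}$ one has
$$\Omega(T)\ =\ \overline{\mathrm{Per}(X,T)}\ \subseteq\ \mathcal{M}(X,T)\ \subseteq\ \Omega(T),$$
whence $\overline{\mathrm{Per}(X,T)}=\Omega(T)=\mathcal{M}(X,T)$, as claimed. The main --- indeed the only --- substantial obstacle is the Closing Lemma, whose proof is notoriously delicate; once it is taken as a black box, the two a priori inclusions together with the Baire-category argument are routine, so I would expect no further effort beyond assembling these ingredients.
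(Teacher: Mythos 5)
Your proposal is correct and follows essentially the same route as the paper: the two unconditional inclusions $\overline{\mathrm{Per}(X,T)}\subseteq\mathcal{M}(X,T)\subseteq\Omega(T)$ (the latter via Poincar\'e recurrence, the former via periodic-orbit measures), combined with Pugh's General Density Theorem to get $\Omega(T)=\overline{\mathrm{Per}(X,T)}$ generically. The only cosmetic difference is that you prove $\mathcal{M}(X,T)\subseteq\Omega(T)$ by noting that the support of \emph{every} invariant measure lies in the closed set $\Omega(T)$, whereas the paper applies the same Poincar\'e argument to a single invariant measure whose support equals the whole measure center; both are valid.
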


\begin{proof}
Recall that every periodic point of $T$ is an element of the measure center $\mathcal{M}(X,T)$. Moreover:

\begin{lemma}\label{lemma.center.measure.nonwandering}
Let $(X,d)$ be a compact metric space and $T\colon X \to X$ be a continuous function. Then $\mathcal{M}(X,T) \subseteq \Omega(T)$.
\end{lemma}

\begin{proof}
By \cite[Lemma 5.1]{KOR2016}, there exists a $T$-invariant probability measure $\mu$ such that $\supp \mu = \mathcal{M}(X,T)$. Take $z \in \mathcal{M}(X,T) = \supp \mu$ and let $U$ be an open neighborhood of $z$. So, $\mu (U)>0$ and, by Poincar\'e recurrence  theorem, $\mu$-almost every point in $U$ is recurrent. Thus, there exists $x \in U$	such that $T^{N}(x) \in U$ for some $N \in \mathbb{N}$. Consequently, $T^{N}(U)\cap U \neq \emptyset$ and $z \in \Omega(T)$.
\end{proof}

Let us resume the proof of Proposition~\ref{theorem.generic.periodic.dense}. From Pugh's Closing Lemma \cite{Pugh}, there exists a Baire generic subset $\mathfrak{R} \subseteq \mathrm{Diff}^1(X)$ such that, for every $T \in \mathfrak{R}$, one has $\Omega(T) = \overline{\mathrm{Per}(X,T)}$. So, applying Lemma~\ref{lemma.center.measure.nonwandering}, we conclude that if $T \in \mathfrak{R}$ then $\Omega(T) = \mathcal{M}(X,T)$, since
$$\Omega(T) \,=\, \overline{\mathrm{Per}(X,T)} \,\subseteq \,\mathcal{M}(X,T) \,\subseteq\, \Omega(T).$$
%So $\Omega(T) = \mathcal{M}(X,T)$.
%Mañé's Ergodic Closing Lemma \cite{M82} and the space of $T$-invariant probability measures is the closed convex hull of the set of ergodic measures supported on periodic orbits of $T$.

\end{proof}

For instance, if $T$ is Anosov (or, more generally, $C^1$ structurally stable), then $\Omega(T) = \mathcal{M}(X,T)$. If, in addition, the space $\mathcal{P}_T(X)$ contains a measure with full support, then $\Omega(T) = X = \mathcal{M}(X,T)$.

\begin{remark} Clearly, $\mathcal{M}(X,T) = X$ if and only if the space $\mathcal{P}_T(X)$ contains a measure with full support. Indeed, if the space $\mathcal{P}_T(X)$ contains a measure with full support then, by definition, $\mathcal{M}(X,T) = X$.  Conversely, assume that $\mathcal{M}(X,T) = X$. Then, by \cite[Lemma 5.1]{KOR2016}, there exists a $T$-invariant probability measure $\mu$ such that $\supp \mu = \mathcal{M}(X,T)$. There may not exist such a measure $\mu$ ergodic, though, as an example in \cite{Weiss} confirms.
\end{remark}

\begin{remark}
It was already known that the set of irregular points is Baire generic in $X$ for $C^1$-generic diffeomorphisms in $\mathrm{Diff}^1(X)$ (cf. \cite[Theorem 3.14]{ABC11}).
\end{remark}

\begin{remark}
A statement similar to the one in Proposition~\ref{theorem.generic.periodic.dense} is valid for the space of homeomorphisms $\Hom(X,\mu_0)$ we considered in Example~\ref{ex:3}. Indeed, if $f \in \Hom(X,\mu_0)$ then $\Omega(f)= X$, due to item $(ii)$ of the definition of Lebesgue-like map $\mu_0$; and it was proved in \cite{DaaFokk} that, for a generic map $f$ in $\Hom(X,\mu_0)$, the set $\mathrm{Per}(f)$ is dense in $X$.
\end{remark}

\subsection{On the condition $\mathcal{M}(X,T) = \overline{\mathrm{Per}(X,T)}$}

In \cite{PfSu}, we find an example of a continuous map $T\colon X \to X$ acting on a compact metric space whose set $\mathcal{P}_T^{erg}(X)$ is dense in $\mathcal{P}_T(X)$, though $T$ has no periodic points. This indicates that the denseness of ergodic measures in $\mathcal{P}_T(X)$ is not enough to ensure that $\mathrm{Per}(X,T)$ is dense in $\mathcal{M}(X,T)$. A sufficient condition is addressed in the next proposition.

\begin{proposition}\label{prop.periodic.dense}
Let $(X,d)$ be a compact metric space without isolated points and $T\colon X \to X$ be a continuous map. If the space of $T$-invariant Borel probability measures is the closed convex hull of the set of ergodic measures supported on periodic orbits of $T$, then $\mathrm{Per}(X,T)$ is dense in $\mathcal{M}(X,T)$.
\end{proposition}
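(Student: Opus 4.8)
The plan is to establish the single nontrivial inclusion $\mathcal{M}(X,T) \subseteq \overline{\mathrm{Per}(X,T)}$. The reverse inclusion is automatic: every periodic point carries an ergodic $T$-invariant probability measure (the uniform measure on its orbit) and hence lies in $\mathcal{M}(X,T)$, which is closed, so $\overline{\mathrm{Per}(X,T)} \subseteq \mathcal{M}(X,T)$. Since $\mathrm{Per}(X,T) \subseteq \mathcal{M}(X,T)$ always holds, proving the displayed inclusion yields $\overline{\mathrm{Per}(X,T)} = \mathcal{M}(X,T)$, which is precisely the statement that $\mathrm{Per}(X,T)$ is dense in $\mathcal{M}(X,T)$. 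As $\mathcal{M}(X,T) = \overline{\bigcup_{\mu \in \mathcal{P}_{T}(X)} \supp \mu}$, it suffices to show that $\supp \mu \subseteq \overline{\mathrm{Per}(X,T)}$ for every $\mu \in \mathcal{P}_{T}(X)$.

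Fix such a $\mu$. Because $X$ is a compact metric space, the weak$^*$ topology on $\mathcal{P}(X)$ is metrizable, so the closed convex hull of a subset $S \subseteq \mathcal{P}(X)$ coincides with the set of weak$^*$ limits of sequences taken from the (finite) convex hull of $S$. By hypothesis $\mathcal{P}_{T}(X)$ is the closed convex hull of the set $\mathcal{E}$ of uniform probability measures on periodic orbits of $T$; hence there is a sequence $(\mu_n)_n$ of finite convex combinations of elements of $\mathcal{E}$ with $\mu_n \to \mu$ in the weak$^*$ topology. For each $n$, the support of $\mu_n$ is a finite union of periodic orbits, hence a finite closed set contained in $\mathrm{Per}(X,T)$.

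It remains to transfer supports across the limit. Let $w \in \supp \mu$ and let $U$ be any open neighbourhood of $w$; then $\mu(U) > 0$, and by the portmanteau theorem $\liminf_{n \to +\infty} \mu_n(U) \geq \mu(U) > 0$, so $\mu_n(U) > 0$, and therefore $U \cap \supp \mu_n \neq \emptyset$, for all sufficiently large $n$. In particular $U$ meets $\mathrm{Per}(X,T)$; since $U$ was arbitrary, $w \in \overline{\mathrm{Per}(X,T)}$, whence $\supp \mu \subseteq \overline{\mathrm{Per}(X,T)}$. Taking the union over all $\mu \in \mathcal{P}_{T}(X)$ and closing gives $\mathcal{M}(X,T) \subseteq \overline{\mathrm{Per}(X,T)}$, as desired. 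I do not expect a serious obstacle here: the only two points demanding any care are the metrizability of the weak$^*$ topology — which is what lets one replace the closed convex hull by an honest approximating sequence — and the elementary fact that the support of a weak$^*$ limit is contained in the closure of the union of the supports of the approximants; the standing hypotheses on $X$ enter only through this metrizability and the nonemptiness of $\mathcal{P}_{T}(X)$.
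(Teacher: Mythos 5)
Your proof is correct and follows essentially the same route as the paper's: approximate by a sequence of finite convex combinations of periodic-orbit measures (using metrizability of the weak$^*$ topology) and apply the portmanteau inequality $\liminf_n \mu_n(U) \geq \mu(U)$ to force $U$ to meet a periodic orbit. The only difference is cosmetic: the paper first invokes \cite[Lemma 5.1]{KOR2016} to produce a single invariant measure whose support equals $\mathcal{M}(X,T)$ and runs the argument for that one measure, whereas you run it for every $\mu \in \mathcal{P}_T(X)$ and take the closure of the union of supports, which bypasses that external lemma.
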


\begin{proof} (Inspired by the proof of \cite[Theorem 30]{KLO16}) Let $U$ be a nonempty open subset of $X$ such that $U \cap \mathcal{M}(X,T) \neq \emptyset$. By \cite[Lemma 5.1]{KOR2016}, there exists a $T$-invariant probability measure $\mu$ such that
$\supp \mu = \mathcal{M}(X,T)$. Thus, $\mu (U) > 0$. Let
$(\nu_{n})_{n \, \in \,\mathbb{N}}$ be a sequence of probability measures which weak$^*$-converges to $\mu$, where $\nu_{n}$ is a finite convex sum of ergodic measures supported on periodic orbits of $T$. Since
$$\liminf_{n \,\to \,+\infty} \,\nu_{n}(U) \,\geq \,\mu(U) \, > \, 0$$
there surely exist an ergodic measure $\eta$ supported on a periodic orbit such that $\eta (U) > 0$. Thus, $U \cap \mathrm{Per}(X,T) \neq \emptyset$.
\end{proof}

\subsection{On the joint assumption of transitivity and shadowing}

A comment is due regarding the relation between the two main assumptions of Theorem~\ref{theorem-C}. The next result is inspired by \cite[Theorem 30]{KOR2016} and provides a necessary and sufficient condition for transitivity within systems with the shadowing property.

\begin{proposition}\label{sha_trans_equi}
Let $(X,d)$ be a compact metric space without isolated points and $T\colon X \to X$ be a continuous map satisfying the shadowing property. Then the following assertions are equivalent:
\begin{enumerate}
\item[$(a)$] $T$ is transitive.
\smallskip

\item[$(b)$] $\mathcal{P}_{T}^{erg}(X)$ is dense in $\mathcal{P}_{T}(X)$.
\end{enumerate}
\end{proposition}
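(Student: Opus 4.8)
The plan is to prove the two implications separately. The implication $(b) \Rightarrow (a)$ should be essentially Corollary~\ref{cor:ergodic}(d)(ii): if $\mathcal{P}_T^{erg}(X)$ is dense in $\mathcal{P}_T(X)$, then the restriction of $T$ to $\mathcal{M}(X,T)$ is transitive (this is also recorded in the cited \cite[Proposition 7.2]{KK}), and combined with the shadowing property one upgrades transitivity of $T|_{\mathcal{M}(X,T)}$ to transitivity of $T$ on all of $X$. To do this, I would first show that $\mathcal{M}(X,T) = X$ under hypothesis $(b)$: the shadowing property forces the chain recurrent set to equal $\Omega(T)$ and in fact (by Moothathu, \cite[Corollary 1]{Moo11}) the minimal points are dense in $\Omega(T)$; but density of $\mathcal{P}_T^{erg}(X)$ forces every nonempty open set meeting $\Omega(T)$ to carry positive mass of some ergodic measure, so $\Omega(T) \subseteq \mathcal{M}(X,T)$, and since always $\mathcal{M}(X,T)\subseteq \Omega(T)$ (Lemma~\ref{lemma.center.measure.nonwandering}) we get $\mathcal{M}(X,T) = \Omega(T)$. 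A short chain-recurrence/shadowing argument then shows $\Omega(T)$ has no proper closed ``attracting'' pieces, hence $\Omega(T)=X$ when $T$ is transitive on it — alternatively, invoke that shadowing plus density of ergodic measures with full support (which exist by \cite[Lemma 5.1]{KOR2016} applied to $\mathcal{M}(X,T)=X$) yields a dense orbit directly.

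For the harder implication $(a) \Rightarrow (b)$, the cleanest route is to cite \cite[Theorem A]{LO18}, which states precisely that a transitive continuous map with the shadowing property satisfies $\overline{\mathcal{P}_T^{erg}(X)} = \mathcal{P}_T(X)$; this is exactly what is used in the proof of Theorem~\ref{theorem-C} above. If instead a self-contained argument is wanted, I would proceed via a gluing/specification-type construction: given finitely many ergodic measures $\mu_1,\dots,\mu_k$ and a target convex combination, use transitivity to produce, for each $i$, an orbit segment whose empirical measure is close to $\mu_i$, then concatenate these segments into a single long pseudo-orbit (transitivity supplies short connecting segments between the end of one block and the start of the next), and finally apply the shadowing property to obtain a genuine orbit whose empirical distribution along a suitable subsequence approximates $\sum \alpha_i \mu_i$; taking the corresponding Birkhoff averages one extracts an ergodic-component approximation. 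The point is that the shadowing lemma converts the abstract ``approximate orbit that visits each $\mu_i$ in turn'' into a real trajectory, and transitivity guarantees such approximate orbits exist and can be linked.

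The main obstacle I anticipate is the direction $(a) \Rightarrow (b)$ if one insists on avoiding the black-box citation \cite[Theorem A]{LO18}: controlling the empirical measures along the shadowing orbit requires careful bookkeeping, because the shadowing point's Birkhoff averages are only $\varepsilon$-close to those of the pseudo-orbit, and one must ensure the connecting (transitivity) segments are negligible in density — this forces the ``data'' blocks to be taken increasingly long relative to the connecting blocks, and one must verify that the limiting empirical measure is not merely close to but actually converges (along a subsequence) to the desired invariant measure, then pass to an ergodic component. Since the paper already relies on \cite[Theorem A]{LO18} in the proof of Theorem~\ref{theorem-C}, the expedient and consistent choice here is to cite it, so the proof of Proposition~\ref{sha_trans_equi} reduces to: $(a)\Rightarrow(b)$ is \cite[Theorem A]{LO18}, and $(b)\Rightarrow(a)$ is the $\mathcal{M}(X,T)=\Omega(T)=X$ argument above together with Corollary~\ref{cor:ergodic}(d)(ii).
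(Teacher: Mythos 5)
Your proposal is correct and follows essentially the same route as the paper: $(a)\Rightarrow(b)$ is exactly the citation of \cite[Theorem A]{LO18}, and $(b)\Rightarrow(a)$ uses the shadowing property together with the density of minimal points in $\Omega(T)$ (\cite[Corollary 1]{Moo11}) to obtain $\mathcal{M}(X,T)=\Omega(T)=C_R(X,T)$, the density of $\mathcal{P}_{T}^{erg}(X)$ to make $T|_{\mathcal{M}(X,T)}$ transitive via \cite{KK}, and a Mazur-type upgrade from transitivity of the chain recurrent set to transitivity on $X$ (\cite[Theorem 1.1]{MM00}), which is precisely Lemma~\ref{measure_center_transitive}. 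Two minor imprecisions are worth noting: the inclusion $\Omega(T)\subseteq\mathcal{M}(X,T)$ follows from the density of minimal points (each of which lies in the support of an invariant measure), not from the density of $\mathcal{P}_{T}^{erg}(X)$ as you phrase it; and the closing appeal to Corollary~\ref{cor:ergodic}(d)(ii) is redundant and would be circular on its own, since that corollary presupposes $\mathcal{M}(X,T)=X$, a fact that only becomes available after transitivity is established, which is why the paper routes the argument through the chain recurrent set instead.
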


\begin{proof} \noindent $(a) \Rightarrow (b)$ Suppose that $T$ is transitive and has the shadowing property. Then, by \cite[Theorem A]{LO18}), one has $\overline{\mathcal{P}_{T}^{erg}(X)} = \mathcal{P}_{T}(X)$.
\smallskip

\noindent $(b) \Rightarrow (a)$ Assume that $T\colon  X \to X$ has the shadowing property and $\overline{\mathcal{P}_{T}^{erg}(X)} = \mathcal{P}_{T}(X)$. By  Lemma~\ref{lemma.center.measure.nonwandering}, we know that $\mathcal{M}(X,T) \subseteq \Omega(T)$. Moreover, the set $\AP(X,T)$ of minimal points of $T$ is dense in $\Omega(T)$ because $T$ satisfies the shadowing property (cf. \cite[Corollary 1]{Moo11}). Since $\AP(X,T) \subseteq \mathcal{M}(X,T)$ and $\mathcal{M}(X,T)$ is closed, one concludes that $\mathcal{M}(X,T) = \Omega(T)$.

\begin{lemma}\label{measure_center_transitive}
Let $T\colon X \to X$ be a continuous map on a compact metric space $(X,d)$ without isolated points and $C_R(X,T)$ denote its chain recurrent set. If $\mathcal{M}(X,T) = C_R(X,T)$ and $\mathcal{P}_{T}^{erg}(X)$ is dense in $\mathcal{P}_{T}(X)$, then $T$ is transitive.
\end{lemma}

\begin{proof}
Since $\overline{\mathcal{P}_{T}^{erg}(X)} = \mathcal{P}_{T}(X)$, the map $T|_{\mathcal{M}(X,T)}\colon \mathcal{M}(X,T) \to \mathcal{M}(X,T)$ is transitive (cf. \cite[Proposition 6.4]{KK}). Hence $T|_{C_R(X,T)}\colon C_R(X,T) \to C_R(X,T)$ is transitive due to the assumption $\mathcal{M}(X,T) = C_R(X,T)$. Therefore, $T$ is transitive in $X$ (cf. \cite[Theorem 1.1]{MM00}).
\end{proof}

We now complete the proof of $(b) \Rightarrow (a)$. Since $T$ satisfies the shadowing property, one has $C_R(X,T) = \Omega(T)$ (see \cite[Theorem 3.1.2]{AH94}). So, $\mathcal{M}(X,T) = C_R(X,T)$. As, by assumption, $\mathcal{P}_{T}^{erg}(X)$ is dense in $\mathcal{P}_{T}(X)$, by applying Lemma~\ref{measure_center_transitive} we conclude that $T$ is transitive.

\end{proof}

\section{The orbits of completely irregular points}\label{se:ci-orbits}

We recall from (cf. \cite[Theorem 4.3 and Theorem 4.5]{Tian}) that, if $T$ is not uniquely ergodic, satisfies the almost specification property and $\mathcal{M}(X,T) = X$, then
$$\bigcap_{x \,\in\, \mathcal{C}\mathcal{I}(X,T)} \omega(x) \, = \, X.$$
In particular, under these assumptions every point in $\mathcal{C}\mathcal{I}(X,T)$ has a dense orbit by $T$. As an application of Theorem \ref{theorem-A}, we extend this result to transitive continuous maps with the shadowing property.
% (as, for instance, transitive Anosov diffeomorphisms or shifts on finite alphabets).

\begin{proposition}\label{theo_tra_sha}
Let $(X,d)$ be a
%an infinite
compact metric space without isolated points and consider a transitive continuous map $T\colon X \to X$ with the shadowing property. If $T$ is not uniquely ergodic, then
%$$\bigcap_{x \,\in\, \mathcal{C}\mathcal{I}(X,T)} \omega(x) \, =\, X.$$
%In particular, the orbit by $T$ of every point in $\mathcal{C}\mathcal{I}(X,T)$ is dense.
$$\mathcal{C}\mathcal{I}(X,T) \,\subseteq \,\mathrm{Trans}(X,T).$$
\end{proposition}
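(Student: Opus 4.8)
The plan is to combine the purely topological conclusions of Theorem~\ref{theorem-A} with the structural facts about $T$ already recorded for transitive maps with the shadowing property. Fix $x \in \mathcal{C}\mathcal{I}(X,T)$; I want to show $\overline{\mathcal{O}_T(x)} = X$, equivalently (since $X$ is compact without isolated points, so $\overline{\mathcal{O}_T(x)} = \mathrm{Trans}(X,T)$ is the unique dense-orbit set when nonempty) that $\omega(x) = X$. By Theorem~\ref{theorem-A}(a), $\bigcap_{x \in \mathcal{C}\mathcal{I}(X,T)} \omega(x) \neq \emptyset$, so pick a point $p$ lying in $\omega(x)$ for every completely irregular $x$. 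The key leverage is Theorem~\ref{theorem-A}(d): for the chosen $x$ there is a $T$-invariant set $\mathbb{B}_x \subseteq \overline{\mathcal{O}_T(x)}$, Baire generic in $\overline{\mathcal{O}_T(x)}$, with $\mathbb{B}_x \subseteq \mathcal{C}\mathcal{I}(X,T)$. Thus every point of $\mathbb{B}_x$ is completely irregular, hence every $z \in \mathbb{B}_x$ satisfies $p \in \omega(z)$, and since $\overline{\mathcal{O}_T(z)} \subseteq \overline{\mathcal{O}_T(x)}$ we even get that $\overline{\mathcal{O}_T(x)}$ contains $\omega(z) \ni p$ for a generic (hence dense) set of $z$.

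Next I would use that $\mathcal{C}\mathcal{I}(X,T)$ is itself Baire generic in $X$: this is exactly Theorem~\ref{theorem-C}(e), whose hypotheses (compact, no isolated points, transitive, not uniquely ergodic, shadowing) match ours verbatim. In particular $\mathcal{C}\mathcal{I}(X,T)$ is dense in $X$. Now the orbit closure $\overline{\mathcal{O}_T(x)}$ is a closed $T$-invariant set, and $S = T|_{\overline{\mathcal{O}_T(x)}}$ is a transitive system (with dense orbit $\mathcal{O}_T(x)$). I would apply Theorem~\ref{theorem-A}(a) \emph{internally} to $(\overline{\mathcal{O}_T(x)}, S)$: but first I must check that $\mathcal{C}\mathcal{I}(\overline{\mathcal{O}_T(x)}, S) \neq \emptyset$ — and indeed $x$ itself lies there, by the claim proved inside the argument for Theorem~\ref{theorem-A}(d) ($x_0 \in \mathcal{I}(S,\psi)$ for every $\psi \in \mathcal{H}(Y,S)$ with $Y = \overline{\mathcal{O}_T(x)}$). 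So the restricted system is non-trivial. The point of passing to the restriction is to exploit that, if $\overline{\mathcal{O}_T(x)} \neq X$, I want to derive a contradiction by comparing the measure center of $(X,T)$ with that of $(Y,S)$.

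The cleanest route: suppose $\overline{\mathcal{O}_T(x)} = Y \subsetneq X$. By Theorem~\ref{theorem-C}(c), $\mathcal{M}(X,T) = X$, so by \cite[Lemma 5.1]{KOR2016} there is $\mu \in \mathcal{P}_T(X)$ with $\supp\mu = X$. On the other hand, since $\mathcal{C}\mathcal{I}(X,T)$ is dense and $Y$ is closed and $T$-invariant, while $\mathbb{B}_x \subseteq Y$ is generic in $Y$ and contained in $\mathcal{C}\mathcal{I}(X,T)$, I pick any $y \in X \setminus Y$ lying in $\mathcal{C}\mathcal{I}(X,T)$ (possible since $\mathcal{C}\mathcal{I}(X,T)$ is dense and $X \setminus Y$ is a nonempty open set, using that $X$ has no isolated points so $Y$ with empty interior-complement… more carefully: $Y \neq X$ closed $T$-invariant in a space with a dense orbit forces $\mathrm{int}\,Y = \emptyset$, so $X \setminus Y$ is open dense, hence meets the generic set $\mathcal{C}\mathcal{I}(X,T)$). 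Now apply Theorem~\ref{theorem-A}(a) with this $y$ and any $z \in \mathbb{B}_x$: both are completely irregular, so $\omega(z) \cap \omega(y) \neq \emptyset$ is forced by intersecting over the (finite sub)family, but $\omega(z) \subseteq Y$ while I will arrange $\omega(y) \subseteq \overline{\mathcal{O}_T(y)}$; to get a contradiction I instead invoke Theorem~\ref{theorem-A}(c): for $x$ (hence $z\in \mathbb B_x$) and $y$ both in $\mathcal{C}\mathcal{I}(X,T)$, and $\nu_1,\nu_2 \in V_T(y)$, one has $\supp\nu_1 \cap (X\setminus\omega(z)) = \supp\nu_2 \cap(X\setminus\omega(z))$; taking $z$ ranging over $\mathbb{B}_x$ whose $\omega$-limit sets all lie in $Y$, and choosing $\nu_2$ a measure of full support (available on the subsystem $\overline{\mathcal{O}_T(y)}$ since that subsystem also satisfies transitivity + shadowing + non-unique-ergodicity if $y$'s orbit closure is large, or else directly) pins $\supp\nu_1$ into $Y \cup \omega(z)$ for all such $z$, and letting $\omega(z)$ exhaust $Y$ gives $V_T(y) \subseteq \mathcal{P}(Y)$, contradicting that some measure in $V_T(y)$ charges $X \setminus Y \ni y$ when $y$ is a transitive point of its own orbit closure. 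The main obstacle I anticipate is precisely this last bookkeeping: making rigorous the passage ``$\omega(z)$ exhausts $Y$ as $z$ ranges over the generic set $\mathbb{B}_x$'' and correctly choosing the auxiliary full-support measure on the right subsystem so that Theorem~\ref{theorem-A}(c) bites; the topological parts (open-density of $X\setminus Y$, density of $\mathcal{C}\mathcal{I}$, internal non-emptiness of $\mathcal{C}\mathcal{I}(Y,S)$) are routine, and $\mathcal{M}(X,T)=X$ plus full-support measure are quoted directly from Theorem~\ref{theorem-C}(c) and \cite{KOR2016}.
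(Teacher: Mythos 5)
Your preparatory steps are fine (quoting Theorem~\ref{theorem-C}(c) and (e), the density of $\mathcal{C}\mathcal{I}(X,T)$, the emptiness of $\operatorname{int}\overline{\mathcal{O}_T(x)}$ when the orbit closure is proper, and the restriction argument from Theorem~\ref{theorem-A}(d)), but the concluding contradiction does not close, and the gap you yourself flag is genuine rather than mere bookkeeping. Two distinct problems. First, Theorem~\ref{theorem-A}(c) only tells you that $\supp\nu_{1}\cap(X\setminus\omega(z))$ and $\supp\nu_{2}\cap(X\setminus\omega(z))$ are \emph{equal}; to conclude that this common trace is \emph{empty} you need two measures in the relevant $V_T(\cdot)$ with \emph{disjoint} supports, which is exactly the opposite of the auxiliary full-support measure you propose to insert (a full-support $\nu_2$ would force $\supp\nu_1\supseteq\overline{\mathcal{O}_T(y)}\setminus\omega(z)$, pushing supports to be large, not small). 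Second, even if you did establish $V_T(y)\subseteq\mathcal{P}(Y)$ for your witness $y\in X\setminus Y$, that is not a contradiction: the measures in $V_T(y)$ are carried by $\omega(y)$, and $\omega(y)$ need not contain $y$, so nothing forces a member of $V_T(y)$ to charge $X\setminus Y$. The "$\omega(z)$ exhausts $Y$" step is also vacuous, since $\omega(z)\subseteq Y$ already for every $z\in\mathbb{B}_x$.

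The paper's proof supplies precisely the two ingredients your sketch is missing. It first disposes of the minimal case trivially, then (in the non-minimal case) fixes $z_0\in\mathcal{C}\mathcal{I}(X,T)$ together with a point $x_0$ satisfying $V_T(x_0)=\mathcal{P}_{T}(X)$ — such $x_0$ exists by Corollary~\ref{cor:ergodic} once one knows $\mathcal{M}(X,T)=X$ and $\overline{\mathcal{P}_{T}^{erg}(X)}=\mathcal{P}_{T}(X)$ (Proposition~\ref{sha_trans_equi}), and it is completely irregular by \cite[Lemma 4.4]{Tian}. Applying Theorem~\ref{theorem-A}(c) to the pair $(z_0,x_0)$ then makes the set $\supp\mu\cap(X\setminus\omega(z_0))$ independent of $\mu$ as $\mu$ ranges over \emph{all} of $\mathcal{P}_{T}(X)$, not just over $V_T$ of some arbitrary irregular point. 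The second ingredient is Lemma~\ref{lem_AP}: a non-minimal system with $\AP(X,T)$ dense admits two minimal points with disjoint orbit closures, hence two ergodic measures with disjoint supports; since their traces on $X\setminus\omega(z_0)$ agree and are contained in disjoint sets, the common trace is empty. Therefore $\supp\mu\subseteq\omega(z_0)$ for every invariant $\mu$, and $\mathcal{M}(X,T)=X$ yields $\omega(z_0)=X$. Without the maximal point $x_0$ and the disjoint-support pair, your contradiction scheme cannot be completed.
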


\begin{proof}

\iffalse
$(i)$ Suppose that $T$ be a transitive continuous map with the shadowing property.  Then, as $X$ has no isolated points, $\Omega(T) = X$. By
Lemma~\ref{lemma.center.measure.nonwandering}, we already know that $\mathcal{M}(X,T) \subseteq \Omega(T)$. Moreover, the set $\AP(X,T)$ of minimal points of $T$ is dense in $\Omega(T)$ because $T$ satisfies the shadowing property (cf. \cite[Corollary 1]{Moo11}). Since $\AP(X,T) \subseteq \mathcal{M}(X,T)$ and $\mathcal{M}(X,T)$ is closed, one concludes that $\mathcal{M}(X,T) = \Omega(T) = X$. Therefore, by Proposition~\ref{sha_trans_equi}, the set $\mathcal{P}_{T}^{erg}(X)$ is dense in $\mathcal{P}_{T}(X)$. Bringing together the properties $\mathcal{M}(X,T) = X$ and $\overline{\mathcal{P}_{T}^{erg}(X)} = \mathcal{P}_{T}(X)$, Corollary~\ref{cor:ergodic} yields that $\big\{x\in X\colon \, V_T(x) = \mathcal{P}_{T}(X)\big\}$ is Baire generic in $X$.
We have that $T$ is not uniquely ergodic and $\big\{x\in X\colon \, V_T(x) = \mathcal{P}_{T}(X)\big\}$ is Baire generic in $X$. These assumptions ensure that $\mathcal{H}(X,T)$ is not empty. Using \cite[Lemma 4.4]{Tian}, we conclude that
$$\big\{x\in X\colon \, V_T(x) = \mathcal{P}_{T}(X)\big\}\, \subseteq\,  \bigcap_{\varphi \,\in \,\mathcal{H}(X,T)} \,\mathcal{I}(T,\varphi),$$
then $\bigcap_{\varphi \,\in \,\mathcal{H}(X,T)} \,\mathcal{I}(T,\varphi)$ is Baire generic in $X$.

\fi

If $T$ is minimal, there is nothing else to prove. Suppose that $T$ is not minimal. Since $T$ is a transitive continuous map with the shadowing property, one has that $\Omega(T) = X$ and,  by Proposition~\ref{sha_trans_equi}, the set $\mathcal{P}_{T}^{erg}(X)$ is dense in $\mathcal{P}_{T}(X)$. Moreover, the set $\AP(X,T)$ of minimal points of $T$ is dense in $\Omega(T)$ because $T$ satisfies the shadowing property (cf. \cite[Corollary 1]{Moo11}). Since $\AP(X,T) \subseteq \mathcal{M}(X,T)$ and $\mathcal{M}(X,T)$ is closed, one concludes that $\mathcal{M}(X,T) = \Omega(T) = X$. Bringing together the properties $\mathcal{M}(X,T) = X$ and $\overline{\mathcal{P}_{T}^{erg}(X)} = \mathcal{P}_{T}(X)$, Corollary~\ref{cor:ergodic} yields that $\big\{x\in X\colon \, V_T(x) = \mathcal{P}_{T}(X)\big\}$ is Baire generic in $X$.

Take $z_0 \in \mathcal{C}\mathcal{I}(X,T)$ and $x_0 \in  \big\{x \in X\colon \, V_T(x) = \mathcal{P}_{T}(X)\big\}$, whose existence is guaranteed by Corollary~\ref{cor:ergodic}. We observe that both $z_0$ and $x_0$ are in $\mathcal{C}\mathcal{I}(X,T)$. Therefore, by item $(c)$ of Theorem \ref{theorem-A}, for every
$\mu, \nu \in V_{T}(x_0) = \mathcal{P}_{T}(X)$, one has
$$\supp(\mu) \cap  \big(X \setminus \omega(z_0)\big) \, = \,\supp(\nu) \cap  \big(X \setminus \omega(z_0)\big).$$

\begin{lemma}\label{lem_AP}
If $T$ is not minimal, there are $p$ and $q \in \AP(X,T)$ such that $\overline{\mathcal{O}(p)} \,\cap \,\overline{\mathcal{O}(q)} = \emptyset.$
\end{lemma}

\begin{proof}
Suppose, by contradiction, that for all $p,q \in \AP(X,T)$ one has $\overline{\mathcal{O}(p)} \,\cap\, \overline{\mathcal{O}(q)} \neq \emptyset$. Then
$$\overline{\mathcal{O}(p)}\,=\,\overline{\mathcal{O}(p)} \cap \overline{\mathcal{O}(q)} \,\subseteq\, \overline{\mathcal{O}(q)} \,=\,\overline{\mathcal{O}(p)} \cap \overline{\mathcal{O}(q)} \,\subseteq\, \overline{\mathcal{O}(p)}$$
because $p$ and $q$ are minimal points. Consequently, $\overline{\mathcal{O}(p)} = \overline{\mathcal{O}(q)}$ for every $p,q \in \AP(X,T)$.

\begin{claim} There exists  $b \in \AP(X,T)$ such that $\overline{\mathcal{O}(b)} \neq X$.
\end{claim}

Otherwise, if for every $b \in \AP(X,T)$ one has $\overline{\mathcal{O}(b)} = X$, then we may take an arbitrary point $x \in X$ and, by Birkhoff's theorem \cite{B66}, we find a minimal point $c$ such that $\overline{\mathcal{O}(c)} \subseteq \overline{\mathcal{O}(x)}$. So, %Moreover, as $\overline{\mathcal{O}(p)} = \overline{\mathcal{O}(q)}$ for every $p,q \in \AP(X,T)$, one has
$$X \,=\,\overline{\mathcal{O}(c)} \,\subseteq \,\overline{\mathcal{O}(x)}.$$
Thus, $x$ has a dense orbit. This shows that $T$ is minimal, contradicting the assumption. The claim is proved.

\medskip

Consider $b \in \AP(X,T)$ such that $\overline{\mathcal{O}(b)} \neq X$. Then $X \setminus \overline{\mathcal{O}(b)} \neq \emptyset$ is an open non-empty set. Take $v \in X \setminus \overline{\mathcal{O}(b)}$ with $d(v,\overline{\mathcal{O}(b)})=\varepsilon >0$. As $\AP(X,T)$ is dense in $\Omega(T) = X$, there exists $w \in \AP(X,T)$ such that $d(v,w) < \varepsilon$. Yet, this is a contradiction, since $\overline{\mathcal{O}(p)} = \overline{\mathcal{O}(q)}$ for every $p,q \in \AP(X,T)$ and so
$$\varepsilon \,< \,d(v,\overline{\mathcal{O}(b)})\, = \,d(v,\overline{\mathcal{O}(w)})\,<\, \varepsilon.$$
Therefore, there must exist $p$ and $q \in \AP(X,T)$ such that $\overline{\mathcal{O}(p)} \cap \overline{\mathcal{O}(q)} = \emptyset$. This completes the proof of the lemma.

\end{proof}

Let us resume the proof of Proposition~\ref{theo_tra_sha}. Since $T$ is not minimal, take $p, q \in \AP(X,T)$ such that $\overline{\mathcal{O}(p)} \cap \overline{\mathcal{O}(q)} = \emptyset$, provided by Lemma \ref{lem_AP}. Consider $\eta_{p}$ and $\eta_{q}$  ergodic probability measures supported on $\overline{\mathcal{O}(p)}$ and $\overline{\mathcal{O}(q)}$, respectively. Thus, $\supp(\eta_{p})  \cap  \supp(\eta_{q})  = \emptyset$, and therefore
$$\supp(\eta_{p}) \cap  \big(X \setminus \omega(z_0)\big) \, = \,\supp(\eta_{q}) \cap  \big(X \setminus \omega(z_0)\big) = \emptyset.$$
This implies (see item (c) of Theorem~\ref{theorem-A}) that
$$\supp(\mu) \cap  \big(X \setminus \omega(z_0)\big) \, = \,\supp(\nu) \cap  \big(X \setminus \omega(z_0)\big) = \emptyset \quad \quad \forall\,\mu, \nu \in V_{T}(x_0) = \mathcal{P}_{T}(X).$$
Thus
$$\supp(\mu) \subseteq \omega(z_0) \quad \quad \forall\, \mu \in V_{T}(x_0) = \mathcal{P}_{T}(X).$$

\noindent Hence, $ X = \mathcal{M}(X,T) \subseteq \omega(z_0)$. As $z_0$ is arbitrary in $\mathcal{C}\mathcal{I}(X,T)$, we conclude that
$$X \,=\, \mathcal{M}(X,T) \, =\, \bigcap_{x \,\in\, \mathcal{C}\mathcal{I}(X,T)} \omega(x).$$
\end{proof}

As a matter of fact, the proof of Proposition~\ref{theo_tra_sha} also shows that:

\begin{proposition}\label{theo_tra_sha_mod}
Let $(X,d)$ be a compact metric space without isolated points and consider a continuous map $T\colon X \to X$. If $T$ is not uniquely ergodic, $\big\{x\in X\colon\, V_T(x) = \mathcal{P}_{T}(X)\big\}$ is nonempty and the set $\AP(X,T)$ of minimal points is dense in $X$, then
$$\mathcal{C}\mathcal{I}(X,T) \,\subseteq \,\mathrm{Trans}(X,T).$$
\end{proposition}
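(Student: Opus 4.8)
The plan is to reproduce the argument of Proposition~\ref{theo_tra_sha}, isolating exactly the ingredients that were actually used there so as to weaken the hypotheses to the ones stated here. First I would fix an arbitrary $z_0 \in \mathcal{C}\mathcal{I}(X,T)$ and a point $x_0 \in \{x\in X\colon V_T(x) = \mathcal{P}_{T}(X)\}$, which is nonempty by assumption. Since $T$ is not uniquely ergodic, $\mathcal{C}\mathcal{I}(X,T)$ is nonempty (any $\varphi$ separating two distinct invariant measures, suitably adjusted, witnesses a completely irregular point; more directly, this is the content used to ensure $\mathcal{H}(X,T)\neq\emptyset$), so the choice of $z_0$ makes sense; in fact one should first observe that $x_0$ itself lies in $\mathcal{C}\mathcal{I}(X,T)$, by \cite[Lemma 4.4]{Tian}, since $V_T(x_0)=\mathcal{P}_T(X)$ forces $x_0\in\mathcal{I}(T,\varphi)$ for every $\varphi\in\mathcal{H}(X,T)$. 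Thus both $z_0$ and $x_0$ are completely irregular, and item $(c)$ of Theorem~\ref{theorem-A} applies to the pair $(z_0,x_0)$: for all $\mu,\nu\in V_T(x_0)=\mathcal{P}_T(X)$,
$$\supp(\mu)\cap\big(X\setminus\omega(z_0)\big) \,=\, \supp(\nu)\cap\big(X\setminus\omega(z_0)\big).$$

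Next I would run the analogue of Lemma~\ref{lem_AP} in this setting. The only external inputs that lemma used were: the density of $\AP(X,T)$ in $X$ (here $\Omega(T)$ is replaced by $X$, which is exactly our hypothesis), Birkhoff's theorem that $\overline{\mathcal{O}(x)}$ contains a minimal set for every $x$, and the fact that $T$ is not minimal. Now, if $T$ were minimal there would be nothing to prove, since then $\mathrm{Trans}(X,T)=X$; so I would assume $T$ is not minimal and conclude, exactly as in Lemma~\ref{lem_AP}, that there exist minimal points $p,q\in\AP(X,T)$ with $\overline{\mathcal{O}(p)}\cap\overline{\mathcal{O}(q)}=\emptyset$. (The argument is unchanged: if all closures of orbits of minimal points pairwise intersected, minimality of those orbit closures would force them all equal; if that common closure were all of $X$ then every point would have dense orbit and $T$ would be minimal; and if it were a proper closed set, a point outside it at positive distance, approximated by a minimal point using the density of $\AP(X,T)$, yields a contradiction.)

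Finally I would close the argument as in Proposition~\ref{theo_tra_sha}. Pick ergodic measures $\eta_p,\eta_q$ supported on $\overline{\mathcal{O}(p)}$ and $\overline{\mathcal{O}(q)}$ respectively; then $\supp(\eta_p)\cap\supp(\eta_q)=\emptyset$, whence
$$\supp(\eta_p)\cap\big(X\setminus\omega(z_0)\big) \,=\, \supp(\eta_q)\cap\big(X\setminus\omega(z_0)\big) \,=\, \emptyset,$$
because two sets with empty intersection, whose common value on $X\setminus\omega(z_0)$ must be equal by item $(c)$ of Theorem~\ref{theorem-A}, must both meet $X\setminus\omega(z_0)$ in the empty set. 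Applying item $(c)$ once more with the pair $(z_0,x_0)$, the common value is the same for every $\mu\in V_T(x_0)=\mathcal{P}_T(X)$, so $\supp(\mu)\subseteq\omega(z_0)$ for all $\mu\in\mathcal{P}_T(X)$; taking the closure of the union over all such $\mu$ gives $\mathcal{M}(X,T)\subseteq\omega(z_0)$. But $\AP(X,T)\subseteq\mathcal{M}(X,T)$ and $\mathcal{M}(X,T)$ is closed, so the density of $\AP(X,T)$ forces $\mathcal{M}(X,T)=X$, and hence $\omega(z_0)=X$, i.e. $z_0\in\mathrm{Trans}(X,T)$. Since $z_0$ was an arbitrary completely irregular point, this proves $\mathcal{C}\mathcal{I}(X,T)\subseteq\mathrm{Trans}(X,T)$. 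The main point to be careful about — the step that does the real work — is verifying that the hypotheses listed here ($\{x\colon V_T(x)=\mathcal{P}_T(X)\}\neq\emptyset$ and $\AP(X,T)$ dense) genuinely replace everything that transitivity-plus-shadowing gave us in Proposition~\ref{theo_tra_sha}, namely the Baire-genericity of $\{x\colon V_T(x)=\mathcal{P}_T(X)\}$ (only nonemptiness is actually needed, to produce $x_0$), the density of minimal points (assumed outright), and the equality $\mathcal{M}(X,T)=X$ (here derived at the end rather than assumed); no use of the shadowing property per se survives.
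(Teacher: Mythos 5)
Your proposal is correct and follows essentially the same route as the paper, which proves Proposition~\ref{theo_tra_sha_mod} precisely by observing that the argument for Proposition~\ref{theo_tra_sha} only uses the nonemptiness of $\big\{x\colon V_T(x)=\mathcal{P}_{T}(X)\big\}$, the density of $\AP(X,T)$, and non-unique ergodicity. Your bookkeeping of which hypotheses replace transitivity-plus-shadowing (including deriving $\mathcal{M}(X,T)=X$ at the end from the density of minimal points, and placing $x_0$ in $\mathcal{C}\mathcal{I}(X,T)$ via \cite[Lemma 4.4]{Tian}) matches the paper's intended reading.
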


\begin{remark} Let $Y$ be a locally compact Hausdorff space which is not compact and has no isolated points. Then, for any continuous map $T\colon Y \to Y$, the interior of the set $\mathrm{Trans}(X,T)$ is empty
(cf. \cite[Theorem 1]{Ber}). Moreover, if $X$ is a compact Hausdorff space without isolated points, then for any continuous map $T\colon X \to X$ either $\mathrm{Trans}(X,T) = X$ or the interior of the set $\mathrm{Trans}(X,T)$ is empty (cf. \cite[Theorem 5]{Ber}). In particular, if $X$ is a compact convex Hausdorff space without isolated points, and it is not a singleton, then for any continuous map $T\colon X \to X$ the interior of the set $\mathrm{Trans}(X,T)$ is empty, since $T$ has a fixed point by Schauder-Tychonoff Theorem.
\end{remark}

\section{Examples}\label{se:examples}

We will now apply our results to a few examples.

\begin{example}\label{ex:1} \textbf{Expanding maps and $\beta$-shifts.}

Let $X$ be a compact connected boundaryless Riemannian manifold and $T\colon X \to X$ a $C^{1}$ expanding map. It is known that the periodic points of $T$ are dense in $X$, so $T$ is not uniquely ergodic; every
$T$-invariant probability measure can be approximated in the weak$^*$ topology by probability measures supported on periodic orbits, thus $\mathcal{P}_{T}^{erg}(X)$ is dense in $\mathcal{P}_{T}(X)$;
and, by Ruelle's theorem, there is a $T$-invariant probability measure with full support, hence $\mathcal{M}(X,T) = X$ (cf. \cite{OV}). Therefore, we may apply Theorem~\ref{theorem-B} and deduce that both sets
$\big\{x\in X\colon \, V_T(x) = \mathcal{P}_{T}(X)\big\}$ and $\bigcap_{\varphi \,\in \,\mathcal{H}(X,T)} \mathcal{I}(T,\varphi)$ are Baire generic in $X$. This does not lead us beyond what is already known, though, since such a map $T$ is topologically mixing and satisfies the periodic specification property, so we may also use \cite[Corollary 1]{Tian} to conclude that the set of completely irregular points of $T$ is Baire generic in $X$.

On the contrary, not all $\beta$-shifts (or S-gap shifts, for that matter) satisfy the specification property (cf. \cite{Ren, Jung}). Yet, every $\beta$-shift $T_\beta\colon X_\beta \to X_\beta$ has the almost specification property, so the irregular set for any map $\varphi \in C^0(X, \mathbb{R})$ is either empty or has full topological entropy and Hausdorff dimension (cf. \cite{Thomp}); and the set of completely irregular points is Baire generic in $X_\beta$ (cf. \cite{Tian}). On the other hand, such a transformation $T_\beta$ is transitive, not uniquely ergodic, $\mathcal{M}(X_\beta,T) = X_\beta$ and $\mathcal{P}_{T}^{erg}(X_\beta)$ is dense in $\mathcal{P}_{T}(X_\beta)$ (cf. \cite{KK}). Therefore, we likewise deduce from Theorem~\ref{theorem-B} that both sets $\big\{x\in X\colon \, V_T(x) = \mathcal{P}_{T}(X)\big\}$ and $\bigcap_{\varphi \,\in \,\mathcal{H}(X,T)} \mathcal{I}(T,\varphi)$ are Baire generic in $X$.

\end{example}

\begin{example}\label{ex:1A} \textbf{Approximate product property.}

In \cite[page 2]{KLO16}, we may find an accurate scheme connecting several specification-like properties, such as almost specification, specification, periodic specification, weak-specification, periodic weak-specification,  relative almost specification, relative specification, relative weak-specification, almost product property and approximate product property, just to name a few. All the aforementioned properties imply the approximate product property (cf. \cite{DAT90,KPR14,PfSu07,YA09} for more details regarding this notion).

It is known that, for any continuous map $T\colon X \to X$ with the approximate product property, the set $\mathcal{P}_{T}^{erg}(X)$ is dense in $P_{T}(X)$. Actually, $\mathcal{P}_{T}^{erg}(X)$ is entropy-dense in $\mathcal{P}_{T}(X)$ -- see \cite[Proposition 2.1 (6) and Theorem E (3)]{HLT21v1} or \cite[Proposition 2.3 and Theorem 2.1]{PfSu}. Moreover, the approximate product property also implies that the set $\AP(X,T)$ of minimal points is dense in $\mathcal{M}(X,T)$ (cf. \cite[Lemma 5.8]{HLT21v2}). So, if $X$ is a compact metric space and $T\colon X \to X$ is a continuous map satisfying the approximate product property and such that $\mathcal{M}(X,T) = X$, then:

\begin{itemize}
\item We may apply Theorem~\ref{theorem-B} to conclude that $T$ is transitive and the set $\mathcal{C}\mathcal{I}(X,T)$ is Baire generic in $X$.
\smallskip

\item Item (d) of Corollary~\ref{cor:ergodic} informs that $\big\{x\in X\colon \, V_T(x) = \mathcal{P}_{T}(X)\big\}$ is Baire generic in $X$.
\smallskip

\item If, in addition, $T$ is non-uniquely ergodic and the compact metric space $X$ has no isolated points, then we can apply Proposition~\ref{theo_tra_sha_mod} to show that the orbit of every completely irregular point is dense in $X$.
\end{itemize}

For more information on systems with the approximate product property for which $M(X,T)=X$, we refer the reader to \cite[Theorem A, Remark 1.2 (b) and Theorem D(3)]{HLT21v2}.

After \cite[Theorem 1.4]{DTY15}, we are aware that, if $T\colon X \to X$ satisfies the asymptotic average shadowing property, then $\big\{x\in X\colon\, V_T(x) = \mathcal{P}_{T}(X)\big\}$ is nonempty. Therefore, Proposition~\ref{theo_tra_sha_mod} also applies to non-uniquely ergodic continuous maps, acting on compact metric spaces without isolated points, satisfying the asymptotic average shadowing property and such that the set $\AP(X,T)$ of minimal points is dense in $X$. In this context, we recall that Kulczycki and Oprocha showed that, if $T$ satisfies the asymptotic average shadowing property and the set of minimal points is dense in $X$, then $T$ is totally transitive (cf \cite[Theorem 3.4]{KO10}).

\end{example}

\begin{example}\label{ex:2} \textbf{Transitive non-uniquely ergodic subshifts.}

Let $\mathcal{A}$ be a finite alphabet with $\ell$ symbols, $X$ be the space $\mathcal{A}^{\mathbb{Z}}$ of all bilateral sequences of elements of $\mathcal{A}$ and $\sigma$ the shift operator on $X$. Since every subshift of finite type has the shadowing property (cf. \cite{W78}), then any transitive subshift of finite type which is not uniquely ergodic satisfies the hypothesis of Theorem~\ref{theorem-C}.

It is easy to exhibit an example of a continuous map $T\colon X \to X$ on a compact metric space $X$ which is not uniquely ergodic and whose set of irregular points for any continuous function $\varphi \colon X \to X$ is empty. For instance, the map $T(x) = x^2$ in the unit interval $[0,1]$ has exactly two $T$-invariant ergodic probabilities and every $x \in [0,1]$ is in the basin of attraction of one of them; in particular, any $T$-invariant non-ergodic probability measure has empty basin. In \cite[Proposition 9.9]{KK}, one may find the construction of a topologically mixing shift with the same properties.

\end{example}

\begin{example}\label{ex:3} \textbf{Conservative mixing homeomorphisms with the shadowing property.}

Let $(X,d)$ be a compact connected manifold, with or without boundary, of dimension $\mathrm{dim} \,X \geq 2$, endowed with a distance $d$. A \emph{Lebesgue-like measure} $\mu$ on $X$ is a Borel probability measure which is non-atomic, has full support and gives measure zero to the boundary $\partial X$ of $X$:
\begin{itemize}
\item[(i)]  $\,\,\forall\, p \in X \quad \mu(\left\{ p \right\} ) = 0$;
\item[(ii)] $\,\,\mu(U) > 0$ for every nonempty open set  $U \subset X$;
\item[(iii)] $\,\, \mu\,(\partial X) = 0$.
\end{itemize}

Suppose that $X$ possesses a Lebesgue-like measure, say $\mu_0$. Denote by $\Hom(X,\mu_0)$ the set of homeomorphisms of $X$ which preserve the measure $\mu_0$. This space is metrizable by the uniform distance for homeomorphisms, given by $D(f,g) = D_1(f,g) + D_1(f^{-1},g^{-1})$, where $D_1(f,g) = \max_{x \,\in\, X} d(f(x), g(x))$.

According to \cite[Theorem 1.3 and Corollary 1.4]{GL18}, a generic element in $\Hom(X,\mu_0)$ is topologically mixing and satisfies the shadowing property. Thus, the conclusion of Theorem~\ref{theorem-C} holds generically in $\Hom(X,\mu_0)$.

It is worthwhile comparing this result with \cite[Theorem 3.6]{AA13}, where it was proved that a generic homeomorphism of $X$ has convergent Birkhoff averages at Lebesgue almost everywhere for every continuous potential $\varphi \in C^0(X, \mathbb{R})$.

\end{example}

\begin{example}\label{ex:7} \textbf{Positively expansive maps on connected spaces.}

Let $T\colon X \to X$ be a continuous self-map of a compact metric space $(X,d)$. It is known that:
\begin{itemize}
\item[(a)] If $T$ is positively expansive, then $T$ is open if and only if $T$ has the shadowing property (cf. \cite[Theorem 1]{Sakai1}).
\smallskip
%\item[(b)] If $T$ is positively expansive and open, then $T$ is transitive if and only if $T$ has the average shadowing property with respect to some metric \cite[Theorem 2]{Sakai1}.
%\smallskip
\item[(b)] If $X$ is connected and $T$ is positively expansive and open, then $T$ is transitive (more precisely, $T$ is topologically mixing -- see \cite{Ruelle, Sakai2}).
\end{itemize}

Assume now that $X$ is connected and $T\colon X \to X$ is a transitive continuous map with the shadowing property. Then $T$ is neither minimal nor uniquely ergodic. Indeed, either such a $T$ is minimal, equicontinuous and its topological entropy $h_{\mathrm{top}}(T) = 0$, or $T$ is not minimal and $h_{\mathrm{top}}(T) >0$ (cf. \cite[Theorem 6]{Moo11}). Moreover, when $T$ is equicontinuous, the space $X$ is totally disconnected (cf. \cite[Theorem 4]{Moo11}). Therefore, such a $T$ cannot be minimal.
%, and so one has $h_{\mathrm{top}}(T) >0$.
Furthermore, by \cite[Theorem 1.5]{DOT18}, $\mathcal{H}(X,T) \neq \emptyset$; hence $T$ is not uniquely ergodic. %Finally, by Theorem~\ref{theorem-C}, the set $\mathcal{C}\mathcal{I}(X,T) = \bigcap_{\varphi \,\in \,\mathcal{H}(X,T)} \,{\mathcal I}(T,\varphi)$ is Baire generic in $X$.

Consequently, if $T\colon X \to X$ is a continuous, open, positively expansive map of a compact connected metric space $(X,d)$, then $T$ is transitive, has the shadowing property, is not uniquely ergodic and $X$ has no isolated points; thus, all the conclusions of Theorem~\ref{theorem-C} are valid for $T$.

\end{example}

\section{Expansive homeomorphisms}

Motivated by the statement of Theorem~\ref{theorem-C}, in the first half of this section we address the connection between the existence of irregular points, the cardinality of the non-wandering set and the topological entropy of expansive homeomorphisms with the shadowing property; whereas in the second half we present a counterpart of Proposition~\ref{sha_trans_equi} for this family of maps.

\subsection{Irregular points and entropy}\label{sse:ip-ent} The next result indicates that, within this setting, the existence of an irregular point has significant consequences.

\begin{proposition}\label{non_wand_card_entropy}
Let $(X,d)$ be a compact metric space without isolated points and $T\colon X \to X$ be an expansive homeomorphism with the shadowing property. Then the following assertions are equivalent:
\begin{itemize}
\item[$(i)$] $\,\,\mathcal{H}(X,T) \neq \emptyset$.
\smallskip
\item[$(ii)$] $\,\,h_{\mathrm{top}}(T) >0$.
\smallskip
\item[$(iii)$] $\,\,\#\,\Omega(T)$ is infinite.
\end{itemize}
\end{proposition}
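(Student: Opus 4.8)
The plan is to establish the cycle of implications $(ii) \Rightarrow (iii) \Rightarrow (i) \Rightarrow (ii)$, exploiting the fact that several of these equivalences are already available in the literature under the weaker hypothesis that $T$ merely has the shadowing property, so that expansiveness is needed only for one of the three arrows. First, the implication $(i) \Leftrightarrow (ii)$ is immediate from \cite[Theorem 1.5]{DOT18}, which asserts that for \emph{any} continuous map on a compact metric space with the shadowing property one has $\mathcal{H}(X,T) \neq \emptyset$ if and only if $h_{\mathrm{top}}(T) > 0$; expansiveness is not even needed here. So the real content is the equivalence of $(iii)$ with these two conditions.

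For $(iii) \Rightarrow (ii)$: assume $\Omega(T)$ is infinite. The restriction $T|_{\Omega(T)}$ is an expansive homeomorphism of the compact metric space $\Omega(T)$, and since $T$ has the shadowing property, the restriction to the non-wandering set has the shadowing property as well (this is standard; the chain recurrent set equals $\Omega(T)$ by \cite[Theorem 3.1.2]{AH94}, and one works there). The key step is then to invoke the structure theory of expansive homeomorphisms with shadowing: by the spectral-decomposition-type results (Aoki--Hiraide \cite{AH94}), $\Omega(T)$ decomposes into finitely many basic sets, on each of which $T$ is topologically transitive; and each basic set is either a finite periodic orbit or an infinite set carrying a subsystem that is topologically conjugate to (a quotient of) a subshift of finite type, hence has positive entropy. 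Since $\Omega(T)$ is infinite, at least one basic set must be infinite, and positive entropy of that piece forces $h_{\mathrm{top}}(T) = h_{\mathrm{top}}(T|_{\Omega(T)}) > 0$. This is the step I expect to be the main obstacle, because it is where expansiveness is essential and where one must be careful about whether the cited decomposition gives a genuine SFT factor with positive entropy or only a transitive piece; an alternative and perhaps cleaner route is to argue directly that an infinite expansive system with shadowing contains a horseshoe-like subsystem, using the local product structure provided by expansiveness plus shadowing to code orbits.

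For $(i) \Rightarrow (iii)$ (equivalently $(ii) \Rightarrow (iii)$), argue by contraposition: if $\Omega(T)$ is finite, then since $\Omega(T)$ is closed and $T$-invariant and every point of a finite invariant set is periodic, $\Omega(T)$ is a finite union of periodic orbits. Then every $\omega$-limit set $\omega(x)$ is contained in $\Omega(T)$ and, being connected in the sense of being a single transitive piece, must be a single periodic orbit; hence the Birkhoff averages $\frac1n\sum_{j=0}^{n-1}\varphi(T^j(x))$ converge for every $x$ and every $\varphi \in C^0(X,\mathbb{R})$ (the empirical measures converge to the uniform measure on that periodic orbit, using that $x$ is attracted to $\Omega(T)$ and a shadowing/pseudo-orbit-tracing argument to pin down which orbit and to control the time spent near it). Therefore $\mathcal{I}(T,\varphi) = \emptyset$ for all $\varphi$, i.e. $\mathcal{H}(X,T) = \emptyset$, and also $h_{\mathrm{top}}(T)=0$. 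Closing the three implications completes the proof; I would present it as $(ii)\Rightarrow(iii)\Rightarrow(i)\Rightarrow(ii)$ with the observation that $(i)\Leftrightarrow(ii)$ comes for free from \cite{DOT18}, so only $(ii)\Rightarrow(iii)$ and $(iii)\Rightarrow(i)$ genuinely need proof.
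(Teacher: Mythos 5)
Your overall architecture is sound, and two of the three implications coincide with what the paper does: the equivalence $(i)\Leftrightarrow(ii)$ is taken directly from \cite[Theorem 1.5]{DOT18}, and your contrapositive for $(ii)\Rightarrow(iii)$ (a finite $\Omega(T)$ is a finite union of periodic orbits, some power of $T|_{\Omega(T)}$ is the identity, and $h_{\mathrm{top}}(T)=h_{\mathrm{top}}(T|_{\Omega(T)})=0$) is exactly the paper's argument. Your additional direct verification that Birkhoff averages converge when $\Omega(T)$ is finite is correct (a finite $\omega$-limit set is a single periodic orbit, and the empirical measures converge to the periodic orbit measure), but it is redundant once $(i)\Leftrightarrow(ii)$ is in hand, and the shadowing argument you allude to there is not needed.

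The genuine divergence, and the only place where something is actually missing, is $(iii)\Rightarrow(ii)$. You correctly invoke the spectral decomposition of $\Omega(T)$ into finitely many disjoint closed invariant sets $F_1,\dots,F_k$ on which $T$ is transitive, and observe that one of them, say $F_1$, must be infinite. But the conclusion that $F_1$ has positive entropy because it carries ``a quotient of a subshift of finite type'' is a non sequitur as written: factors can collapse entropy, so to make this route work you would need the full Markov-partition machinery for expansive homeomorphisms with shadowing together with the fact that expansiveness forces the coding map to be entropy-preserving (uniformly finite-to-one once the partition diameter is below the expansivity constant). You flag this yourself as the weak point. The paper closes the step by a much softer route: by \cite[Theorem 3.4.2]{AH94} the periodic points are dense in $\Omega(T)$, hence in each $F_i$; an infinite transitive system with dense periodic points is sensitive to initial conditions by \cite{BBGDS92}; and a system with the shadowing property possessing a sensitive point has positive topological entropy by \cite[Theorem 3.7]{LO13}. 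Combined with $h_{\mathrm{top}}(T|_{\Omega(T)})=\max_{i}h_{\mathrm{top}}(T|_{F_i})$, this yields $(iii)\Rightarrow(ii)$ without any coding or horseshoe construction. Your route can be made rigorous, but as it stands the entropy-preservation of the symbolic factor map is an unproved and essential claim; either supply it or switch to the sensitivity argument.
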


For instance, an Axiom A diffeomorphism has positive topological entropy if and only if $\#\,\Omega(T)$ is infinite.
%In particular, since a Smale horseshoe has infinite periodic points and is an Axiom A diffeomorphism, then it has positive entropy.

\begin{proof} $(i) \Leftrightarrow (ii)$ by \cite[Theorem 1.5]{DOT18}.

\noindent $(ii) \Rightarrow (iii)$ Suppose that $h_{\mathrm{top}}(T) >0$.
%By \cite[Theorem 3.7]{LO13}, there exists a sensitive point in $\Omega(T)$. From definition of sensivity to initial conditions, we have that
Then $\#\,\Omega(T)$ is infinite, otherwise $\Omega(T)$ is a finite union of periodic orbits, and therefore a power $T^p_{|\Omega(T)}$ is the identity map, which has zero entropy; hence
$$h_{\mathrm{top}}(T) \,=\, h_{\mathrm{top}}(T_{|\Omega(T)}) \, = \, \frac{1}{p}\,h_{\mathrm{top}}(T^p_{|\Omega(T)}) \,= \,0$$
contradicting the assumption.

\noindent $(iii) \Rightarrow (ii)$ Suppose that $\#\,\Omega(T)$ is infinite. As $X$ is a compact metric space and $T\colon X \to X $ is an expansive homeomorphism with the shadowing property, then $\Omega(T)$ can be written as a finite union of disjoint closed invariant sets
$(F_i)_{1 \,\leq \,i \,\leq \,K}$ on each of which $T$ is topologically transitive (see \cite[Theorem 11.13]{A93}), or \cite[Theorem 3.4.4]{AH94}, or \cite[Theorem 3]{DLRW13}).
%F_is a closed invariant set of $X$ for $i \in \{1,\cdots, k\}$ on which $T$ is topologically transitive such that $F_{i} \cap F_{j} = \emptyset$ for every all $i\neq j$.
Moreover, from \cite[Theorem 3.4.2]{AH94} one has $\overline{\mathrm{Per}(X,T)} = \Omega(T)$, so
$$\overline{\mathrm{Per}(X,T)} \,= \, F_{1} \cup \cdots \cup F_{k}.$$
Thus $\mathrm{Per}(X,T) \cap F_{i}$ is dense in $F_{i}$ for every $i \in \{1,\cdots,k\}$, since these subsets are closed, disjoint and $T$-invariant.

As $\#\,\Omega(T)$ is infinite, there exists $F_i$ whose cardinal in infinite. Relabeling the sets if necessary, we may assume that $\#F_{1}$ is infinite. Taking into account that $T|_{F_{1}} \colon F_{1}\to F_{1}$ is a transitive map with a dense set of periodic points and $\#F_{1}$ is infinite, we conclude that $(F_{1},T|_{F_{1}})$ has sensivity to initial conditions (cf. \cite{BBGDS92}). So, there exists a sensitive point in $\Omega(T)$. By \cite[Theorem 3.7]{LO13}, one has
$$h_{\mathrm{top}}(T_{|\Omega(T)}) \,=\, \max_{i\, \in \,\{1,\cdots,k\}} \,h_{\mathrm{top}}(T|_{F_{i}}) \, \geq \,  h_{\mathrm{top}}(T|_{F_{1}}) > 0.$$
\end{proof}

\begin{corollary}
Let $(X,d)$ be a compact metric space without isolated points and $T\colon X \to X$ be an expansive homeomorphism with the shadowing property. If $h_{\mathrm{top}}(T) >0$ then there exists a basic set $\Lambda \subseteq  \Omega(T)$ with infinite cardinal such that $\mathcal{C}\mathcal{I}(\Lambda, \,T_{|\Lambda}) \,= \bigcap_{\varphi \,\in \,\mathcal{H}(\Lambda, \,T_{|\Lambda})} \,{\mathcal I}( T_{|\Lambda},\,\varphi)$ is Baire generic in $\Lambda$.
\end{corollary}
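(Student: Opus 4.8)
The plan is to produce the set $\Lambda$ by invoking the spectral decomposition for expansive homeomorphisms with the shadowing property, exactly as in the proof of Proposition~\ref{non_wand_card_entropy}, and then to quote Theorem~\ref{theorem-C} for the restricted system. First I would write $\Omega(T) = F_1 \cup \cdots \cup F_k$ as a finite union of pairwise disjoint closed $T$-invariant sets on each of which $T$ is topologically transitive (cf. \cite[Theorem 11.13]{A93} or \cite[Theorem 3.4.4]{AH94}); this decomposition also gives $\overline{\mathrm{Per}(X,T)} = \Omega(T)$, so $\mathrm{Per}(X,T) \cap F_i$ is dense in $F_i$ for every $i$, and — crucially — each restriction $T_{|F_i}$ again enjoys the shadowing property. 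Since topological entropy is carried by the non-wandering set and $h_{\mathrm{top}}(T_{|\Omega(T)}) = \max_{1 \le i \le k} h_{\mathrm{top}}(T_{|F_i})$, the hypothesis $h_{\mathrm{top}}(T) > 0$ forces $h_{\mathrm{top}}(T_{|F_i}) > 0$ for some index $i$; I would fix such an $i$ and set $\Lambda = F_i$. As a finite set has zero entropy, $\Lambda$ is infinite, so it is a basic set of infinite cardinal as required.

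Next I would verify that the system $(\Lambda, T_{|\Lambda})$ meets all the hypotheses of Theorem~\ref{theorem-C}. It is a compact metric space, and $T_{|\Lambda}$ is a transitive homeomorphism with the shadowing property by the previous paragraph. To see that $\Lambda$ has no isolated points: if $p \in \Lambda$ were isolated then $\{p\}$ is open, hence contains a periodic point by density of $\mathrm{Per}(X,T) \cap \Lambda$, so $p$ itself is periodic; its orbit is then a finite, open, closed, $T$-invariant subset of $\Lambda$, which by transitivity of $T_{|\Lambda}$ would have to be dense and thus equal to $\Lambda$, contradicting that $\Lambda$ is infinite. To see that $T_{|\Lambda}$ is not uniquely ergodic: density of $\mathrm{Per}(X,T) \cap \Lambda$ in the infinite set $\Lambda$ produces infinitely many periodic points, hence at least two distinct periodic orbits, hence at least two distinct $T_{|\Lambda}$-invariant ergodic measures. (Alternatively, since $h_{\mathrm{top}}(T_{|\Lambda}) > 0$, Proposition~\ref{non_wand_card_entropy} applied to $(\Lambda, T_{|\Lambda})$ already yields $\mathcal{H}(\Lambda, T_{|\Lambda}) \neq \emptyset$, which rules out unique ergodicity.)

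Having checked the hypotheses, I would conclude by applying Theorem~\ref{theorem-C}(e) to $(\Lambda, T_{|\Lambda})$: it gives that $\mathcal{C}\mathcal{I}(\Lambda, T_{|\Lambda})$ is Baire generic in $\Lambda$, and by the very definition of the completely irregular set this coincides with $\bigcap_{\varphi \in \mathcal{H}(\Lambda, T_{|\Lambda})} \mathcal{I}(T_{|\Lambda}, \varphi)$, which is the assertion. The step that deserves genuine care rather than being pure bookkeeping is the claim that the restriction $T_{|\Lambda}$ still has the shadowing property, since this is not automatic for restrictions to invariant subsets; I would therefore be careful to cite the precise version of the spectral decomposition theorem that explicitly delivers shadowing on each basic set. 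Everything else is routine verification of hypotheses.
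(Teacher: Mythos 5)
Your proof is correct and follows essentially the same route as the paper's: the spectral decomposition of $\Omega(T)$ into finitely many disjoint transitive basic sets each inheriting the shadowing property, selection of a piece $\Lambda$ carrying the positive entropy (hence infinite), and an application of Theorem~\ref{theorem-C} to the restricted system after ruling out unique ergodicity. If anything, you are slightly more careful than the paper, since you explicitly check that $\Lambda$ has no isolated points, a hypothesis of Theorem~\ref{theorem-C} that the paper's proof leaves implicit.
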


\begin{proof}
Write, as in the previous argument, $\Omega(T) = F_{1} \cup \cdots \cup F_{k}$, where each $F_{i}$ is a closed $T$-invariant subset of $X$ for $i \in \{1,\cdots, k\}$, on each of which $T$ is topologically transitive and  such that $F_{i} \cap F_{j} = \emptyset$ for every $i\neq j$. As $h_{\mathrm{top}}(T, \Omega(T)) = \max_{i\, \in \,\{1,\cdots,k\}} h_{\mathrm{top}}(T|_{F_{i}}) > 0$, there exists $p \in \{1,\cdots,k\}$ such that $h_{\mathrm{top}}(T|_{F_{p}}) = h_{\mathrm{top}}(T) > 0$.

From \cite[Theorem 3.4.2]{AH94}, $T|_{\Omega(T)}\colon \Omega(T) \to \Omega(T)$ has the shadowing property. Therefore,
%since $F_{i}$´s is closed disjoint invariant subsets, we have that
$T|_{F_{i}}\colon F_{i}\to F_{i}$ is an expansive homeomorphism with the shadowing property, for every $i \in \{1,\cdots,k \}$. As $h_{\mathrm{top}}(T|_{F_{p}})  > 0$, by Proposition~\ref{non_wand_card_entropy} one has $\mathcal{H}(F_{p},T|_{F_{p}}) \neq \emptyset$ and $\#\,\Omega(T|_{F_{p}})$ is infinite. Hence $\#\,F_{p}$ is infinite because $\Omega(T|_{F_{p}}) \subseteq F_{p}$. In particular, $T|_{F_{p}}$ is a transitive homeomorphism with the shadowing property such that $\mathcal{H}(F_{p},T|_{F_{p}}) \neq \emptyset$. Thus $T|_{F_{p}}$ is not uniquely ergodic. By Theorem~\ref{theorem-C}, the set $\mathcal{C}\mathcal{I}(F_{p},\, T|_{F_{p}}) \,= \bigcap_{\varphi \,\in \,\mathcal{H}(F_{p},\,T|_{F_{p}})} \,{\mathcal I}(T|_{F_{p}},\,\varphi)$ is Baire generic in $F_{p}$. So $\Lambda = F_p$ has the claimed properties.

\end{proof}

\subsection{Transitivity}

We now summon Corollary~\ref{cor:ergodic} and Proposition~\ref{sha_trans_equi} to check whether expansive homeomorphisms with the shadowing property may have points whose set $V_T$ is maximum.

\begin{proposition}\label{exp_sha_trans}
Let $T\colon X \to X$ be an expansive homeomorphism of a compact metric space $(X,d)$ without isolated points which satisfies the shadowing property. Then the following assertions are equivalent:
\begin{itemize}
\item[$(i)$] $T$ is transitive.
\smallskip
\item[$(ii)$] $\big\{x\in X\colon \, V_T(x) = \mathcal{P}_{T}(X)\big\}$ is Baire generic in $X$.
\smallskip
\item[$(iii)$] $\big\{x\in X\colon\, V_T(x) = \mathcal{P}_{T}(X)\big\}$ is nonempty.
\end{itemize}
\end{proposition}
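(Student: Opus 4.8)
The plan is to prove the cycle of implications $(ii) \Rightarrow (iii) \Rightarrow (i) \Rightarrow (ii)$, since $(ii) \Rightarrow (iii)$ is immediate (a Baire generic subset of a nonempty compact metric space without isolated points is nonempty). For $(iii) \Rightarrow (i)$, I would invoke Corollary~\ref{cor:ergodic}(b): that statement asserts that if $\mathcal{M}(X,T) = X$ and $\{x \in X\colon V_T(x) = \mathcal{P}_T(X)\}$ is nonempty, then $T$ is transitive (and in fact the set is Baire generic). So the task reduces to verifying that the hypothesis $\mathcal{M}(X,T) = X$ holds automatically here. This is where the expansiveness and shadowing enter: if $V_T(x_0) = \mathcal{P}_T(X)$ for some $x_0$, then by Corollary~\ref{cor:ergodic}(a) applied with $K = \mathcal{P}_T(X)$ one has $\mathcal{M}(X,T) = \overline{\bigcup_{\mu} \supp\mu} \subseteq \omega(x_0) \subseteq X$; but also the same inclusion chain gives $\omega(x_0) \subseteq \overline{\{x\colon V_T(x) = \mathcal{P}_T(X)\}}$, so $x_0$ has dense orbit, hence $T$ is transitive (as $X$ has no isolated points), and then $\Omega(T) = X$. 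Combined with Lemma~\ref{lemma.center.measure.nonwandering} ($\mathcal{M}(X,T)\subseteq\Omega(T)$) and the fact that, by \cite[Corollary 1]{Moo11}, the minimal points are dense in $\Omega(T)$ under the shadowing property, we get $\mathcal{M}(X,T) = \Omega(T) = X$. (In truth, once we know $x_0$ has dense orbit, $(i)$ is already established; the measure-center identity is then a bonus that feeds $(i)\Rightarrow(ii)$.)

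For $(i) \Rightarrow (ii)$, suppose $T$ is transitive. Since $T$ has the shadowing property and $X$ has no isolated points, $\Omega(T) = X$, and the density of minimal points in $\Omega(T)$ (again \cite[Corollary 1]{Moo11}) together with $\AP(X,T)\subseteq \mathcal{M}(X,T)$ and the closedness of $\mathcal{M}(X,T)$ yields $\mathcal{M}(X,T) = X$. Next, Proposition~\ref{sha_trans_equi} gives that $\mathcal{P}_T^{erg}(X)$ is dense in $\mathcal{P}_T(X)$ (equivalently, by \cite[Theorem A]{LO18}). Now the hypotheses of Corollary~\ref{cor:ergodic}(d) are met: $\mathcal{M}(X,T) = X$ and $\mathcal{P}_T^{erg}(X)$ is dense in $\mathcal{P}_T(X)$, so $\{x\in X\colon V_T(x) = \mathcal{P}_T(X)\}$ is Baire generic in $X$, which is exactly $(ii)$.

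I expect essentially no serious obstacle: the entire argument is an assembly of results already available in the excerpt (Corollary~\ref{cor:ergodic}, Proposition~\ref{sha_trans_equi}, Lemma~\ref{lemma.center.measure.nonwandering}, \cite[Corollary 1]{Moo11}, \cite[Theorem A]{LO18}), with expansiveness playing no role at all beyond being part of the ambient hypotheses — indeed the statement holds for any continuous (surjective) map with the shadowing property on a compact metric space without isolated points. The one point requiring a little care is the direction $(iii)\Rightarrow(i)$: one must not circularly quote Corollary~\ref{cor:ergodic}(b) with its hypothesis $\mathcal{M}(X,T)=X$ unverified, so I would either first extract transitivity directly from the inclusion chain in Corollary~\ref{cor:ergodic}(a) (giving $x_0\in\Trans(X,T)$) and only afterwards note $\mathcal{M}(X,T)=X$, or else establish $\mathcal{M}(X,T)=X$ a priori from shadowing alone via Lemma~\ref{lemma.center.measure.nonwandering} plus density of minimal points — but that latter route needs $\Omega(T)=X$, which is not yet known under $(iii)$. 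Hence the clean order is: $(iii)$ $\Rightarrow$ (via Corollary~\ref{cor:ergodic}(a)) $x_0$ has dense orbit $\Rightarrow$ $(i)$; then $(i) \Rightarrow (ii)$ as above; then $(ii)\Rightarrow(iii)$ trivially.
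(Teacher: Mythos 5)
Your direction $(i)\Rightarrow(ii)$ is correct and coincides with the paper's argument (shadowing plus transitivity give $\mathcal{M}(X,T)=\Omega(T)=X$ and density of $\mathcal{P}_{T}^{erg}(X)$ via Proposition~\ref{sha_trans_equi}, after which Corollary~\ref{cor:ergodic}(d) applies), and $(ii)\Rightarrow(iii)$ is trivial. The gap is in $(iii)\Rightarrow(i)$. From Corollary~\ref{cor:ergodic}(a) with $K=\mathcal{P}_{T}(X)$ you obtain only
$$\mathcal{M}(X,T)\,=\,\overline{\bigcup_{\mu\,\in\,\mathcal{P}_{T}(X)}\supp\mu}\,\subseteq\,\omega(x_{0})\,\subseteq\,\overline{\big\{x\in X\colon V_T(x)=\mathcal{P}_{T}(X)\big\}},$$
and this does \emph{not} yield that $x_{0}$ has a dense orbit: it only says that $\omega(x_{0})$ contains the measure center, which could a priori be a proper closed invariant subset of $X$. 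The conclusion ``$\omega(x_{0})=X$'' in Corollary~\ref{cor:ergodic}(b) is extracted precisely from the extra hypothesis $\mathcal{M}(X,T)=X$, which is what you are trying to establish; and, as you yourself concede, deriving $\mathcal{M}(X,T)=X$ from shadowing via the density of minimal points in $\Omega(T)$ presupposes $\Omega(T)=X$, i.e.\ transitivity. So your ``clean order'' is circular at its first step, and the assertion that expansiveness plays no role is unsubstantiated.

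The paper closes exactly this gap by using expansiveness in an essential way: for an expansive homeomorphism with the shadowing property, $\Omega(T)$ admits a spectral decomposition $F_{1}\cup\cdots\cup F_{k}$ into pairwise disjoint closed $T$-invariant sets on each of which $T$ is topologically transitive, and $\omega(x_{0})$ is contained in a single piece $F_{p}$. Since every invariant measure has support inside $\omega(x_{0})\subseteq F_{p}$ (by Corollary~\ref{cor:ergodic}(a)), while any nonempty piece $F_{j}$ would carry an invariant measure supported in $F_{j}$, all pieces other than $F_{p}$ are empty; hence $\Omega(T)=F_{p}$ and $T|_{\Omega(T)}$ is transitive. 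Shadowing then gives $C_R(X,T)=\Omega(T)$, and transitivity of $T$ on all of $X$ follows from \cite[Theorem 1.1]{MM00}. If you wish to drop expansiveness you must supply a substitute for this decomposition step; as written, your proof of $(iii)\Rightarrow(i)$ does not go through.
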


\begin{proof}
$(i)\Rightarrow(ii)$ Suppose that $T$ is transitive. Then, as $X$ has no isolated points, $\Omega(T) = X$. By Lemma~\ref{lemma.center.measure.nonwandering}, we already know that $\mathcal{M}(X,T) \subseteq \Omega(T)$. Moreover, the set $\AP(X,T)$ of minimal points of $T$ is dense in $\Omega(T)$ because $T$ satisfies the shadowing property (cf. \cite[Corollary 1]{Moo11}). Since $\AP(X,T) \subseteq \mathcal{M}(X,T)$ and $\mathcal{M}(X,T)$ is closed, one concludes that $\mathcal{M}(X,T) = \Omega(T) = X$. Therefore, by Proposition~\ref{sha_trans_equi}, the set $\mathcal{P}_{T}^{erg}(X)$ is dense in $\mathcal{P}_{T}(X)$. Bringing together the properties $\mathcal{M}(X,T) = X$ and $\overline{\mathcal{P}_{T}^{erg}(X)} = \mathcal{P}_{T}(X)$, Corollary~\ref{cor:ergodic} yields that $\big\{x\in X\colon \, V_T(x) = \mathcal{P}_{T}(X)\big\}$ is Baire generic in $X$.

\smallskip

\begin{remark}\label{rem:Big VT} We note that, in the previous reasoning, we did not use the assumption that $T$ is an expansive homeomorphism. Actually, we have proved that, \emph{if $(X,T)$ is a compact metric space without isolated points and $T\colon X \to X$ is a transitive continuous map which satisfies the shadowing property, then $\big\{x\in X\colon \, V_T(x) = \mathcal{P}_{T}(X)\big\}$ is Baire generic in $X$}.
\end{remark}

\smallskip

\noindent $(ii)\Rightarrow(iii)$ This is clear.

\smallskip

\noindent $(iii)\Rightarrow(i)$ Suppose that $\big\{x\in X\colon\, V_T(x) = \mathcal{P}_{T}(X)\big\}$ is nonempty. Take $x_{0} \in X$ such that $V_T(x_{0}) = \mathcal{P}_{T}(X)$. Write, as in the argument used in Subsection~\ref{sse:ip-ent}, $\Omega(T) = F_{1} \cup \cdots \cup F_{k}$, where each $F_{i}$, for $i \in \{1,\cdots, k\}$, is a closed $T$-invariant subset of $X$, on each of which $T$ is topologically transitive and  such that $F_{i} \cap F_{j} = \emptyset$ for every $i\neq j$.

From \cite[Theorem 3.2.2]{AH94}, we know that there exists $F_{p}$ such that $\omega(x_{0}) \subseteq F_{p}$ for some $p \in \{1,\cdots,k\}$. Moreover, by item $(a)$ of Corollary \ref{cor:ergodic}, we have
$$\overline{\bigcup_{\mu\, \in \,\mathcal{P}_{T}(X)} \,\supp \mu}\,\subseteq\, \omega (x_{0}).$$

\begin{claim} $F_{j} = \emptyset$ for every $j \neq p$.
\end{claim}

If, otherwise, there exists $y \in F_{j}$ for some $j \neq p$, then, as $V_{T}(y) \neq \emptyset$, there exists an invariant measure $\eta$ such that $\supp \eta \subseteq \omega(y) \subseteq F_{j}$. However,
$$\supp \eta \,\subseteq\, \overline{\bigcup_{\mu\, \in \,\mathcal{P}_{T}(X)} \,\supp \mu}\,\subseteq\, \omega (x_{0}) \,\subseteq \,F_{p}$$
contradicting the fact that $F_{r} \cap F_{s} = \emptyset$ for every $r\neq s$.

Consequently, $\Omega(T) = F_{p}$ and $T_{F_{p}} = T_{\Omega(T)}$ is transitive. Since $T$ also satisfies the shadowing property, one has $C_R(X,T) = \Omega(T)$ (see \cite[Theorem 3.1.2]{AH94}). Hence $T|_{C_R(X,T)}$
%\colon C_R(X,T) \to C_R(X,T)$
is transitive. Therefore, $T$ is transitive in $X$ (cf. \cite[Theorem 1.1]{MM00}).

\end{proof}

\section{$\Phi$-chaotic dynamics}\label{se:chaotic}

Given a metric space $(Y,d)$, denote by $C^b(Y,\mathbb R)$ the set of continuous bounded real valued maps on $Y$ endowed with the supremum norm $\|\cdot\|_\infty$. Consider a sequence $\Phi = (\varphi_n)_{n \, \in\, \mathbb{N}} \in C^b(Y,\mathbb R)^{\mathbb N}$ and $y \in Y$. In what follows, $W_{\Phi}(y)$ stands for the set of accumulation points of the sequence $(\varphi_{n}(y))_{n\, \in \,\mathbb{N}}$.

\begin{definition}\cite[Definition 1]{CCSV21}\label{defTphisensitive}
\emph{Let $(Y,d)$ be a metric space and $\Phi \in C^b(Y,\mathbb R)^{\mathbb N}$. We say that $Y$ is $\Phi$-\emph{sensitive} if there exist dense subsets $A,\,B \subset Y$, where $B$ can be equal to $A$, and $\varepsilon > 0$ such that for any $(a,b) \in A \times B$ one has
$$\sup_{r\,\in \,W_{\Phi}(a), \,\, s \,\in \,W_{\Phi}(b)} \,\,|r-s| > \varepsilon.$$
In the particular case of the sequence $\Phi$ of Ces\`aro averages associated with a map $\varphi \in C^b(Y,\mathbb R)$ and a continuous transformation $T\colon Y \to Y$, namely
$$(\varphi_{n})_{n\, \in \,\mathbb{N}} \,=\, \Big(\frac{1}{n} \,\sum_{j=0}^{n-1} \,\varphi\circ T^{j}\Big)_{n \, \in \, \mathbb{N}}$$
we say that $Y$ is $(T,\varphi)$-\emph{sensitive} if $Y$ is $\Phi$-\emph{sensitive}, and write $W_{\varphi}$ instead of $W_{\Phi}$.}
\end{definition}

In \cite[Theorem 1.2]{CCSV21}, it was proved that if $Y$ is $\Phi$-\emph{sensitive} then the set
$${\mathcal I}(\Phi) \,=\, \Big\{y \in Y \colon \,\lim_{n\,\to \,+\infty} \,\varphi_n(y) \,\, \text{ does not exist} \,\Big\}$$
of the so called $\Phi$-irregular points is Baire generic in $Y$.
%\begin{theorem}\cite[Theorem 1.2]{CCSV21}\label{CV_mod}
%Let $(Y, d)$ be a Baire metric space and $\Phi = (\varphi_n)_{n \, \in \mathbb{N}} \in C^b(Y,\mathbb R)^{\mathbb N}$ be a sequence of continuous bounded real valued maps satisfying the condition $\limsup_{n\,\to\,+\infty} \,\|\varphi_n\|_\infty < +\infty.$ If $Y$ is $\Phi$-sensitive then ${\mathcal I}(\Phi)$ is Baire generic in $Y$. In particular, if $T\colon Y \to Y$ is a continuous map, $\varphi$ belongs to $C^b(Y,\mathbb R)$ and $Y$ is $(T,\varphi)$-sensitive, then ${\mathcal I}(T,\varphi)$ is a Baire generic subset of $Y$.
%\end{theorem}

The previous definition inspired the following one. Denote by $\mathfrak{P}(Y)$ the family of all subsets of $Y$ and by $\diam(A)$ the diameter of $A \subseteq X$.

\begin{definition}\label{defTphichaotic}
\emph{Given $\Phi \in C^b(Y,\mathbb R)^{\mathbb N}$, we say that $Y$ is $\Phi$-\emph{chaotic} if there exist $\mathfrak{F} \subseteq  \mathfrak{P}(Y)$ and $\varepsilon > 0$ such that for every nonempty open set $U$ in $Y$ there exist $a, b \in  \bigcup_{F \,\in\, \mathfrak{F}} F \cap U$, where $a$ and $b$ may be equal, such that
$$\diam(W_{\Phi}(a) \cup W_{\Phi}(b)) \,>\, \varepsilon.$$
In the particular case of a sequence $\Phi$ of Ces\`aro averages
$$(\varphi_{n})_{n\, \in \,\mathbb{N}} \,=\, \Big(\frac{1}{n} \,\sum_{j=0}^{n-1} \,\varphi\circ T^{j}\Big)_{n \, \in \, \mathbb{N}}$$
determined by a map $\varphi \in C^b(Y,\mathbb R)$ and a continuous transformation $T\colon Y \to Y$, we say that $Y$ is $(T,\varphi)$-\emph{chaotic} if the space $Y$ is $\Phi$-\emph{chaotic}, and write $W_{\varphi}$ instead of $W_{\Phi}$.}
\end{definition}

We note that $Y$ is $(T,\varphi)$-chaotic if and only if there exist a dense set $\mathcal{E}$ in $Y$ and $\varepsilon > 0$ such that for every nonempty open set $U$ in $Y$ there are $a, b \in  \mathcal{E} \cap U$, where $a$ and $b$ may be equal, such that $\diam(W_{\Phi}(a) \cup W_{\Phi}(b)) > \varepsilon.$ Indeed, the latter statement clearly implies Definition~\ref{defTphichaotic} if we use $\mathfrak{F} = \{\mathcal{E}\}$. Conversely, if $Y$ is $\Phi$-\emph{chaotic}, then we may take $\mathcal{E} = \bigcup_{F \,\in\, \mathfrak{F}} F$, which is dense in $Y$ since it intersects every nonempty open set $U \subset Y$, and there are $a, b \in  \mathcal{E} \cap U$, where $a$ and $b$ may be equal, such that $\diam(W_{\Phi}(a) \cup W_{\Phi}(b)) > \varepsilon$, as claimed.

\begin{theorem}\label{CV_mod_chaotic}
Let $(Y, d)$ be a Baire metric space and $\Phi = (\varphi_n)_{n \, \in\, \mathbb{N}} \in C^b(Y,\mathbb R)^{\mathbb N}$ be a sequence of continuous bounded maps such that $\limsup_{n\,\to\,+\infty} \,\|\varphi_n\|_\infty < +\infty$. If $Y$ is $\Phi$-chaotic then the irregular set of $\Phi$, defined by
$${\mathcal I}(\Phi) \,=\, \Big\{y \in Y \colon \,\lim_{n\,\to \,+\infty} \,\varphi_n(y) \,\, \text{ does not exist} \,\Big\}$$
is Baire generic in $Y$. In particular, if $T\colon Y \to Y$ is a continuous map, $\varphi$ belongs to $C^b(Y,\mathbb R)$ and $Y$ is $(T,\varphi)$-chaotic, then ${\mathcal I}(T,\varphi)$ is a Baire generic subset of $Y$.
\end{theorem}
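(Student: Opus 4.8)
The plan is to show that the $\Phi$-irregular set $\mathcal{I}(\Phi)$ contains a dense $G_\delta$ subset of $Y$, which suffices since $Y$ is a Baire space. First I would fix $\varepsilon>0$ and the family $\mathfrak{F}$ (equivalently the dense set $\mathcal{E}=\bigcup_{F\in\mathfrak{F}}F$) provided by the hypothesis that $Y$ is $\Phi$-chaotic, and set $M=\limsup_{n\to+\infty}\|\varphi_n\|_\infty<+\infty$; discarding finitely many terms of the sequence we may assume $\|\varphi_n\|_\infty\le M+1$ for all $n$, so that every orbit of averages $(\varphi_n(y))_{n\in\mathbb N}$ lives in the compact interval $[-(M+1),M+1]$ and hence $W_\Phi(y)$ is always a nonempty compact set. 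The key observation is that $y\notin\mathcal{I}(\Phi)$ precisely when $(\varphi_n(y))_n$ converges, i.e. when $\diam W_\Phi(y)=0$; so it is enough to produce a dense $G_\delta$ set on which $\diam W_\Phi(y)\ge\varepsilon/3$ (say).

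Next I would write the target set as a countable intersection of open dense sets. For each $N\in\mathbb N$ define
$$
G_N \;=\; \Big\{\, y\in Y \;:\; \exists\, m,n\ge N \text{ with } |\varphi_m(y)-\varphi_n(y)| > \tfrac{\varepsilon}{3}\,\Big\}.
$$
Since each $\varphi_k$ is continuous, the condition $|\varphi_m(y)-\varphi_n(y)|>\varepsilon/3$ defines an open set, and $G_N$ is a union of such sets over $m,n\ge N$, hence open. Moreover $\bigcap_{N\ge1}G_N\subseteq\mathcal{I}(\Phi)$: if $y$ lies in every $G_N$ then for each $N$ there are indices $m,n\ge N$ with $|\varphi_m(y)-\varphi_n(y)|>\varepsilon/3$, so $(\varphi_n(y))_n$ is not Cauchy and does not converge. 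Thus it remains only to prove that each $G_N$ is dense in $Y$.

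To see density of $G_N$, fix a nonempty open set $U\subseteq Y$ and $N\in\mathbb N$; I must find a point of $U$ lying in $G_N$. Apply the $\Phi$-chaotic hypothesis to $U$: there are $a,b\in\mathcal{E}\cap U$ (possibly $a=b$) with $\diam\big(W_\Phi(a)\cup W_\Phi(b)\big)>\varepsilon$. Pick accumulation values $r\in W_\Phi(a)$ and $s\in W_\Phi(b)$ with $|r-s|>\varepsilon$. By the triangle inequality at least one of $\diam W_\Phi(a)$, $\diam W_\Phi(b)$, or the gap forces a point among $a,b$ whose average sequence has two subsequential limits at distance $>\varepsilon/3$ from each other: concretely, since $|r-s|>\varepsilon$ and $W_\Phi(a)\ni r$, $W_\Phi(b)\ni s$, either $W_\Phi(a)$ already has diameter $>\varepsilon/3$, or $W_\Phi(b)$ does, or every point of $W_\Phi(a)$ is within $\varepsilon/3$ of $r$ and every point of $W_\Phi(b)$ within $\varepsilon/3$ of $s$ — but the last case contradicts nothing unless $a=b$, in which case $r,s\in W_\Phi(a)$ and $\diam W_\Phi(a)>\varepsilon>\varepsilon/3$; thus in all cases one of $a,b$, call it $c$, satisfies $\diam W_\Phi(c)>\varepsilon/3$. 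Then $W_\Phi(c)$ contains two values $u,v$ with $|u-v|>\varepsilon/3$, so by definition of accumulation point there are arbitrarily large indices $m$ with $\varphi_m(c)$ near $u$ and arbitrarily large indices $n$ with $\varphi_n(c)$ near $v$; choosing $m,n\ge N$ makes $|\varphi_m(c)-\varphi_n(c)|>\varepsilon/3$, so $c\in G_N\cap U$. This proves $G_N$ is dense, and by the Baire category theorem $\bigcap_{N\ge1}G_N$ is a dense $G_\delta$ contained in $\mathcal{I}(\Phi)$, so $\mathcal{I}(\Phi)$ is Baire generic. The specialization to $(T,\varphi)$-chaotic follows immediately by taking $\Phi$ to be the sequence of Ces\`aro averages, whose sup-norms are bounded by $\|\varphi\|_\infty$. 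The only mildly delicate point is the bookkeeping in the case $a=b$ versus $a\ne b$ when extracting a single point $c$ with $\diam W_\Phi(c)>\varepsilon/3$; handling $\diam(W_\Phi(a)\cup W_\Phi(b))$ uniformly (rather than splitting cases) via a single triangle-inequality estimate is the cleanest route and is where I would take the most care.
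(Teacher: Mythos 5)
Your global strategy (complement of the sets $\Lambda_N$ written as open sets $G_N$, plus the Baire category theorem) is the same as the paper's, and the inclusion $\bigcap_{N}G_N\subseteq \mathcal{I}(\Phi)$ is fine. The proof breaks, however, at the density step. The assertion that $\diam\big(W_{\Phi}(a)\cup W_{\Phi}(b)\big)>\varepsilon$ forces one of $a,b$ to satisfy $\diam W_{\Phi}(c)>\varepsilon/3$ is false when $a\neq b$: take $W_{\Phi}(a)=\{0\}$ and $W_{\Phi}(b)=\{2\varepsilon\}$. Then the union has diameter $2\varepsilon>\varepsilon$, yet both $a$ and $b$ are \emph{regular} points, so neither can serve as the point $c\in G_N\cap U$ you are looking for (at least for all large $N$). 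You in fact notice this yourself (``the last case contradicts nothing unless $a=b$'') but then conclude ``thus in all cases\dots'', which is a non sequitur; the problematic case $a\neq b$ with two small, far-apart limit sets is exactly the one left unhandled, and it cannot be handled by selecting a point from $\{a,b\}$.

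The missing ingredient is the continuity of the individual maps $\varphi_N$, which is what couples the behaviour at $a$ to the behaviour at $b$. The paper argues by contradiction: if some ball were contained in $\Lambda_N=Y\setminus G_N$, one first shrinks it to a ball $B(y_0,r)$ on which $\varphi_N$ oscillates by less than $\eta$ (with $0<\eta<\varepsilon/3$). For any $y\in\Lambda_N$ every accumulation value of $(\varphi_n(y))_n$ lies within $\eta$ of $\varphi_N(y)$; hence for the points $a,b\in B(y_0,r)$ supplied by the $\Phi$-chaotic hypothesis one gets, for all $r_a\in W_{\Phi}(a)$ and $r_b\in W_{\Phi}(b)$,
$$|r_a-r_b|\,\leq\, |r_a-\varphi_N(a)|+|\varphi_N(a)-\varphi_N(b)|+|\varphi_N(b)-r_b|\,\leq\, 3\eta\,<\,\varepsilon,$$
and similarly $\diam W_{\Phi}(a),\,\diam W_{\Phi}(b)\leq 2\eta$, so $\diam\big(W_{\Phi}(a)\cup W_{\Phi}(b)\big)\leq 3\eta<\varepsilon$, contradicting the hypothesis. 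In other words, density of $G_N$ is obtained not by exhibiting a point of $G_N$ inside an arbitrary open set, but by showing that no ball can avoid $G_N$, using the near-constancy of $\varphi_N$ on small balls. To repair your proof you should replace the ``pick $c\in\{a,b\}$'' step by this contradiction argument.
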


\begin{proof}

The following argument is a direct adaptation of the one used to show \cite[Theorem 1.2]{CCSV21}. Suppose that there exist $\mathfrak{F} \subseteq  \mathfrak{P}(Y)$ and $\varepsilon > 0$ such that for any open set $U$ in $Y$ there exist $a, b \in  \bigcup_{F \,\in\, \mathfrak{F}} F \cap U$, where $a$ and $b$ may be equal, such that
$$\diam(W_{\Phi}(a) \cup W_{\Phi}(b)) > \varepsilon.$$
Fix $0 <\eta < \frac{\varepsilon}{3}$. Since the maps $\varphi_n$ are continuous, given an integer $N \in \mathbb{N}$ the set
$$\Lambda_{N} =  \Big\{y \in Y\colon \,|\varphi_{n}(y) - \varphi_{m}(y) | \leqslant \eta \quad \forall  \,m,n \geqslant N \Big\}$$
is closed in $Y$. Moreover:

\begin{lemma}\label{eq:interior.empty}
$\Lambda_{N}$ has empty interior for every $N \in \mathbb{N}$.
\end{lemma}

\begin{proof}
Assume that there exists $N \in \mathbb{N}$ such that $\Lambda_{N}$ has nonempty interior (which we abbreviate into $\intt (\Lambda_{N})\neq \emptyset$). Hence there exist $y_0$ and $\xi>0$ such that $B(y_0,\xi)  \subseteq   \intt (\Lambda_{N})$, where $B(y_0,\xi)=\{y \in Y\colon d(y,b) < \xi\}$. Since $\varphi_{N}$ is continuous in $y_0$, there exists $\delta_{N} >0$ such that
$$y \in B(y_0,\delta_{N}) \quad \Rightarrow \quad |\varphi_{N}(y_0) - \varphi_{N}(y)| <  \eta/3 \quad \quad \forall\, y \in Y.$$
Take $r = \min \{\xi, \delta_{N}\}$. Then,
$$B(y_0,r) \, \subseteq \,B(y_0,\xi) \, \subseteq \,  \intt (\Lambda_{N})$$
and
$$|\varphi_{N}(y) - \varphi_{N}(z)| <  \eta \quad \quad \forall\, y,z \in B(y_0,r).$$
By assumption, there are $\varepsilon>0$ and $a, b \in  \bigcup_{F \,\in \, \mathfrak{F}} \big(F\cap B(y_0,r)\big)$, where $a$ and $b$ may be equal, such that
$$\diam(W_{\Phi}(a) \cup W_{\Phi}(b)) > \varepsilon.$$
Moreover, as $a, b \in B(y_0,r)$, one has $|\varphi_{N}(a) - \varphi_{N}(b)| < \eta$.
\medskip

\noindent \textbf{Case 1:} There exists $(r_{a},r_{b}) \in  W_{\Phi}(a) \times W_{\Phi}(b)$
%$(r_{a},r_{b}) \in \{\varphi_{n}(a)\colon  n \in \mathbb{N}\}' \times \{\varphi_{n}(z):  n \geq 1 \}'$
satisfying $|r_{a} - r_{b}| > \varepsilon$.
\medskip

According to the definition of $\Lambda_{N}$, one has
$$|\varphi_{n}(a) - \varphi_{m}(a) | \leqslant \eta \quad \quad \text{ and } \quad \quad |\varphi_{n}(b) - \varphi_{m}(b)| \leqslant \eta \quad \quad \forall\, m, n \geqslant N.$$
Fixing $m = N$, taking the limit as $n$ goes to $+\infty$ in the first inequality along a subsequence of $\big(\varphi_{n}(a)\big)_{n \, \in \, \mathbb{N}}$ converging to $r_{a}$ and taking the limit as $n$ tends to $+\infty$ in the second inequality along a subsequence of $\big(\varphi_{n}(b)\big)_{n \, \in \, \mathbb{N}}$ convergent to $r_{b}$, we conclude that
$$|r_{a} - \varphi_{N}(a)| \leqslant \eta \quad \quad \text{ and }\quad \quad |r_{b} - \varphi_{N}(b)| \leqslant \eta.$$
Therefore,
$$\varepsilon \,<\, |r_{a} - r_{b}| \,\leqslant\, |r_a - \varphi_{N}(a)| +  |\varphi_{N}(a) - \varphi_{N}(b)|+ |\varphi_{N}(b) - r_{b}| \,\leqslant\, 3 \eta$$
contradicting the choice of $\eta$. So, $\Lambda_{N} $ have empty interior.
\medskip

\noindent \textbf{Case 2:} There does not exist $(r_{a},r_{b}) \in  W_{\Phi}(a) \times W_{\Phi}(b)$ satisfying $|r_{a} - r_{b}| > \varepsilon$.
\medskip

Since $\diam(W_{\Phi}(a) \cup W_{\Phi}(b)) > \varepsilon$, either there exist $s_a, t_a \in  W_{\Phi}(a)$ such that $|s_a-t_a| > \varepsilon$, or there are $s_{b}, t_b \in  W_{\Phi}(b)$ such that $|s_b-t_b| > \varepsilon$. Assume that the former holds (the reasoning is similar if the latter holds instead).
%Suppose that there exist $r_{x},\widehat{r_{x}} \in  W_{\Phi}(x)$ such that  $|r_{x}-\widehat{r_{x}}| > \varepsilon$.
According to the definition of $\Lambda_{N}$, one has
$$|\varphi_{n}(a) - \varphi_{m}(a) | \leqslant \eta.$$
% \quad \quad \text{ and } \quad \quad |\varphi_{n}(x) - \varphi_{m}(x)| \leqslant \eta \quad \quad \forall\, m, n \geqslant N.$$
Fixing $m = N$, taking the limit as $n$ goes to $+\infty$ in this inequality along a subsequence of $\big(\varphi_{n}(a)\big)_{n \, \in \,\mathbb{N}}$ converging to $s_a$ and taking the limit as $n$ tends to $+\infty$ in the same inequality along a subsequence of $\big(\varphi_{n}(a)\big)_{n \, \in \, \mathbb{N}}$ convergent to $t_a$, we conclude that
$$|s_a - \varphi_{N}(a)| \leqslant \eta \quad \quad \text{ and }\quad \quad |t_a - \varphi_{N}(a)| \leqslant \eta.
$$
Therefore,
$$\varepsilon \,<\, |s_a - t_a| \,\leqslant\, |s_a -\varphi_{N}(a)| + |\varphi_{N}(a) - t_a|  \,\leqslant\, 2 \eta$$
contradicting the choice of $\eta$. Again, this conclusion indicates that $\Lambda_{N} $ has empty interior.
\end{proof}

Let us resume the proof of Theorem~\ref{CV_mod_chaotic}. From the assumption $\limsup_{n\,\to\,+\infty} \|\varphi_n\|_\infty <+\infty$ one deduces that
$$Y \setminus {\mathcal I}(\Phi) \,\subset\, \bigcup_{N=1}^{\infty} \Lambda_{N}.$$
Thus, by Lemma~\ref{eq:interior.empty}, the set of $\Phi$-regular points (that is, those points for which $\Phi$ is a convergent sequence) is contained in a countable union of closed sets with empty interior. Thus ${\mathcal I}(\Phi)$ is Baire generic in $Y$. The second statement of Theorem~\ref{CV_mod_chaotic} is just a particular case of the first one.
\end{proof}

Given $y_0 \in Y$, let $\mathcal O_T(y_0)^- = \,\big\{y \in Y \colon \exists \, n \in \mathbb{N} \colon T^n(y) = y_0 \big\}$ be the pre-orbit of $y_0$ by $T$. We say that $x \in Y$ is $T$-associated with $y \in Y$, and denote this equivalence relation by $x \sim_T y$, if either $y \in  \mathcal{O}_T(x)$ or $x \in \mathcal{O}_T(y)$. We note that, if $[x] = \{y \in Y \colon x \sim y \}$, then
$$\mathcal{O}_T(x) \cup \mathcal O_T(x)^-  \,=\, [x].$$
Regarding these equivalence classes, Theorem~\ref{CV_mod_chaotic} has the following immediate consequence which improves the statements %$(iii) \Rightarrow (i)$
of \cite[Corollary 1.7]{CCSV21} and \cite[Remark 10.1]{CCSV21}.

\begin{corollary}\label{cor_irregular_dense}
Let $(Y, d)$ be a Baire metric space, $T\colon Y \to Y$ be a continuous map and $\varphi \in C^{b}(Y,\mathbb{R})$. Then:
\begin{itemize}
\item[$(a)$] If there exist a dense set $D$ in $Y$ and $\varepsilon>0$ such that $\diam(W_{\Phi}(x)) > \varepsilon$ for every $x$ in $D$, then ${\mathcal I}(T,\varphi)$ is Baire generic in $Y$.
\smallskip
\item[$(b)$] If there exist $x_{1}, \cdots, x_{n} \in I(T,\varphi)$ such that $\bigcup_{i=1}^{n} \,[x_{i}]$ is dense in $Y$, then ${\mathcal I}(T,\varphi)$ is Baire generic in $Y$.
\end{itemize}
\end{corollary}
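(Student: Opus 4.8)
The plan is to derive both items from Theorem~\ref{CV_mod_chaotic}. Observe first that for the Ces\`aro sequence $\Phi=(\varphi_n)_{n\in\mathbb N}$ with $\varphi_n=\frac1n\sum_{j=0}^{n-1}\varphi\circ T^j$ one has $\|\varphi_n\|_\infty\le\|\varphi\|_\infty$ for every $n$, so the growth hypothesis $\limsup_{n\to+\infty}\|\varphi_n\|_\infty<+\infty$ required by Theorem~\ref{CV_mod_chaotic} holds automatically. Hence in each case it suffices to exhibit the data making $Y$ a $(T,\varphi)$-chaotic space in the sense of Definition~\ref{defTphichaotic}.

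For item $(a)$ I would apply Definition~\ref{defTphichaotic} with $\mathfrak{F}=\{D\}$ and the given $\varepsilon>0$: for any nonempty open set $U\subset Y$, density of $D$ provides a point $a\in D\cap U$, and choosing $b=a$ (which the definition allows) gives $\diam(W_\Phi(a)\cup W_\Phi(b))=\diam(W_\varphi(a))>\varepsilon$. Thus $Y$ is $(T,\varphi)$-chaotic, and Theorem~\ref{CV_mod_chaotic} yields that ${\mathcal I}(T,\varphi)$ is Baire generic in $Y$.

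For item $(b)$ the key point is the invariance of $W_\varphi$ along orbits and pre-orbits. The telescoping identity $\varphi_n(T(y))-\varphi_n(y)=\frac1n\big(\varphi(T^n(y))-\varphi(y)\big)$ together with boundedness of $\varphi$ gives $W_\varphi(T(y))=W_\varphi(y)$, hence $W_\varphi(T^k(y))=W_\varphi(y)$ for all $k\ge 0$ by induction; since $[x_i]=\mathcal{O}_T(x_i)\cup\mathcal{O}_T(x_i)^-$, this propagates to $W_\varphi(z)=W_\varphi(x_i)$ for every $z\in[x_i]$ (immediately when $z\in\mathcal{O}_T(x_i)$, and via $W_\varphi(x_i)=W_\varphi(T^k(z))=W_\varphi(z)$ when $T^k(z)=x_i$). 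As each $x_i\in{\mathcal I}(T,\varphi)$, the bounded sequence $(\varphi_n(x_i))_n$ does not converge, so $\liminf_n\varphi_n(x_i)<\limsup_n\varphi_n(x_i)$ and $\diam(W_\varphi(x_i))>0$. I would then set $\varepsilon=\tfrac12\min_{1\le i\le n}\diam(W_\varphi(x_i))>0$ and $D=\bigcup_{i=1}^n[x_i]$, which is dense in $Y$ by hypothesis and satisfies $\diam(W_\varphi(z))>\varepsilon$ for every $z\in D$; item $(a)$ then applies and finishes the proof. The reasoning is essentially formal, so no serious obstacle is expected: the only step deserving care is the identity $W_\varphi(T(y))=W_\varphi(y)$ and its extension to the whole class $[x_i]$, which is a routine consequence of the telescoping estimate above.
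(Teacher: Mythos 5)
Your proof is correct and follows essentially the same route as the paper: item $(a)$ is a direct instance of Definition~\ref{defTphichaotic} with $\mathfrak{F}=\{D\}$, and item $(b)$ reduces to it via the orbit-invariance of $W_\varphi$ on the classes $[x_i]$ and a positive lower bound on the diameters. If anything you are slightly more careful than the paper, which takes $\varepsilon=\min_i\diam(W_\varphi(x_i))$ rather than half of it (so the strict inequality in the definition of $(T,\varphi)$-chaotic would need a word) and leaves the telescoping identity $W_\varphi(T(y))=W_\varphi(y)$ implicit.
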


\begin{proof}
Item $(a)$ is clear: the assumption on the existence and properties of $D$ is a particular case of Definition~\ref{defTphichaotic}, thus Theorem~\ref{CV_mod_chaotic} holds if we take $\mathfrak{F} = \{D\}$ and $\varepsilon$. Concerning item $(b)$, consider
$$\varepsilon \,=\, \min\big\{\diam (W_{\varphi}(x_{i}))\colon  1 \,\leq\, i \,\leq \,n\big\}$$
which is positive since each $x_{i}$ is in $I(T,\varphi)$ and the set $\{x_{1}, \cdots, x_{n}\}$ is finite. Then apply Theorem~\ref{CV_mod_chaotic} using the set $\mathfrak{F} = \big\{[x_{1}], \cdots, [x_{n}]\big\}$ and the previous $\varepsilon$.
\end{proof}

\subsection{$\Phi$-sensitive \emph{vs.} $\Phi$-chaotic}

It is immediate that, if $Y$ is $\Phi$-sensitive, then it is $\Phi$-chaotic. Indeed, given $\varepsilon >0$ and dense subsets $A$ and $B$ satisfying Definition~\ref{defTphisensitive}, we just have to take $\mathfrak{F} = \{A, B\}$ and $\varepsilon$ to verify the definition of $\Phi$-chaotic.

In case $(Y, d)$ is a Baire metric space, $T\colon Y \to Y$ is a continuous map with a dense orbit and $\varphi \in C^b(Y,\mathbb R)$, we already know from \cite[Corollary 1.7]{CCSV21} that $Y$ is $(T,\varphi)$-sensitive if and only if the set ${\mathcal I}(T,\varphi)$ is Baire generic in $Y$. Since, by Theorem~\ref{CV_mod_chaotic}, the set ${\mathcal I}(T,\varphi)$ is also Baire generic when $Y$ is $(T,\varphi)$-chaotic, we conclude that, when $T$ has a dense orbit, then the notions $(T,\varphi)$-sensitive and $(T,\varphi)$-chaotic are equivalent. We will now prove that the properties $\Phi$-sensitive and $\Phi$-chaotic are actually equivalent.

\begin{proposition}
Let $(Y, d)$ be a Baire metric space and $\Phi = (\varphi_n)_{n \, \in \,\mathbb{N}} \in C^b(Y,\mathbb R)^{\mathbb N}$ be a sequence of continuous bounded maps such that $\limsup_{n\,\to\,+\infty} \,\|\varphi_n\|_\infty < +\infty$. Then $Y$ is $\Phi$-chaotic if and only if $Y$ is $\Phi$-sensitive.
\end{proposition}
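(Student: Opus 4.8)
The plan is to prove the two implications separately; only one of them has content. For ``$\Phi$-sensitive $\Rightarrow$ $\Phi$-chaotic'' I would just invoke the observation already recorded before the statement: if dense sets $A,B$ and $\varepsilon>0$ witness $\Phi$-sensitivity, then $\mathfrak{F}=\{A,B\}$ and the same $\varepsilon$ witness $\Phi$-chaoticity, since $\diam(W_\Phi(a)\cup W_\Phi(b))\ge\sup_{r\in W_\Phi(a),\,s\in W_\Phi(b)}|r-s|>\varepsilon$ for all such $a,b$. For the converse I would first extract the consequences of $\Phi$-chaoticity that are genuinely usable: by Theorem~\ref{CV_mod_chaotic} the set $\mathcal{I}(\Phi)$ is Baire generic in $Y$, so $\{y:\liminf_n\varphi_n(y)=\limsup_n\varphi_n(y)\}$ is meager; writing $f=\liminf_n\varphi_n$, $g=\limsup_n\varphi_n$, the hypothesis $\limsup_n\|\varphi_n\|_\infty<+\infty$ places every $W_\Phi(y)$ inside a fixed compact interval $J$, with $f(y),g(y)\in W_\Phi(y)$ and $f\le g$. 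Using the elementary bound $\diam(P\cup Q)\le 3\sup_{p\in P,q\in Q}|p-q|$ together with the ``dense set'' reformulation of $\Phi$-chaoticity (a dense $\mathcal{E}$ in place of $\mathfrak{F}$), I obtain a constant $\varepsilon_1=\varepsilon/3>0$ such that for every nonempty open $U\subseteq Y$ there are $a,b\in\mathcal{E}\cap U$, possibly equal, with $g(b)-f(a)>\varepsilon_1$.

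Next I would fix a partition of $J$ into finitely many intervals $J_1<\dots<J_L$ of length $<\varepsilon_1/3$ and set $\mathcal{A}_i=\{y\in Y:W_\Phi(y)\cap J_i\ne\emptyset\}$. The previous clause assigns to every nonempty open $U$ a ``distant pair'' $(i,j)$ with $i<j$, $\dist(J_i,J_j)>\varepsilon_1/3$, and $\mathcal{A}_i\cap U\ne\emptyset\ne\mathcal{A}_j\cap U$ (take $i,j$ with $f(a)\in J_i$ and $g(b)\in J_j$). Since there are only finitely many distant pairs, shrinking balls around a point and a pigeonhole argument show that the closed sets $\overline{\mathcal{A}_i}\cap\overline{\mathcal{A}_j}$ cover $Y$; applying the Baire property of $Y$ inside an arbitrary nonempty open set, the open sets $O_{ij}=\interior(\overline{\mathcal{A}_i}\cap\overline{\mathcal{A}_j})$ then have dense union in $Y$. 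Thus, relative to a dense open subset of $Y$, one has a distant pair $(i,j)$ for which $\mathcal{A}_i$ and $\mathcal{A}_j$ are both locally dense.

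The hard part will be to upgrade this \emph{local} data (for each open set, \emph{some} distant interval-pair works) into a \emph{single} pair of dense sets $A,B$ and one constant $\varepsilon'>0$ valid on all of $Y$. The naive patching --- a maximal pairwise-disjoint family of nonempty open sets $O_\lambda\subseteq O_{i_\lambda j_\lambda}$, with $A=\bigcup_\lambda(\mathcal{A}_{i_\lambda}\cap O_\lambda)$, $B=\bigcup_\lambda(\mathcal{A}_{j_\lambda}\cap O_\lambda)$ --- produces dense $A,B$ and handles pairs $(a,b)$ lying over the same $O_\lambda$, but not the cross-pairs, because the ``left'' interval $J_{i_\lambda}$ of one piece may sit close to the ``right'' interval $J_{j_\mu}$ of another. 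Overcoming this mismatch is where I expect the difficulty to concentrate, and I would attack it by exploiting at once: (i) that the constant $\varepsilon$ in $\Phi$-chaoticity is \emph{uniform} over all open sets, which bounds the ``scales'' at which chaos can occur and, on any open set carrying no dense family of points that are individually fat (i.e.\ with $\diam W_\Phi>\varepsilon_1/2$), forces $f$ (equivalently $g$) to oscillate by more than $\varepsilon_1$ on every open subset; (ii) the compactness of $J$, hence the finiteness of the partition; and (iii) the meagerness of $\{f=g\}$, which gives $\overline{\{f<\theta\}}\cup\overline{\{g>\theta\}}=Y$ for every $\theta\in\mathbb{R}$.

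Combining these, the goal is to produce a single threshold $\theta_0$ and a $\delta>0$ with $\{y:f(y)<\theta_0-\delta\}$ and $\{y:g(y)>\theta_0+\delta\}$ both dense in $Y$ (on the ``fat'' regions one instead simply takes $A=B$ equal to the dense set of fat points and patches in). Setting $A=\{y:f(y)<\theta_0-\delta\}$ and $B=\{y:g(y)>\theta_0+\delta\}$, both dense, one has for every $a\in A$ and $b\in B$ the accumulation values $f(a)\in W_\Phi(a)$ and $g(b)\in W_\Phi(b)$ with $g(b)-f(a)>2\delta$, hence $\sup_{r\in W_\Phi(a),\,s\in W_\Phi(b)}|r-s|>2\delta$, so $Y$ is $\Phi$-sensitive with $\varepsilon'=2\delta$. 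The main obstacle is thus not any single estimate but the globalization in the previous paragraph: assembling the finitely many locally valid interval-pairs into one consistent left/right splitting of a dense set.
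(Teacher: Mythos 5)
Your easy direction is fine and matches the paper. The converse, however, has a genuine gap, and you have located it yourself: the ``globalization'' of the locally valid interval-pairs into a single left/right splitting with one threshold $\theta_0$ is never carried out, and the ingredients (i)--(iii) you list do not obviously assemble into it. The root of the difficulty is that you extract too little from $\Phi$-chaoticity: invoking Theorem~\ref{CV_mod_chaotic} only gives that $\{y\colon \liminf_n\varphi_n(y)=\limsup_n\varphi_n(y)\}$ is meager, i.e.\ a generic point has \emph{some} positive oscillation, with no uniform lower bound. This forces you into the partition-of-$J$ and patching machinery, where the cross-pair problem you describe is real.

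The missing idea is that the \emph{proof} of Theorem~\ref{CV_mod_chaotic} (Lemma~\ref{eq:interior.empty}) gives a uniform statement: for any $0<\eta<\varepsilon/3$, every $\Lambda_N=\{y\colon |\varphi_n(y)-\varphi_m(y)|\leq\eta\ \forall m,n\geq N\}$ is closed with empty interior, and $\{y\colon \limsup_n\varphi_n(y)-\liminf_n\varphi_n(y)<\eta\}\subseteq\bigcup_N\Lambda_N$. Hence the set $A$ of points with oscillation at least $\eta$ is Baire generic, so dense. This kills the scenario your construction is designed to handle: if the ``individually fat'' points failed to be dense in some open set, that open set would be covered by the $\Lambda_N$'s and Baire would give one of them nonempty interior, contradicting the lemma (continuity of a single $\varphi_N$ on a small ball is incompatible with the chaoticity constant $\varepsilon$ there, whether the diameter is realized within one $W_\Phi$ or across two). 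With $A=B$ this single generic set, sensitivity follows from an elementary case analysis: for $a,b\in A$ and any $\gamma_b\in W_\Phi(b)$, the two values $\liminf_n\varphi_n(a)$ and $\limsup_n\varphi_n(a)$ both lie in $W_\Phi(a)$ and differ by at least $\eta$, so one of them is at distance more than $\eta/4$ from $\gamma_b$. No threshold, no partition of $J$, and no patching are needed. As written, your argument does not constitute a proof of the hard direction.
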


\begin{proof}
It is already clear that if $Y$ is $\Phi$-sensitive then $Y$ is $\Phi$-chaotic. Suppose now that $Y$ is $\Phi$-chaotic, that is, there exist a dense set $\mathcal{E}$ in $Y$ and $\varepsilon > 0$ such that for every nonempty open set $U$ in $Y$ there are $a, b \in  \mathcal{E} \cap U$, where $a$ and $b$ may be equal, such that $\diam(W_{\Phi}(a) \cup W_{\Phi}(b)) > \varepsilon.$

We start by observing that the argument to prove Theorem~\ref{CV_mod_chaotic} also shows that
%the set
%$$\big\{y \in Y\colon \,\limsup_{n \, \to \, + \infty}\, \varphi_{n}(y) - \liminf_{n \, \to \, + \infty}\, \varphi_{n}(y) \,>\, \eta\big\}$$
%is Baire generic in $Y$. In fact, its reasoning proves that
$$\Big\{y \in Y\colon \, \limsup_{n \, \to \, + \infty}\, \varphi_{n}(y) - \liminf_{n \, \to \, + \infty}\, \varphi_{n}(y) \,< \,\eta \Big\}  \,\subseteq\, \bigcup_{N=1}^{+\infty} \,\Lambda_{N}.$$
Therefore, by Lemma~\ref{eq:interior.empty}, the set
$$A \, = \, \big\{y \in Y \colon \, \limsup_{n \, \to \, + \infty}\, \varphi_{n}(y) - \liminf_{n \, \to \, + \infty}\, \varphi_{n}(y) \,\geqslant\, \eta \big\}$$
is Baire generic in $Y$. We are left to show that this property implies that $Y$ is $\Phi$-sensitive.

Consider $a, \,b$ in the dense set $A$ and $\varepsilon = \eta/4$. Given $\gamma_b \in W_\Phi(b)$, one has:

\begin{itemize}
\item[(i)] Either $\gamma_b \,\leqslant\, \liminf_{n \, \to \, + \infty}\, \varphi_{n}(a)$, in which case
$$\big|\limsup_{n \, \to \, + \infty}\, \varphi_{n}(a) - \gamma_b\big| \,\geqslant\, \eta$$
since $\diam (W_\Phi(a)) \geqslant\, \eta$;
\smallskip

\item[(ii)] or $\gamma_b \,\geqslant\, \limsup_{n \, \to \, + \infty}\, \varphi_{n}(a)$, and similarly
$$\big|\liminf_{n \, \to \, + \infty}\, \varphi_{n}(a) - \gamma_b\big| \,\geqslant\, \eta;$$
\smallskip

\item[(iii)] or else
$$\liminf_{n \, \to \, + \infty}\, \varphi_{n}(a) \, < \, \gamma_b \, < \, \limsup_{n \, \to \, + \infty}\, \varphi_{n}(a)$$
and so
$$\big|\liminf_{n \, \to \, + \infty}\, \varphi_{n}(a) - \gamma_b\big| \,\geqslant\, \eta/3 \quad \quad \text{or} \quad \quad \big|\limsup_{n \, \to \, + \infty}\, \varphi_{n}(a) - \gamma_b\big| \,\geqslant\, \eta/3.$$
\end{itemize}

\noindent In all cases, we conclude that there are $r_a \in W_\Phi(a)$ and $r_b \in W_\Phi(b)$ such that $|r_a - r_b| > \varepsilon$. This completes the proof that $Y$ is $\Phi$-sensitive.
\end{proof}

\section{Proof of Theorem~\ref{theorem-D}}\label{se:pD}

We start by establishing a general result, from which we easily deduce a dynamical version.

\begin{theorem}\label{Theorem-Psi_f}
Let $(Y,d)$ be a Baire metric space, $(W,D)$ be a compact metric space, $\Phi = (\varphi_{n})_{n\, \in \mathbb{N}}$ be a sequence of continuous maps $\varphi_n\colon Y \to \mathbb{R}$ and $\mathcal{G}_{\Phi}\colon Y \to \mathcal{K}(W)$ be the set valued function induced by
$\Phi$, defined by
$$\mathcal{G}_{\Phi}(y)\, = \,\bigcap\limits_{n=1}^{+\infty}  \overline{\{\varphi_{k}(y)\colon \,k \geq n\}}.$$
Then there exists a Baire generic subset $S$ of $Y$ such that $\mathcal{G}_{\Phi}$ is continuous in $S$.
\end{theorem}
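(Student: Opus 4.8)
The plan is to regard $\mathcal{G}_{\Phi}$ as a single‑valued map from the Baire space $Y$ into the compact metric space $(\mathcal{K}(W),d_{H})$ and to prove that its set of continuity points is residual. That set is automatically a $G_{\delta}$: with $\operatorname{osc}(\mathcal{G}_{\Phi},y)=\inf_{V\ni y}\,\operatorname{diam}_{d_{H}}\mathcal{G}_{\Phi}(V)$ (infimum over neighbourhoods $V$ of $y$) it equals $\bigcap_{m\ge1}\{y\colon\operatorname{osc}(\mathcal{G}_{\Phi},y)<1/m\}$, and each set on the right is open. Since $Y$ is Baire, I would reduce to proving that each $\{y\colon\operatorname{osc}(\mathcal{G}_{\Phi},y)<\varepsilon\}$ is dense, equivalently that $\mathcal{G}_{\Phi}$ is cliquish: for every nonempty open $B\subseteq Y$ and every $\varepsilon>0$ there is a nonempty open $B'\subseteq B$ with $\operatorname{diam}_{d_{H}}\mathcal{G}_{\Phi}(B')<\varepsilon$.

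The structural tool is the decomposition $\mathcal{G}_{\Phi}=\lim_{n}G_{n}$, where $G_{n}(y)=\overline{\{\varphi_{k}(y)\colon k\ge n\}}$. First I would record the elementary facts: each $G_{n}(y)$ is a nonempty compact subset of $W$; the sets $G_{n}(y)$ decrease in $n$, so (nested nonempty compacta) $\mathcal{G}_{\Phi}(y)=\bigcap_{n}G_{n}(y)$ is nonempty and $d_{H}(G_{n}(y),\mathcal{G}_{\Phi}(y))\to0$; each $G_{n}$ is lower semicontinuous (if $w\in G_{n}(y_{0})\cap U$ with $U$ open, choose $k\ge n$ with $\varphi_{k}(y_{0})\in U$; then $\varphi_{k}^{-1}(U)$ is a neighbourhood of $y_{0}$ on which $G_{n}(\cdot)$ meets $U$); and $G_{n}$ is the pointwise limit of the \emph{continuous} finite‑valued maps $y\mapsto\{\varphi_{n}(y),\dots,\varphi_{N}(y)\}$ (continuity is clear, and convergence holds because $\bigcup_{N}\{\varphi_{n}(y),\dots,\varphi_{N}(y)\}$ is dense in the compact set $G_{n}(y)$), so $G_{n}$ is a set‑valued map of the first Baire class. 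Invoking the generic‑continuity results of \cite{WIN15} for lower semicontinuous (or, more generally, first‑Baire‑class) compact‑valued maps on a Baire space, each $G_{n}$ would be continuous at every point of a residual set $R_{n}$; put $R=\bigcap_{n}R_{n}$, still residual. This already yields the ``upper half'' of continuity of $\mathcal{G}_{\Phi}$ on $R$: given $y_{0}\in R$ and $\varepsilon>0$, pick $n$ with $d_{H}(G_{n}(y_{0}),\mathcal{G}_{\Phi}(y_{0}))<\varepsilon/2$ and a neighbourhood $V\ni y_{0}$ with $d_{H}(G_{n}(y),G_{n}(y_{0}))<\varepsilon/2$ on $V$; since $\mathcal{G}_{\Phi}(y)\subseteq G_{n}(y)$, it follows that $\mathcal{G}_{\Phi}(y)\subseteq(\mathcal{G}_{\Phi}(y_{0}))^{\varepsilon}$ for all $y\in V$, so $\mathcal{G}_{\Phi}$ is upper semicontinuous at every point of $R$.

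The hard part is the reverse inclusion $\mathcal{G}_{\Phi}(y_{0})\subseteq(\mathcal{G}_{\Phi}(y))^{\varepsilon}$ for \emph{all} $y$ near $y_{0}$, that is, lower semicontinuity of $\mathcal{G}_{\Phi}$ on a residual set. This is genuinely delicate: $\mathcal{G}_{\Phi}$ is only a set‑valued map of the second Baire class (a decreasing pointwise limit of first‑class maps), so semicontinuity arguments no longer suffice. What is needed is that the finite window $\{\varphi_{k}(y)\colon k\ge n\}$ approximate its own cluster set uniformly for $y$ in a neighbourhood, whereas the natural Baire‑category bookkeeping — the Borel sets $O_{n}^{\delta}=\{y\colon d_{H}(G_{n}(y),\mathcal{G}_{\Phi}(y))<\delta\}$ increase to $Y$, hence some $O_{n_{0}}^{\delta}$ is comeagre in some ball — only produces control on a comeagre subset of a ball, not on a neighbourhood. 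Bridging this gap is exactly what the set‑valued theory of \cite{WIN15} is designed for: applying its generic‑continuity statement for a compact‑valued map realized as the limit of a non‑increasing sequence of lower semicontinuous compact‑valued maps to $\mathcal{G}_{\Phi}=\lim_{n}G_{n}$ yields the desired residual set $S$ on which $\mathcal{G}_{\Phi}$ is continuous.

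Finally, Theorem~\ref{theorem-D} would follow from this by specializing $\Phi$ to the Ces\`aro sequence $\varphi_{n}=\tfrac1n\sum_{j=0}^{n-1}\varphi\circ T^{j}$, for which $\mathcal{G}_{\Phi}=W_{\varphi}$, and combining the continuity of $W_{\varphi}$ on $S$ with the $(T,\varphi)$‑chaoticity criterion for genericity of $\mathcal{I}(T,\varphi)$ developed in Section~\ref{se:chaotic}.
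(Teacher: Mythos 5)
There is a genuine gap, and it sits exactly where you flag the difficulty. Your first half is sound and even takes a slightly different route from the paper: writing $\mathcal{G}_{\Phi}$ as the decreasing limit of the lower semi-continuous maps $G_{n}(y)=\overline{\{\varphi_{k}(y)\colon k\ge n\}}$, applying Fort's theorem \cite{FORT51} to each $G_{n}$ and passing to the limit does yield a residual set on which $\mathcal{G}_{\Phi}$ is upper semi-continuous (the paper reaches the same conclusion through the sets $R_{L}$ and Lemmas~\ref{Lemma-regular-upper} and~\ref{lemma-R_A_Baire}). But your second half is not a proof: you correctly identify lower semi-continuity as the hard part and then dispose of it by ``applying the generic-continuity statement'' of \cite{WIN15} for decreasing limits of lower semi-continuous compact-valued maps --- which is essentially the theorem you were asked to prove. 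The missing idea is the upper regularization: $\widehat{\mathcal{G}_{\Phi}}(y)=\bigcap_{n}\overline{\bigcup_{a\in B(y,1/n)}\mathcal{G}_{\Phi}(a)}$ is upper semi-continuous on all of $Y$ (Lemma~\ref{Lemma-Upper}$(a)$), hence continuous on a residual set $S_{2}$ by Fort's theorem; it coincides with $\mathcal{G}_{\Phi}$ at every point where $\mathcal{G}_{\Phi}$ is upper semi-continuous (the pointwise version of Lemma~\ref{Lemma-Upper}$(b)$), i.e.\ on the residual set $S_{1}$ produced by your first half; and on $S=S_{1}\cap S_{2}$ the restriction of $\mathcal{G}_{\Phi}$ therefore agrees with the restriction of a map continuous at each point of $S$. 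This short assembly is what your appeal to \cite{WIN15} is standing in for.

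A second, more structural problem is your opening reduction to cliquishness. The set of points at which $\mathcal{G}_{\Phi}$ is continuous \emph{as a map on all of $Y$} need not be residual --- it is typically empty in the very situations the theorem is meant for. Take the full shift and a potential $\varphi$ separating two periodic measures: periodic points (where $W_{\varphi}$ is a singleton) and points with $W_{\varphi}$ equal to the maximal interval $[m,M]$ are both dense, so upper semi-continuity fails at every point of the first kind, lower semi-continuity fails at every point of the second, and $\operatorname{osc}(W_{\varphi},\cdot)\geq (M-m)/2$ at every point of $X$. Hence the sets $\{y\colon\operatorname{osc}(\mathcal{G}_{\Phi},y)<\varepsilon\}$ you propose to prove dense are empty for small $\varepsilon$. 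The statement that survives, and the one the paper proves and later uses (see the proof of Corollary~\ref{Cor-T-Psi}, where only $y\in B(y_{0},\delta_{0})\cap\mathcal{Z}$ is quantified over), is continuity of the \emph{restriction} $\mathcal{G}_{\Phi}|_{S}$. Your oscillation plan targets the stronger, generally false statement, so even with a correct lower semi-continuity argument it could not be completed as written.
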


Let $(Y, d)$ be a Baire metric space, $T\colon Y \to Y$ be a continuous function and consider $\varphi \in C^b(Y,\mathbb R)$. Recall from Definition~\ref{def:Wphi} that, for every $y \in Y$,
$$W_{\varphi}(y) \, \,= \,\, \bigcap\limits_{N=1}^{+\infty} \, \overline{\Big\lbrace  \frac{1}{n}\,\sum\limits_{j=0}^{n-1} \,\varphi(T^{j}(y))\colon \, n \geq N \Big\rbrace}.$$
%\begin{eqnarray*}
%W_{\varphi}\colon \quad  Y &\to& \mathcal{K}([\inf \varphi, \sup \varphi])\\
%\quad y &\mapsto& \bigcap\limits_{N=1}^{+\infty} \, \overline{\Big\lbrace  \frac{1}{n}\,\sum\limits_{j=0}^{n-1} \,\varphi(T^{j}(y))\colon \, n \geq N \Big\rbrace}.
%\end{eqnarray*}
Theorem~\ref{Theorem-Psi_f} may be rephrased in this setting as follows.

\begin{corollary}\label{Cor_Psi}
Let $(Y, d)$ be a Baire metric space, $T\colon Y \to Y$ be a continuous map and $\varphi \in C^b(Y,\mathbb R)$. There exists a Baire generic subset $S$ of $Y$ such that the map $W_{\varphi}$ is continuous in $S$.
\end{corollary}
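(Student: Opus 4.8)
The plan is to obtain Corollary~\ref{Cor_Psi} as a direct specialization of Theorem~\ref{Theorem-Psi_f}. Since $\varphi \in C^b(Y,\mathbb R)$, the numbers $\inf_{Y}\varphi$ and $\sup_{Y}\varphi$ are finite, so
$$W \,:=\, [\,\inf_{Y}\varphi,\; \sup_{Y}\varphi\,]$$
is a nonempty compact metric space for the usual distance on $\mathbb R$. For each $n \in \mathbb N$ put
$$\varphi_{n}\colon Y \to \mathbb R, \qquad \varphi_{n}(y) \,=\, \frac{1}{n}\sum_{j=0}^{n-1}\varphi(T^{j}(y)),$$
and let $\Phi = (\varphi_{n})_{n \,\in\, \mathbb N}$.

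First I would check that the hypotheses of Theorem~\ref{Theorem-Psi_f} are met for these choices. The space $(Y,d)$ is Baire by assumption. Each $\varphi_{n}$ is continuous, being a finite sum of the continuous maps $\frac{1}{n}\,\varphi\circ T^{j}$. Moreover, for every $y \in Y$ and every $n$, the value $\varphi_{n}(y)$ is a convex combination of $\varphi(T^{0}(y)),\dots,\varphi(T^{n-1}(y))$, each of which lies in $W$; hence $\varphi_{n}(y) \in W$. Consequently $\overline{\{\varphi_{k}(y)\colon k \ge n\}}$ is, for every $n$, a nonempty compact subset of $W$; these sets decrease with $n$, so their intersection
$$\mathcal{G}_{\Phi}(y) \,=\, \bigcap_{n=1}^{+\infty}\overline{\{\varphi_{k}(y)\colon k \ge n\}}$$
is again a nonempty compact subset of $W$. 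Thus $\mathcal{G}_{\Phi}$ is a well-defined set valued function from $Y$ to $\mathcal{K}(W)$, and Theorem~\ref{Theorem-Psi_f} applies to it.

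Next I would observe that $\mathcal{G}_{\Phi} = W_{\varphi}$: comparing the displayed formula for $\mathcal{G}_{\Phi}(y)$ with Definition~\ref{def:Wphi}, the two are literally the same nested intersection of closures, with $n$ and $N$ playing the role of the same dummy lower index. Hence the Baire generic set $S \subseteq Y$ furnished by Theorem~\ref{Theorem-Psi_f}, on which $\mathcal{G}_{\Phi}$ is continuous as a map into $(\mathcal{K}(W), d_{H})$, is precisely a Baire generic set on which $W_{\varphi}$ is continuous, which is the assertion of Corollary~\ref{Cor_Psi}.

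I do not expect a genuine obstacle in this deduction: all the substance lies in Theorem~\ref{Theorem-Psi_f} (proved in Section~\ref{se:pD} by summoning the results on set valued functions of \cite{WIN15}), and here one only carries out the translation above. The one point that deserves care is that it is exactly the hypothesis $\varphi \in C^b(Y,\mathbb R)$ which confines the whole sequence of Birkhoff averages to a single fixed compact set $W$; this is what makes the codomain $\mathcal{K}(W)$ — and hence the Hausdorff distance $d_{H}$ and the very statement that $W_{\varphi}$ is continuous in $S$ — meaningful, and what lets Theorem~\ref{Theorem-Psi_f} be invoked in this form.
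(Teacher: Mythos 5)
Your proposal is correct and follows exactly the route the paper intends: the paper presents Corollary~\ref{Cor_Psi} as an immediate rephrasing of Theorem~\ref{Theorem-Psi_f}, obtained by taking $\varphi_n$ to be the Ces\`aro averages and $W=[\inf\varphi,\sup\varphi]$, which is precisely your specialization. Your added verifications (that each $\varphi_n(y)$ lies in $W$ because it is a convex combination of values of $\varphi$, and that $\mathcal{G}_{\Phi}$ coincides with $W_{\varphi}$) are the right details to make the translation explicit.
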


The relevance of Corollary~\ref{Cor_Psi} is due to the fact that, whenever the map $T$ is transitive, we may detect the existence of a Baire generic subset of irregular points with respect to a fixed potential $\varphi \in C^b(Y,\mathbb R)$ through continuity properties of the map $W_\varphi$ -- and this is precisely the statement of Theorem~\ref{theorem-D}.

\begin{corollary}\label{Cor-T-Psi}
Let $(Y, d)$ be a Baire metric space, $T\colon Y \to Y$ be a transitive continuous map and $\varphi \in C^b(Y,\mathbb R)$. The following assertions are equivalent:
\begin{enumerate}
%\item[$(a)$] ${\mathcal I}(T,\varphi)$ is a Baire space in itself and dense in $Y$.
\item[$(a)$] ${\mathcal I}(T,\varphi)$ is Baire generic in $Y$.
\smallskip
\item[$(b)$] There exists a dense subset $\mathcal{Z}$ of $Y$ such that $\mathcal{Z} \cap {\mathcal I}(T,\varphi) \neq \emptyset$ and $W_{\varphi}$ is lower semi-continuous in $\mathcal{Z}$.
%\item[$(d)$] There exists a Baire generic subset $S$ of $Y$ such that $S \cap {\mathcal I}(T,\varphi) \neq \emptyset$ and $W_{\varphi}$ is lower semi-continuous in $S$.
\end{enumerate}
\end{corollary}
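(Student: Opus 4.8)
The plan is to prove the two implications separately, noting that transitivity is only needed for $(b)\Rightarrow(a)$; the implication $(a)\Rightarrow(b)$ is a soft consequence of the generic continuity of $W_{\varphi}$.

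For $(a)\Rightarrow(b)$, I would invoke Corollary~\ref{Cor_Psi} to obtain a Baire generic subset $S$ of $Y$ at every point of which the set-valued map $W_{\varphi}$ is continuous, hence in particular lower semi-continuous. Assuming $(a)$, the set ${\mathcal I}(T,\varphi)$ is Baire generic, and since $Y$ is a Baire space the intersection $\mathcal{Z}:=S\cap {\mathcal I}(T,\varphi)$ is again Baire generic, therefore dense and nonempty. It then satisfies $\mathcal{Z}\cap {\mathcal I}(T,\varphi)=\mathcal{Z}\neq\emptyset$, and $W_{\varphi}$ is lower semi-continuous at each point of $\mathcal{Z}$ because $\mathcal{Z}\subseteq S$. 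This gives $(b)$.

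The substantive direction is $(b)\Rightarrow(a)$, which I would prove by checking that $Y$ is $(T,\varphi)$-chaotic in the sense of Definition~\ref{defTphichaotic} and then quoting Corollary~\ref{cor_irregular_dense}$(a)$ (itself a consequence of Theorem~\ref{CV_mod_chaotic}). First I would fix $z_{0}\in \mathcal{Z}\cap {\mathcal I}(T,\varphi)$; since $z_{0}$ is $(T,\varphi)$-irregular, $W_{\varphi}(z_{0})$ is a nondegenerate compact interval, so one may choose $p,q\in W_{\varphi}(z_{0})$ with $|p-q|=\diam W_{\varphi}(z_{0})=:\varepsilon_{0}>0$ and disjoint open intervals $V_{p}\ni p$, $V_{q}\ni q$ with $\dist(V_{p},V_{q})\geq \varepsilon_{0}/2$. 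Because $W_{\varphi}$ is lower semi-continuous at $z_{0}\in\mathcal{Z}$, there is an open neighbourhood $N$ of $z_{0}$ such that $W_{\varphi}(y)\cap V_{p}\neq\emptyset$ and $W_{\varphi}(y)\cap V_{q}\neq\emptyset$ for every $y\in N$ (intersect the two neighbourhoods coming from lower semi-continuity applied to $V_p$ and to $V_q$); hence $\diam W_{\varphi}(y) > \varepsilon:=\varepsilon_{0}/3$ for all $y\in N$. To pass from the open set $N$ to a dense set I would use that $W_{\varphi}$ is constant along orbits, i.e. $W_{\varphi}(T(y))=W_{\varphi}(y)$ for every $y$, since the empirical averages of $\varphi$ along the orbits of $y$ and of $T(y)$ differ by a quantity of size at most $2\|\varphi\|_{\infty}/n\to 0$. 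Consequently the set $A=\{y\in Y\colon \diam W_{\varphi}(y)>\varepsilon\}$ is saturated, meaning $y\in A\iff T(y)\in A$; since $N\subseteq A$, backward saturation gives $\mathcal{E}:=\bigcup_{n\geq 0}T^{-n}(N)\subseteq A$, and transitivity of $T$ forces $\mathcal{E}$ to meet every nonempty open subset of $Y$, so $\mathcal{E}$ is dense. Thus $Y$ admits a dense set $\mathcal{E}$ with $\diam W_{\varphi}(x)>\varepsilon$ for all $x\in\mathcal{E}$, and Corollary~\ref{cor_irregular_dense}$(a)$ yields that ${\mathcal I}(T,\varphi)$ is Baire generic in $Y$.

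The main obstacle I anticipate is the careful handling of the set-valued map $W_{\varphi}$ in the last paragraph: pinning down the exact notion of lower semi-continuity (in the sense used in \cite{WIN15}) so that the neighbourhood $N$ genuinely exists, and combining the orbit-invariance $W_{\varphi}(T(y))=W_{\varphi}(y)$ with the transitivity of $T$ to see that $\bigcup_{n\geq 0}T^{-n}(N)$ is dense. Once these points are secured, the reduction to the $(T,\varphi)$-chaotic criterion is immediate and the rest is routine.
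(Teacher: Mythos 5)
Your implication $(a)\Rightarrow(b)$ is exactly the paper's argument (take $\mathcal{Z}=S\cap{\mathcal I}(T,\varphi)$ with $S$ from Corollary~\ref{Cor_Psi}), and your overall strategy for $(b)\Rightarrow(a)$ --- get a uniform lower bound on $\diam W_{\varphi}$ near the irregular point of $\mathcal{Z}$, propagate it by transitivity using the orbit-invariance $W_{\varphi}(T(y))=W_{\varphi}(y)$, and invoke the $(T,\varphi)$-chaotic criterion of Theorem~\ref{CV_mod_chaotic} --- is also the paper's. But there is a genuine gap in the propagation step. You claim that lower semi-continuity at $z_{0}\in\mathcal{Z}$ produces a full open neighbourhood $N$ of $z_{0}$ with $\diam W_{\varphi}(y)>\varepsilon$ for \emph{every} $y\in N$, and your whole construction $\mathcal{E}=\bigcup_{n\geq 0}T^{-n}(N)\subseteq A$ hinges on the inclusion $N\subseteq A$. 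That reading of the hypothesis ``$W_{\varphi}$ is lower semi-continuous in $\mathcal{Z}$'' is the one relative to the ambient space $Y$; the version the paper uses (and, more importantly, the only version that the $(a)\Rightarrow(b)$ direction can actually certify, since Theorem~\ref{Theorem-Psi_f} only yields continuity of the \emph{restriction} of $\mathcal{G}_{\Phi}$ to $S$, not continuity at points of $S$ relative to $Y$) is the restricted one: the estimate $W_{\varphi}(z_{0})\subseteq W_{\varphi}(y)^{\varepsilon}$ is only available for $y\in N\cap\mathcal{Z}$. With only $N\cap\mathcal{Z}\subseteq A$, your argument breaks: transitivity controls preimages of the open set $N$, not of the (generally non-open) set $N\cap\mathcal{Z}$, so you cannot conclude that $\bigcup_{n\geq 0}T^{-n}(N)$ is contained in $A$, nor that $\bigcup_{n\geq 0}T^{-n}(N\cap\mathcal{Z})$ is dense. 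If you insist on the ambient reading of lower semi-continuity in $(b)$, then it is the forward direction $(a)\Rightarrow(b)$ that becomes unjustified, so under either reading one of your two implications has a hole.

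The repair is short and is what the paper does: work with forward orbits instead of preimages. Keep the ball $B(z_{0},\delta_{0})$ on which $\diam W_{\varphi}(y)\geq\varepsilon_{0}$ for all $y\in B(z_{0},\delta_{0})\cap\mathcal{Z}$, set $\mathcal{E}=\bigcup_{z\in\mathcal{Z}}\mathcal{O}_{T}(z)$ (dense because $\mathcal{Z}$ is), and, given a nonempty open $U$, use transitivity to find $k$ with $T^{-k}(U)\cap B(z_{0},\delta_{0})\neq\emptyset$; this intersection is open, so the density of $\mathcal{Z}$ supplies a point $z\in T^{-k}(U)\cap B(z_{0},\delta_{0})\cap\mathcal{Z}$, and then $a=T^{k}(z)\in U\cap\mathcal{E}$ satisfies $\diam W_{\varphi}(a)=\diam W_{\varphi}(z)\geq\varepsilon_{0}$. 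This verifies that $Y$ is $(T,\varphi)$-chaotic using only the restricted lower semi-continuity, after which your appeal to Theorem~\ref{CV_mod_chaotic} (equivalently Corollary~\ref{cor_irregular_dense}) goes through unchanged.
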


\subsection{Set valued functions}\label{sse:svf}

In this section we gather information from \cite{WIN15} on set valued functions. The content of this reference was stated for compact metric spaces, and one needs to adapt part of it to Baire metric spaces. This reformulation, though not straightforward, is nevertheless easy to establish, and we leave the details for the interested reader.

Let $(Y,d)$ and $(W,D)$ be metric spaces. Given $E \subset W$ and $\varepsilon >0$, denote by $E^\varepsilon$ the union $\bigcup_{e \,\in\, E} B(e,\varepsilon)$, where $B(e,\varepsilon) = \{w \in W\colon D(e,w) < \varepsilon\}$.
%stands for the open ball $\{w \in W\colon D(e,w) < \varepsilon\}$ centered at $e$ with radius $\varepsilon$.
Given non-empty subsets $A$ and $B$ of $Y$, define $d_{0}(A,B) = \sup_{b \,\in\, B} D(A, b)$, where $D(A,b) = \inf\,\{d(a,b) \colon \, a \in A\}$. Recall that $\mathcal{K}(W)$ stands for the collection of all nonempty compact subsets of $W$ endowed with the Hausdorff metric $d_{H}$.
% defined by
%$$d_{H}(A,B) \, = \,\max\{d_{0}(A,B),\, d_{0}(B,A)\} \quad \forall\, A, B \in \mathcal{K}(W).$$
Note that, if $(W,D)$ is compact, then $(\mathcal{K}(W), \,d_{H})$ is a compact metric space as well.

A function $\Psi\colon Y \to \mathcal{K}(W)$ is called a \emph{set valued function} on $Y$. Recall that such a map $\Psi$ is upper semi-continuous at $y_0 \in Y$ if given $\varepsilon >0$ there is $\delta>0$ such that $\Psi(y) \subseteq \Psi(y_0)^{\varepsilon}$ for every $y \in B(y_0,\delta)$. It is \emph{lower semi-continuous} at $y_0 \in Y$ if for every $\varepsilon >0$  there exists $\delta>0$ such that for every $y \in B(y_0,\delta)$ one has $\Psi(y_0) \subseteq \Psi(y)^{\varepsilon}$.
%A map $\Psi: Z \to \mathcal{K}(W)$ is continuous if it is upper semi-continuous and lower semi-continuous.
It is known that, if $\Psi$ is upper semi-continuous (or lower semi-continuous) then there exists a Baire generic subset $S$ of $Y$ such that $\Psi$ is continuous in $S$ (cf. \cite{FORT51}).

\begin{definition}
\emph{Suppose that $(W,D)$ is compact. Given a set valued function $\Psi\colon Y \to \mathcal{K}(W)$, the map
\begin{eqnarray*}
\widehat{\Psi}\colon Y &\to&  \mathcal{K}(W)\\
y &\mapsto& \bigcap_{n=1}^{+\infty} \,\overline{\bigcup_{a \,\in \,B(y,\frac{1}{n})} \Psi(a)}
\end{eqnarray*}
is called the \emph{upper regularization} of $\Psi$. Note that $\Psi(y) \subseteq \widehat{\Psi}(y)$ for every $y$ in $Y$.}
\end{definition}

\begin{lemma}[\cite{WIN15}]\label{Lemma-Upper}
Let $\Psi\colon Y \to \mathcal{K}(W)$ be an arbitrary set valued function and $\widehat{\Psi}$ be its upper regularization. Then:
\begin{itemize}
\item [$(a)$] $\widehat{\Psi}\colon Y \to  \mathcal{K}(W)$ is upper semi-continuous.
\item [$(b)$] If $\Psi$ is upper semi-continuous, then $\Psi = \widehat{\Psi}$.
\end{itemize}
\end{lemma}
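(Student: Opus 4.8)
The plan is to prove the two assertions separately, both by elementary neighborhood arguments exploiting the compactness of $W$ (in force here, since by the standing hypothesis in the definition of the upper regularization the space $(W,D)$ is compact). Throughout, abbreviate $F_n(y) = \overline{\bigcup_{a \,\in\, B(y,1/n)} \Psi(a)}$, so that $\widehat{\Psi}(y) = \bigcap_{n \geq 1} F_n(y)$ is a \emph{decreasing} intersection of nonempty closed, hence compact, subsets of $W$; in particular $\widehat{\Psi}(y)$ is a nonempty compact set, which is why $\widehat{\Psi}$ takes values in $\mathcal{K}(W)$. The monotonicity $F_{n+1}(y) \subseteq F_n(y)$ follows from $B(y,\tfrac1{n+1}) \subseteq B(y,\tfrac1n)$, and each $F_n(y)$ is nonempty because it contains $\Psi(y)$.

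For $(a)$, I would fix $y_0 \in Y$ and $\varepsilon > 0$ and proceed in two steps. First, a standard compactness observation: since $\big(F_n(y_0)\big)_n$ is a decreasing sequence of compact sets whose intersection $\widehat{\Psi}(y_0)$ lies in the open set $\widehat{\Psi}(y_0)^{\varepsilon}$, there exists $N$ with $F_N(y_0) \subseteq \widehat{\Psi}(y_0)^{\varepsilon}$ — otherwise $\bigcap_n \big(F_n(y_0) \setminus \widehat{\Psi}(y_0)^{\varepsilon}\big)$ would be a nested intersection of nonempty compacts, hence nonempty, contradicting $\widehat{\Psi}(y_0) \subseteq \widehat{\Psi}(y_0)^{\varepsilon}$. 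Second, a triangle-inequality comparison: if $d(y,y_0) < \tfrac1{2N}$ and $a \in B(y,\tfrac1{2N})$, then $d(a,y_0) < \tfrac1N$, so $B(y,\tfrac1{2N}) \subseteq B(y_0,\tfrac1N)$, whence $\widehat{\Psi}(y) \subseteq F_{2N}(y) \subseteq F_N(y_0) \subseteq \widehat{\Psi}(y_0)^{\varepsilon}$. Taking $\delta = \tfrac1{2N}$ establishes upper semi-continuity of $\widehat{\Psi}$ at $y_0$.

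For $(b)$, the inclusion $\Psi(y) \subseteq \widehat{\Psi}(y)$ is immediate from the definition (take $a = y$ in each union, so $\Psi(y) \subseteq F_n(y)$ for all $n$). For the reverse inclusion under the hypothesis that $\Psi$ is upper semi-continuous, I would fix $y_0 \in Y$, take $w \in \widehat{\Psi}(y_0)$ and $\varepsilon > 0$; by upper semi-continuity of $\Psi$ at $y_0$ there is $\delta > 0$ with $\Psi(a) \subseteq \Psi(y_0)^{\varepsilon}$ for all $a \in B(y_0,\delta)$, and choosing $n$ with $\tfrac1n < \delta$ gives $\bigcup_{a \,\in\, B(y_0,1/n)} \Psi(a) \subseteq \Psi(y_0)^{\varepsilon}$, hence $w \in F_n(y_0) \subseteq \overline{\Psi(y_0)^{\varepsilon}}$. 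Since the function $v \mapsto \dist(v,\Psi(y_0))$ is $1$-Lipschitz and bounded by $\varepsilon$ on $\Psi(y_0)^{\varepsilon}$, it is bounded by $\varepsilon$ on $\overline{\Psi(y_0)^{\varepsilon}}$, so $\dist(w,\Psi(y_0)) \leq \varepsilon$; as $\varepsilon > 0$ is arbitrary and $\Psi(y_0)$ is closed, $w \in \Psi(y_0)$. Thus $\widehat{\Psi}(y_0) \subseteq \Psi(y_0)$ and $\Psi = \widehat{\Psi}$.

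The argument is essentially routine; the only point demanding a little care is the passage in $(a)$ from ``the nested intersection lies in an open set'' to ``a single term of the sequence already lies in that open set'', which genuinely uses compactness of $W$, together with the minor bookkeeping with closures of $\varepsilon$-neighborhoods in $(b)$. No appeal to the Baire property of $Y$ is needed for this lemma — that enters only later, when combining it with \cite{FORT51} to locate the generic set of continuity points.
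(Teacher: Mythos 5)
Your proof is correct, and complete on all the points that matter: the verification that $\widehat{\Psi}$ indeed takes values in $\mathcal{K}(W)$, the upper semi-continuity in $(a)$, and both inclusions in $(b)$. Where it differs from the paper's own (unpublished, commented-out) argument is in part $(a)$: the paper proceeds by contradiction, assuming $\widehat{\Psi}$ fails to be upper semi-continuous at a point, extracting points $x_n \in \widehat{\Psi}(z_n)$ with $d\big(x_n,\widehat{\Psi}(z)\big) > \varepsilon$ and $z_n \to z$, passing to a convergent subsequence $x_n \to x$ by compactness of $W$, and then showing by a diagonal-type selection that $x \in \widehat{\Psi}(z)$, which contradicts the distance lower bound. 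You instead argue directly: the decreasing nested compacts $F_n(y_0)$ must eventually enter the open set $\widehat{\Psi}(y_0)^{\varepsilon}$ (finite intersection property), and then the ball inclusion $B(y,\tfrac{1}{2N}) \subseteq B(y_0,\tfrac{1}{N})$ gives $\widehat{\Psi}(y) \subseteq F_{2N}(y) \subseteq F_N(y_0) \subseteq \widehat{\Psi}(y_0)^{\varepsilon}$. Both arguments use compactness of $W$ in an essential way, but yours isolates it in a single clean step and avoids the subsequence bookkeeping, and it moreover produces an explicit modulus $\delta = \tfrac{1}{2N}$; the paper's sequential version is perhaps more routine to write down but harder to audit. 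Part $(b)$ is essentially identical in both treatments (the paper uses $\varepsilon/2$ where you handle the closure of the $\varepsilon$-neighborhood via the Lipschitz distance function; either works). Your closing remark that the Baire property of $Y$ plays no role in this lemma is also accurate --- it enters only in Lemma~\ref{lemma-R_A_Baire} and the appeal to \cite{FORT51}.
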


\begin{definition}
%Let $(Y,d)$ be a Baire metric space, $(W,D)$ be a compact metric space and $\Psi\colon Y \to \mathcal{K}(W)$ be a set valued function.
\emph{For each $L \in \mathcal{K}(W)$, set:
\begin{eqnarray*}
J(L, \Psi) &=& \big\{y \in Y\colon \, \Psi(y) \cap L \neq \emptyset\big\}\\
%\item[$(2i)$] $\widetilde{M(L, \Psi)} \,= \,\overline{M(L, \Psi)}\setminus M(L, \Psi).$
R_{L} &=&
% Y \setminus \widetilde{M(L, \Psi)} \,=\,
\big(Y \setminus \overline{J(L, \Psi)} \big) \cup J(L, \Psi)\\
R_{\Psi} &=& \bigcap\limits_{L \,\in\, \mathcal{K}(W)}\,  R_{L}
\end{eqnarray*}}
%Given $L \in \mathcal{K}(W)$, the points in $R_{L}$ are called $L$-\textbf{regular}; the elements of $R_\Psi$ are said to be $\Psi$-\textbf{regular}.}
\end{definition}

\begin{lemma}[\cite{WIN15}]\label{Lemma-regular-upper}
Let $(Y,d)$ be a Baire metric space, $(W,D)$ be a compact metric space and $\Psi\colon Y \to \mathcal{K}(W)$ be a set valued function. If $y_0$ belongs to $R_{\Psi}$, then $\Psi$ is upper semi-continuous at $y_0$.
\end{lemma}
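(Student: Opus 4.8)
I would prove the contrapositive-style statement by contradiction: assuming $\Psi$ fails to be upper semi-continuous at $y_0$, I would exhibit a single compact set $L \in \mathcal{K}(W)$ for which $y_0 \notin R_L$, contradicting $y_0 \in R_\Psi \subseteq R_L$. The set $L$ will be a small closed ball around the limit of a sequence of "escaping" values of $\Psi$.

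\textbf{Main steps.} First I would unwind the negation of upper semi-continuity at $y_0$ in its sequential form: there exist $\varepsilon>0$ and a sequence $(y_n)_{n\in\mathbb N}$ in $Y$ with $y_n \to y_0$ and $\Psi(y_n)\not\subseteq \Psi(y_0)^{\varepsilon}$. For each $n$ choose $w_n \in \Psi(y_n)$ with $D(\Psi(y_0),w_n)\geq \varepsilon$. Since $(W,D)$ is compact, after passing to a subsequence I may assume $w_n \to w$ for some $w\in W$; because the function $v\mapsto D(\Psi(y_0),v)$ is $1$-Lipschitz and $\Psi(y_0)$ is compact, passing to the limit yields $D(\Psi(y_0),w)\geq \varepsilon$. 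Next, set $L=\overline{B(w,\varepsilon/3)}$, which is a nonempty closed subset of the compact space $W$, hence $L\in\mathcal K(W)$. Since $w_n\to w$, for all large $n$ one has $w_n\in B(w,\varepsilon/3)\subseteq L$, so $\Psi(y_n)\cap L\neq\emptyset$, i.e. $y_n\in J(L,\Psi)$; as $y_n\to y_0$ this gives $y_0\in\overline{J(L,\Psi)}$. On the other hand $D(\Psi(y_0),w)\geq\varepsilon>\varepsilon/3$ forces $\Psi(y_0)\cap L=\emptyset$, that is, $y_0\notin J(L,\Psi)$. Combining the two, $y_0\notin\big(Y\setminus\overline{J(L,\Psi)}\big)\cup J(L,\Psi)=R_L$, contradicting $y_0\in R_\Psi\subseteq R_L$. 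Hence $\Psi$ is upper semi-continuous at $y_0$.

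\textbf{Expected obstacle.} There is no genuine difficulty here; the proof is essentially bookkeeping. The only points requiring a little care are: (i) the equivalence, in a metric space, between the $\delta$--$\varepsilon$ definition of upper semi-continuity and its sequential formulation, which I would invoke without fuss; and (ii) the choice of radii, ensuring $\varepsilon/3$ is small enough that $w_n\in L$ eventually while still large enough compared to $D(\Psi(y_0),w)\geq\varepsilon$ that $\Psi(y_0)\cap L$ remains empty. Once the radius $\varepsilon/3$ is fixed, everything closes immediately, so I would present this compactly rather than expanding the routine limit arguments.
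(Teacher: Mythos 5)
Your proof is correct and follows essentially the same strategy as the paper's: negate upper semi-continuity at $y_0$ and exhibit a single compact set $L$ with $y_0\in\overline{J(L,\Psi)}\setminus J(L,\Psi)$, contradicting $y_0\in R_{\Psi}\subseteq R_L$. The only difference is cosmetic: the paper takes $L=W\setminus\Psi(y_0)^{\varepsilon}$ directly (nonempty exactly because upper semi-continuity fails, and closed in the compact space $W$, hence in $\mathcal{K}(W)$), which bypasses your subsequence extraction and the auxiliary ball of radius $\varepsilon/3$.
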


%%%%%%%%%%%%%%%%%%%%%%
\iffalse
\begin{proof}
Suppose that $z$ is $\Psi$-regular and $\Psi$ is not upper semi-continuous in $z$. There exist $\varepsilon>0$, $z_{n} \in Z$ such that $\Psi(z_{n}) \cap  X \setminus \Psi(z)^{\varepsilon} \neq \emptyset$ and $d(z_{n},z) < \frac{1}{n}$ for any $n \geq 1$. Note that $A = X \setminus \Psi(z)^{\varepsilon} \in \mathcal{K}(X)$ and then $z \notin M(A)$.  So, there exist $\varepsilon>0$, $z_{n} \in Z$ such that $\Psi(z_{n}) \cap A \neq \emptyset$ and $d(z_{n},z) < \frac{1}{n}$ for any $n \geq 1$. This implies that $z_{n} \in M(A)$, and then $z \in \overline{M(A)}$, because $(z_{n})_{n\geq 1}$ converges to $z$. We obtain that $z \in \overline{M(A)} \setminus M(A) = \widetilde{M(A)}$. But $z \in R$, in particular, $z \in R_{A} = Z \setminus \widetilde{M(A)}$. It is a contradiction.
\end{proof}

\fi
%%%%%%%%%%%%%%%%%%%%%

\begin{definition}
%Let $(Y,d)$ be a Baire metric space and $(W,D)$ be a compact metric space.
\emph{Let $\Phi = (\varphi_{n})_{n \,\in\, \mathbb{N}}$ be a sequence of continuous maps $\varphi_{n}\colon Y \to W$. The \emph{set valued function induced by $\Phi$}, which we denote by $\mathcal{G}_\Phi\colon Y \to \mathcal{K}(W)$, is given by
$$\mathcal{G}_\Phi(y) \,= \,\bigcap\limits_{n=1}^{+\infty} \, \overline{\{\varphi_{k}(y)\colon \, k \geq n\}}.$$}
\end{definition}

\begin{lemma}[\cite{WIN15}]\label{lemma-R_A_Baire}
Let $(Y,d)$ be a Baire metric space, $(W,D)$ be a compact metric space, $\Phi = (\varphi_{n})_{n \,\in\, \mathbb{N}}$ be a sequence of continuous maps $\varphi_{n}\colon Y \to W$, $\mathcal{G}_\Phi\colon Y \to \mathcal{K}(W)$ be the set valued function induced by $\Phi$ and $L \in \mathcal{K}(W)$. Then both $R_{L}$ and $R_{\mathcal{G}_\Phi}$ are Baire generic in $Y$.
\end{lemma}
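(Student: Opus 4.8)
The plan is to prove the two assertions separately, first establishing that $R_L$ is Baire generic for a fixed $L\in\mathcal{K}(W)$, and then reducing the (a priori uncountable) intersection $R_{\mathcal{G}_\Phi}=\bigcap_{L\in\mathcal{K}(W)}R_L$ to a countable one.

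For a fixed compact $L$, the first step is to unwind $J:=J(L,\mathcal{G}_\Phi)$. Since $(W,D)$ is compact, $\mathcal{G}_\Phi(y)$ is the nonempty compact set of accumulation points of the sequence $(\varphi_n(y))_{n\in\mathbb{N}}$, and a routine subsequence-extraction argument shows that $\mathcal{G}_\Phi(y)\cap L\neq\emptyset$ if and only if for all $m,N\in\mathbb{N}$ there exists $n\geq N$ with $D(\varphi_n(y),L)<1/m$. Because each function $y\mapsto D(\varphi_n(y),L)$ is continuous, this presents $J$ as $\bigcap_{m}\bigcap_{N}\bigcup_{n\geq N}\{y\colon D(\varphi_n(y),L)<1/m\}$, a countable intersection of open sets; write $J=\bigcap_{k}U_k$ with $U_1\supseteq U_2\supseteq\cdots$ open. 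Then $Y\setminus R_L=\overline{J}\setminus J=\bigcup_{k}(\overline{J}\setminus U_k)$, and each $\overline{J}\setminus U_k$ is closed with empty interior, because a nonempty open subset of $\overline{J}$ must meet $J\subseteq U_k$. Hence $Y\setminus R_L$ is meager and, $Y$ being a Baire space, $R_L$ is Baire generic.

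The heart of the argument, and the step I expect to be the main obstacle, is showing $R_{\mathcal{G}_\Phi}=\bigcap_{L\in\mathcal{L}}R_L$ for a suitable countable family $\mathcal{L}\subseteq\mathcal{K}(W)$. I would fix a countable dense subset $\{w_i\}_{i\in\mathbb{N}}$ of $W$ and take $\mathcal{L}$ to be the (countable) collection of all finite unions of closed balls $\overline{B}(w_i,q)$ with $q\in\mathbb{Q}^{+}$. The inclusion $R_{\mathcal{G}_\Phi}\subseteq\bigcap_{L\in\mathcal{L}}R_L$ is immediate. For the converse, suppose $y_0\in R_L$ for every $L\in\mathcal{L}$ but $y_0\notin R_{L^*}$ for some $L^*\in\mathcal{K}(W)$; then $y_0\in\overline{J(L^*,\mathcal{G}_\Phi)}$ while $\mathcal{G}_\Phi(y_0)\cap L^*=\emptyset$. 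As $\mathcal{G}_\Phi(y_0)$ and $L^*$ are disjoint compacta, $\delta:=\dist(\mathcal{G}_\Phi(y_0),L^*)>0$; covering $L^*$ by finitely many balls $B(w_i,q)$ with $w_i$ close to points of $L^*$ and $q\in\mathbb{Q}^{+}$ small enough that $\overline{B}(w_i,q)\subseteq (L^*)^{\delta/2}$, and using compactness of $L^*$ to pass to a finite subcover, I obtain $L'\in\mathcal{L}$ with $L^*\subseteq L'\subseteq (L^*)^{\delta/2}$. Then $J(L^*,\mathcal{G}_\Phi)\subseteq J(L',\mathcal{G}_\Phi)$, so $y_0\in\overline{J(L',\mathcal{G}_\Phi)}$, and $y_0\in R_{L'}$ forces $\mathcal{G}_\Phi(y_0)\cap L'\neq\emptyset$, hence $\mathcal{G}_\Phi(y_0)$ meets $(L^*)^{\delta/2}$, contradicting $\dist(\mathcal{G}_\Phi(y_0),L^*)=\delta$.

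Granting this reduction, $R_{\mathcal{G}_\Phi}=\bigcap_{L\in\mathcal{L}}R_L$ is a countable intersection of Baire generic sets in the Baire space $Y$, hence Baire generic, which finishes the proof. The only subtle points are the exact form of the countable family $\mathcal{L}$ (it must be rich enough to squeeze an arbitrary compact $L^*$ between itself and an arbitrarily small neighbourhood of $L^*$) and the verification that the balls chosen to build $L'$ genuinely cover $L^*$; both are handled by density of $\{w_i\}$ together with the compactness of $L^*$.
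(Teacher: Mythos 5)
Your proof is correct and follows essentially the same route as the paper's: you identify $J(L,\mathcal{G}_\Phi)$ as a countable intersection of open sets (so that $\overline{J}\setminus J$ is meagre and $R_L$ is residual), and you reduce the uncountable intersection defining $R_{\mathcal{G}_\Phi}$ to a countable one by approximating an arbitrary $L^*\in\mathcal{K}(W)$ from outside by members of a countable family squeezed inside a small neighbourhood of $L^*$. The only (harmless) divergences are cosmetic: you decompose $\overline{J}\setminus J=\bigcup_k(\overline{J}\setminus U_k)$ directly in $Y$ instead of passing through relative Baire category in the subspace $\overline{J}$, and by building finite unions of rational closed balls into the countable family from the start you avoid the pigeonhole step the paper uses to pass from single basis elements to finite unions.
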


\subsection{Proof of Theorem~\ref{Theorem-Psi_f}}

Let $(Y,d)$ be a Baire metric space, $(W,D)$ be a compact metric space, $\Phi = (\varphi_{n})_{n\, \in\, \mathbb{N}}$ be a sequence of continuous maps $\varphi_n\colon Y \to \mathbb{R}$ and $\mathcal{G}_{\Phi}\colon Y \to \mathcal{K}(W)$ be the set valued function induced by
$\Phi$. By Lemma~\ref{Lemma-regular-upper} and Lemma~\ref{lemma-R_A_Baire},
%and Proposition~\ref{Proposition-set_valued-Bg},
there exists a Baire generic subset $S_{1}$ of $Y$ such that $\mathcal{G}_{\Phi}$ is upper semi-continuous in $S_{1}$. Consider the upper regularization $\widehat{\mathcal{G}_{\Phi}}$ of $\mathcal{G}_{\Phi}$. By item $(a)$ of Lemma~\ref{Lemma-Upper}, the map
$\widehat{\mathcal{G}_{\Phi}}$ is upper semi-continuous in $Y$, and so
%Theorem~\ref{Theo-Fort}
there exists a Baire generic subset $S_{2}$ of $Y$ such that $\widehat{\mathcal{G}_{\Phi}}$ is continuous in
$S_{2}$ (cf. \cite{FORT51}). Therefore, $S = S_{1} \cap S_{2}$ is Baire generic in $Y$, and one has both $\widehat{\mathcal{G}_{\Phi}}$ continuous in $S$ and $\mathcal{G}_{\Phi}$ upper semi-continuous in $S$. Consequently, from item $(b)$ of Lemma~\ref{Lemma-Upper}, we know that $\mathcal{G}_{\Phi} = \widehat{\mathcal{G}_{\Phi}}$ in $S$. Hence $\mathcal{G}_{\Phi}$ is continuous in $S$.

\subsection{Proof of Corollary~\ref{Cor-T-Psi}} Let $(Y, d)$ be a Baire metric space, $T\colon Y \to Y$ be a transitive continuous map and $\varphi \in C^b(Y,\mathbb R)$.
\medskip

\noindent $(a) \Rightarrow (b)$: By Corollary~\ref{Cor_Psi}, there exists a Baire generic subset $S$ of $Y$ such that $W_{\varphi}$ is continuous in $S$. Thus, if ${\mathcal I}(T,\varphi)$ is Baire generic in $Y$, then so is $\mathcal{Z} = S \cap {\mathcal I}(T,\varphi)$ %is Baire generic in $S$
and $W_{\varphi}$ is continuous in $\mathcal{Z}$.
%$S \cap {\mathcal I}(T,\varphi) \neq \emptyset$ is Baire generic in $Y$ and
%In particular, $\mathcal{G}_{\Phi}$ is lower semi-continuous in $\mathcal{Z}$.

\noindent $(b) \Rightarrow (a)$:
%Suppose that there exists a Baire generic subset $S$ of $Y$ such that $S \cap {\mathcal I}(T,\varphi) \neq \emptyset$ and $\mathcal{G}_{\Phi}$ is lower semi-continuous in $S$. Since $Y$ is Baire space, $S$ is dense in $Y$ and $\mathcal{G}_{\Phi}$ is lower semi-continuous in $S$.
Suppose that there is a dense subset $\mathcal{Z}$ of $Y$ such that $\mathcal{Z} \cap {\mathcal I}(T,\varphi) \neq \emptyset$ and $W_{\varphi}$ is lower semi-continuous in $\mathcal{Z}$. Fix $y_0 \in \mathcal{Z}\cap {\mathcal I}(T,\varphi)$ and consider $\varepsilon_0 = \diam W_{\varphi}(y_0)/3.$ Note that $\varepsilon_0 > 0$ since $y_0 \in {\mathcal I}(T,\varphi)$. Therefore, there exists $\delta_0>0$ such that
$$\forall\, y \in B(y_0,\delta_0) \cap S \quad \quad W_{\varphi}(y_0) \subseteq W_{\varphi}(y)^{\varepsilon_0}.$$
Consequently,
$$\diam W_{\varphi}(y_0) \,\leq\, \diam W_{\varphi}(y) + 2\varepsilon_0 \,=\, \diam W_{\varphi}(y) + \frac{2}{3}\diam W_{\varphi}(y_0) \quad  \forall \,y \in B(y_0,\delta_0)\cap \mathcal{Z}$$
and so
\begin{equation}\label{eq:ineq}
\diam W_{\varphi}(y) \,\geq\, \frac{1}{3}\diam W_{\varphi}(y_0) \,= \, \varepsilon_0 \, > \, 0 \quad \forall \,y \in B(y_0,\delta_0)\cap \mathcal{Z}.
\end{equation}

\begin{claim} $Y$ is $(T,\varphi)$-chaotic.
\end{claim}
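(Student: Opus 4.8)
The plan is to verify Definition~\ref{defTphichaotic} through the reformulation recorded right after it: it is enough to produce $\varepsilon>0$ such that
$$G \,:=\, \big\{y \in Y \colon \, \diam W_{\varphi}(y) \,\geq\, \varepsilon\big\}$$
is dense in $Y$, for then $\mathfrak{F} = \{G\}$ works (given a nonempty open $U$, any $a \in G \cap U$ satisfies $\diam\big(W_{\varphi}(a)\cup W_{\varphi}(a)\big) \geq \varepsilon$, taking $b=a$). I would run this with $\varepsilon = \varepsilon_0/2$, where $\varepsilon_0 = \diam W_{\varphi}(y_0)/3$ is the constant already fixed.

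The crux is the observation that $W_{\varphi}$ is constant along $T$-orbits. Since $\varphi$ is bounded, for every $y \in Y$ and every $n \in \mathbb{N}$ one has $\big|\frac1n\sum_{j=0}^{n-1}\varphi(T^{j}(Ty)) - \frac1n\sum_{j=0}^{n-1}\varphi(T^{j}(y))\big| = \frac{|\varphi(T^{n}y)-\varphi(y)|}{n} \leq \frac{2\|\varphi\|_\infty}{n}$, which tends to $0$ as $n \to +\infty$; hence the sequences of Birkhoff averages of $\varphi$ along the orbits of $y$ and of $Ty$ have the same accumulation points, that is, $W_{\varphi}(Ty) = W_{\varphi}(y)$, and inductively $W_{\varphi}(T^{n}y) = W_{\varphi}(y)$ for every $n \geq 0$. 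In particular $y \mapsto \diam W_{\varphi}(y)$ is $T$-invariant, so $y \in G$ if and only if $T^{n}(y) \in G$; equivalently, $G = T^{-n}(G)$ for every $n \geq 0$.

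Next I would invoke \eqref{eq:ineq}: shrinking $\delta_0$ if necessary, lower semi-continuity of $W_{\varphi}$ at the point $y_0$ gives $W_{\varphi}(y_0) \subseteq W_{\varphi}(y)^{\varepsilon_0}$, hence $\diam W_{\varphi}(y) \geq \diam W_{\varphi}(y_0) - 2\varepsilon_0 = \varepsilon_0$, for every $y \in B(y_0,\delta_0)$; thus $B(y_0,\delta_0) \subseteq G$ and $G$ has nonempty interior. Density of $G$ now follows from transitivity: given an arbitrary nonempty open $U \subseteq Y$, transitivity applied to the pair $U$, $B(y_0,\delta_0)$ furnishes $n \in \mathbb{N}$ with $U \cap T^{-n}(B(y_0,\delta_0)) \neq \emptyset$, and since $T^{-n}(B(y_0,\delta_0)) \subseteq T^{-n}(G) = G$, this yields $U \cap G \neq \emptyset$. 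As $U$ is arbitrary, $G$ is dense in $Y$, so $Y$ is $(T,\varphi)$-chaotic; applying Theorem~\ref{CV_mod_chaotic} afterwards then closes the implication $(b)\Rightarrow(a)$.

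The only point that requires care is that \eqref{eq:ineq} be available on the full ball $B(y_0,\delta_0)$ rather than on a dense subset of it, since transitivity only detects $T^{-n}$ of open sets; this is guaranteed because lower semi-continuity is asked at $y_0$ for the set-valued map on all of $Y$. Apart from that, the argument is just the interplay between the orbit-invariance $G = T^{-n}(G)$ and the transitivity hypothesis, and it uses neither compactness of $Y$ nor anything about $y_0 \in \mathcal{I}(T,\varphi)$ beyond the fact that $\varepsilon_0 > 0$.
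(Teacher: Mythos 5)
Your argument is correct and reaches the same claim by a genuinely different mechanism. The paper verifies Definition~\ref{defTphichaotic} by a \emph{push-forward}: given a nonempty open $U$, transitivity produces $k$ with $T^{-k}(U)\cap B(y_0,\delta_0)\neq\emptyset$, density of $\mathcal{Z}$ then yields a point $z_0\in T^{-k}(U)\cap B(y_0,\delta_0)\cap\mathcal{Z}$ to which \eqref{eq:ineq} applies, and the witness in $U$ is $T^{k}(z_0)$, taken from the dense set $\mathcal{E}=\bigcup_{z\in\mathcal{Z}}\mathcal{O}_T(z)$. You instead work with the $T$-invariant set $G=\{y\colon \diam W_\varphi(y)\geq\varepsilon_0/2\}$, show it contains the full ball $B(y_0,\delta_0)$, and \emph{pull back}: $T^{-n}(B(y_0,\delta_0))\subseteq T^{-n}(G)=G$, so transitivity makes $G$ dense. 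Your step $B(y_0,\delta_0)\subseteq G$ uses lower semi-continuity at $y_0$ as stated in Subsection~\ref{sse:svf} (the inclusion $W_\varphi(y_0)\subseteq W_\varphi(y)^{\varepsilon_0}$ for \emph{every} $y$ in a ball, not merely $y\in\mathcal{Z}$), which is indeed the definition the paper adopts and is consistent with the use of Fort's theorem in the $(a)\Rightarrow(b)$ direction; note that the paper's own display of \eqref{eq:ineq} restricts to $B(y_0,\delta_0)\cap\mathcal{Z}$, which is all its push-forward needs. What your route buys is a mild strengthening: you never use the density of $\mathcal{Z}$, only the existence of a single point $y_0\in\mathcal{I}(T,\varphi)$ at which $W_\varphi$ is lower semi-continuous; the cost is that you lean on the full-ball form of semi-continuity, so the argument would not survive the weaker reading in which only the restriction $W_\varphi|_{\mathcal{Z}}$ is assumed lower semi-continuous (the paper's push-forward would). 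Two cosmetic points: Definition~\ref{defTphichaotic} asks for the strict inequality $\diam(W_\Phi(a)\cup W_\Phi(b))>\varepsilon$, so take the chaotic constant to be, say, $\varepsilon_0/4$ (or observe that every point you actually produce is a preimage of a point of $B(y_0,\delta_0)$ and hence has $\diam W_\varphi\geq\varepsilon_0>\varepsilon_0/2$); and your identity $W_\varphi(Ty)=W_\varphi(y)$ is the same orbit-invariance the paper invokes without proof, so spelling it out is harmless.
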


\begin{proof} We will to show that there exist a dense subset $\mathcal{E}$ in $Y$ and $\varepsilon>0$ such that for any open set $U$ in $Y$ there exists $a, b \in U \cap \mathcal{E}$, where $a$ and $b$ may coincide, satisfying $\diam \big(W_{\varphi}(a) \cup W_{\varphi}(b)\big) > \varepsilon$.

Consider $\mathcal{E} = \bigcup_{z \,\in \,\mathcal{Z}} \mathcal{O}_T(z)$, which is dense in $Y$, and $\varepsilon = \varepsilon_0/2$. Let $U$ be an open set of $Y$. Since $T$ is transitive, there exists $k>0$ such that $T^{-k}(U) \cap B(y_0,\delta_0) \neq \emptyset$. Moreover, as $T^{-k}(U)$ is open and $\mathcal{Z}$ is dense in $Y$, there exists $z_0 \in T^{-k}(U) \cap B(y_0,\delta_0) \cap \mathcal{Z}$.
%Thus, $s \in S \cap  B(x,\delta)$  and $T^{k}(z) \in U$.
Such a $z_0$ belongs to $B(y_0,\delta_0) \cap \mathcal{Z}$, so \eqref{eq:ineq} yields
$$\diam W_{\varphi}(z_0) \, \geq \, \varepsilon_0.$$
Moreover, $W_{\varphi}(z_0) = W_{\varphi}(T^{k}(z_0))$, hence
%$\diam W_{\varphi}(z_0) = \diam W_{\varphi}\big(T^{k}(z_0)\big)$
$\diam W_{\varphi}(T^{k}(z_0)) \geq \varepsilon_0$ as well. Therefore, as $T^{k}(z_0) \in U \cap \mathcal{E}$, we may take $a = b = T^{k}(z_0)$ and thereby confirm that $Y$ is $(T,\varphi)$-chaotic.
\end{proof}

Since $Y$ is $(T,\varphi)$-chaotic, we may apply Theorem~\ref{CV_mod_chaotic} and thus conclude that the set ${\mathcal I}(T,\varphi)$ is Baire generic in $Y$. The proof of $(b) \Rightarrow (a)$ is complete.

%%%%%%%%%%%%%%%%%%%%%%%%%%%
\iffalse
Now, we show that  $(ii) \Rightarrow (i)$ and $(i)\Rightarrow (iv)$.

$(ii) \Rightarrow (i)$ Suppose that ${\mathcal I}(T,\varphi)$ is Baire generic. We have that $Y$ is Baire generic and ${\mathcal I}(T,\varphi)$ is Baire generic, so ${\mathcal I}(T,\varphi)$ is dense in $Y$. Since ${\mathcal I}(T,\varphi)$ is Baire generic in $Y$ and $Y$ is a Baire space, we get that ${\mathcal I}(T,\varphi)$ is a Baire space by \cite[Proposition 1.23]{HM77}.  We obtain $(i)$.

$(i) \Rightarrow (iv)$ Now, suppose that ${\mathcal I}(T,\varphi)$ is itself a Baire space and dense in $Y$. Using that ${\mathcal I}(T,\varphi)$ is a Baire space, by Corollary \ref{Cor_Psi}, there exists a Baire generic subset $S$ of ${\mathcal I}(T,\varphi)$ such that $\Psi_{\varphi}$ is continuous in $S$. In particular, $S$ is dense in ${\mathcal I}(T,\varphi)$, and then $S$ is dense in $Y$, because ${\mathcal I}(T,\varphi)$ is dense in $Y$. So,    $S=S \cap {\mathcal I}(T,\varphi) \neq \emptyset$ is dense in $Y$ and $\Psi_{\varphi}$ is lower semi-continuous in $S$. We proved that  $(i) \Rightarrow (iv)$.

\fi
%%%%%%%%%%%%%%%%%%%%%%%%%%%%

\subsection*{Acknowledgments}{LS was partially supported by FAPERJ - Aux\'ilio B\'asico \`a Pesquisa (APQ1), Project E-26/211.690/2021, FAPERJ - Jovem Cientista do Nosso Estado (JCNE) grant E-26/200.271/2023 and CAPES-Finance Code 001. MC was partially supported by CMUP, member of LASI, which is financed by national funds through FCT -- Funda\c c\~ao para a Ci\^encia e a Tecnologia, I.P., under the projects with references UIDB/00144/2020 and UIDP/00144/2020. MC also acknowledges financial support from the project PTDC/MAT-PUR/4048/2021.
%PV also benefits from the grant CEECIND/03721/2017 of the Stimulus of Scientific Employment, Individual Support 2017 Call, awarded by FCT.}

\def\cprime{$'$}
% \bibliographystyle{abbrv}
% \bibliography{../bibliobase/bibliography}

\end{document}